\newcommand{\mtx}[1]{\bm{\mathsf{#1}}}
\newcommand{\defeq}{\vcentcolon=}
\newcommand{\si}{(i)} 
\newcommand{\simo}{(i-1)} 
\newtheorem{thm}{Theorem}
\newtheorem{corollary}[thm]{Corollary}
\theoremstyle{definition}
\newtheorem{remark}{Remark}
\begin{document}

\begin{center}
\textbf{\large{Efficient algorithms for computing rank-revealing factorizations on a GPU}}

\vspace{5mm}


\textit{
Nathan~Heavner\footnote{nathan.d.heavner@gmail.com, Department of Applied Mathematics, University of Colorado at Boulder, USA},
Chao~Chen\footnote{chenchao.nk@gmail.com, Oden Institute, University of Texas at Austin, USA},
Abinand~Gopal\footnote{abinand.gopal@yale.edu, Department of Mathematics, Yale University, USA},
Per-Gunnar~Martinsson\footnote{pgm@oden.utexas.edu, Oden Institute \& Department of Mathematics, University of Texas at Austin, USA}
}

\vspace{5mm}

\begin{minipage}{140mm}
\textbf{Abstract:} Standard rank-revealing factorizations such as the singular value decomposition and column pivoted QR factorization are challenging to implement efficiently on a GPU. A major difficulty in this regard is the inability of standard algorithms to cast most operations in terms of the Level-3 BLAS. This paper presents two alternative algorithms for computing a rank-revealing factorization of the form $\mtx{A} = \mtx{U} \mtx{T} \mtx{V}^*$, where $\mtx{U}$ and $\mtx{V}$ are orthogonal and $\mtx{T}$ is trapezoidal (or triangular if $\mtx{A}$ is square). Both algorithms use randomized projection techniques to cast most of the flops in terms of matrix-matrix multiplication, which is exceptionally efficient on the GPU. Numerical experiments illustrate that these algorithms achieve significant acceleration over finely tuned GPU implementations of the SVD while providing low rank approximation errors close to that of the SVD.

\end{minipage}

\end{center}

{
\textbf{\textit{Keywords---}} Randomized numerical linear algebra; rank-revealing matrix factorization; parallel algorithm for GPU
}

\section{Introduction}

\subsection{Rank-revealing factorizations} \label{ss:rrf}

Given an $m \times n$ matrix $\mtx{A}$ with $m \ge n$, it is often desirable to compute a factorization of $\mtx{A}$ that uncovers some of its fundamental properties. {One such factorization, the \textit{rank-revealing UTV factorization}, is characterized as follows.} We say that a matrix factorization
\begin{equation*}
\begin{array}{ccccc}
\mtx{A} & = & \mtx{U} & \mtx{T} & \mtx{V}^*, \\
m \times n & & m \times m & m \times n & n \times n
\end{array}
\end{equation*}
is \textit{rank-revealing} if $\mtx{U}$ and $\mtx{V}$ are orthogonal matrices, $\mtx{T}$ is an upper trapezoidal matrix\footnote{{A  matrix $\mtx{A} \in \mathbb{R}^{m \times n}$ is trapezoidal if $\mtx{A}(i,j) = 0$ for all $i>j$ (upper trapezoidal) or all $i<j$ (lower trapezoidal).}}, and
for {all} $k$ such that $1 \leq k < n$,
it is the case that
%
%
\begin{equation} \label{e:rr}
e_k = \| \mtx{A} - \mtx{U}(:,1:k) \mtx{T}(1:k,:) \mtx{V}^* \| \approx \inf \{\|\mtx{A} - \mtx{B} \| : \mtx{B} \text{ has rank } k\},
\end{equation}
where $\mtx{B}$ is an arbitrary matrix of the same size as $\mtx{A}$, the norm is the spectral norm, $\mtx{U}(:,1:k)$ and $\mtx{T}(1:k,:)$ denote the first $k$ columns and the first $k$ rows of corresponding matrices, respectively (Matlab notation).
This informal definition is a slight generalization of the usual definitions of rank-revealing decompositions that appear in the literature, minor variations of which appear in, \textit{e.g.}~\cite{chandrasekaran1994rank,stewart1992updating,chan1987rank,chan1992some}. Rank-revealing factorizations are useful in solving problems such as least squares approximation \cite{chan1992some,chan1990computing,golub1965numerical,lawson1995solving,bjorck1996numerical,golub1980analysis}, rank estimation \cite{chan1987rank,stewart1984rank,stewart1998matrix}, subspace tracking \cite{bischof1992updating,stewart1992updating}, and low-rank approximation \cite{liberty2007randomized,2006_drineas_kannan_mahoney,kirsteins1994adaptive,clarkson2017low,bebendorf2003adaptive}, among others.
%

Perhaps the two most commonly known and used rank-revealing factorizations are the singular value decomposition (SVD) and the column pivoted QR decomposition (CPQR).\footnote{See Sections \ref{sec:ch5-svd} and \ref{sec:ch5-cpqr}, respectively, for a brief overview of these factorizations.} A singular value decomposition provides a theoretically optimal rank-revealing decomposition, in that the error $e_k$ in (\ref{e:rr}) is minimum.
The SVD has relatively high computational cost, however. The CPQR is less expensive, and also has the advantage that it builds the factorization incrementally, and can halt once a specified tolerance has been reached. This latter advantage is very valuable when working with matrices that are substantially rank-deficient. 
{The drawback of CPQR is that it is much worse than the SVD at revealing the numerical rank (see, e.g., theoretical error bounds in \cite[Section 3.2]{dong2021simpler}, and empirical results in Figures \ref{fig:error} and \ref{fig:relerror}).
For many practical applications, the error incurred is noticeably worse but usually acceptable. There exist pathological matrices for which CPQR leads to very suboptimal approximation errors~\cite{kahan1966numerical}, and specialized pivoting strategies to remedy it in some situations have been developed~\cite{chan1987rank,gu1996efficient}.}

A third choice is the rank-revealing UTV factorization (RRUTV) \cite{stewart1992updating,1994_stewart_UTV,martinsson2019randutv,1997_pchansen_low_rank_UTV,barlow2002modification}. An RRUTV can be thought of as a compromise between the SVD and CPQR that is better at revealing the numerical rank than the CPQR, and faster to compute than the SVD. Traditional algorithms for computing an RRUTV have been deterministic and guarantee revealing the rank of a matrix up to a user-defined tolerance. It is not used as widely as the aforementioned SVD and CPQR, though, except in a few settings such as subspace tracking.

\subsection{Challenges of implementing the SVD and the CPQR on a GPU}

As focus in high performance computing has shifted towards parallel environments, the use of GPUs to perform scientific computations has gained popularity and success \cite{owens2008gpu,luebke2008cuda,brodtkorb2010state}. The power of the GPU lies in its ability to execute many tasks in parallel extremely efficiently, and software tools have rapidly developed to allow developers to make full use of its capabilities. Algorithm design, however, is just as important. Classical algorithms for computing both the SVD and CPQR, still in use today, were designed with a heavier emphasis on reducing the number of floating point operations (flops) than on running efficiently on parallel systems. Thus, it is difficult for either factorization to maximally leverage the computing power of a GPU.


For CPQR, the limitations of the parallelism are well understood, at least relative to comparable matrix computations. The most popular algorithm for computing a CPQR uses Householder transformations and chooses the pivot columns by selecting the column with the largest norm. We will refer to this algorithm as \textsc{HQRCP}. See Section \ref{sec:ch5-cpqr} for a brief overview of \textsc{HQRCP}, or, \textit{e.g.}~\cite{businger1965linear,golub1965numerical} for a thorough description. {The process of selecting pivot columns inherently prevents full parallelization. In particular, \textsc{HQRCP} as written originally in \cite{golub1965numerical} uses no higher than Level-2 BLAS. Quintana-Ort\'{i} \textit{et al.}~developed \textsc{HQRCP} further in \cite{QRP:SIAM}, casting about half of the flops in terms of Level-3 BLAS kernels. Additional improvement in this area, though, is difficult to find for this algorithm.}
Given a sequence of matrix operations, it is well known that an appropriate implementation using Level-3 BLAS, or matrix-matrix, operations will run more efficiently on modern processors than an optimal implementation using Level-2 or Level-1 BLAS \cite{blackford2002updated}. This is largely due to the greater potential for the Level-3 BLAS to make more efficient use of memory caching in the processor.

The situation for the SVD is even more bleak. It is usually computed in two stages. The first is a reduction to bidiagonal form via, \textit{e.g.}~Householder reflectors. Only about half the flops in this computation can be cast in terms of the Level-3 BLAS, similarly (and for similar reasons) to \textsc{HQRCP}. The second stage is the computation of the SVD of the bidiagonal matrix. This is usually done with either an iterative algorithm (a variant of the QR algorithm) or a recursive algorithm (divide-and-conquer) which reverts to the QR algorithm at the base layer. See \cite{trefethen1997numerical,golub,cuppen1980divide,gu1995divide} for details. The recursive option inherently resists parallelization, and the current widely-used implementations of the QR approach are cast in terms of an operation that behaves like a Level-2 BLAS.\footnote{The QR algorithm can be cast in terms of Level-3 BLAS, but for reasons not discussed here, this approach has not yet been adopted in most software. See \cite{van2014restructuring} for details.} {Another well-known method for computing the SVD is the Jacobi's method~\cite{demmel1997applied,golub}, which can compute the tiny singular values and the corresponding singular vectors much more accurately for some matrices. But it is generally slower than the aforementioned methods.}

\subsection{Proposed algorithms}
\label{sec:ch5-proposed}

In this paper, we present two randomized algorithms for computing an RRUTV. Both algorithms are designed to run efficiently on GPUs in that the majority of their flops are cast in terms of matrix-matrix multiplication. We show through extensive numerical experiments in Section \ref{sec:ch5-num} that each reveals rank nearly as well as the SVD but often costs less than \textsc{HQRCP} to compute on a GPU. For matrices with uncertain or high rank, then, these algorithms warrant strong consideration for this computing environment.

The first algorithm \textsc{powerURV}, discussed in Section \ref{sec:ch5-powerURV}, was first introduced in the technical report \cite{gopal2018powerURV}. \textsc{powerURV} is built on another randomized RRUTV algorithm developed by Demmel \textit{et al.}~in \cite{demmel2007fast}, adding better rank revelation at a tolerable increase in computational cost. The algorithm itself is quite simple, capable of description with just a few lines of code. The simplicity of its implementation is a significant asset to developers, and it has just one input parameter, whose effect on the resulting calculation can easily be understood.

The second algorithm, \textsc{randUTV}, was first presented in \cite{martinsson2019randutv}. \textsc{randUTV} is a \textit{blocked} algorithm, meaning it operates largely inside a loop, ``processing'' multiple columns of the input matrix during each iteration. Specifically, for an input matrix $\mtx{A} \in \mathbb{R}^{m \times n}$ with $m \ge n$,\footnote{if $m < n$, we may simply factorize the transpose and take the transpose of the result.} a block size $b$ is chosen, and the bulk of \textsc{randUTV}'s work occurs in a loop of $s = \lceil n/b \rceil$ steps. During step $i$, orthogonal matrices $\mtx{U}^{\si}$ and $\mtx{V}^{\si}$ are computed which approximate singular vector subspaces of a relevant block of $\mtx{T}^{\simo}$. Then, $\mtx{T}^{\si}$ is formed with
\[
\mtx{T}^{\si} \defeq \left(\mtx{U}^{\si}\right)^* \mtx{T}^{\simo} \mtx{V}^{\si}.
\]
The leading $ib$ columns of $\mtx{T}^{\si}$ are upper  {trapezoidal} (see Figure \ref{fig:ch5-randutv-pattern} for an illustration of the sparsity pattern), so we say that \textsc{randUTV} drives $\mtx{A}$ to upper {trapezoidal} form $b$ columns at a time. After the final step in the loop, we obtain the final $\mtx{T}$, $\mtx{U}$, and $\mtx{V}$ factors with
\begin{align*}
\mtx{T} & \defeq \mtx{T}^{(s)}, \\
\mtx{U} & \defeq \mtx{U}^{(1)} \mtx{U}^{(2)} \cdots \mtx{U}^{(s)}, \\
\mtx{V} & \defeq \mtx{V}^{(1)} \mtx{V}^{(2)} \cdots \mtx{V}^{(s)}.
\end{align*}
See Section \ref{sec:ch5-randutvmod} for the full algorithm. A major strength of \textsc{randUTV} is that it may be adaptively stopped at any point in the computation, for instance when the singular value estimates on the diagonal of the $\mtx{T}^{\si}$ matrices drop below a certain threshold. If stopped early after $k \le \min(m,n)$ steps, the algorithm incurs only a cost of $\mathcal{O}(mnk)$ for an $m \times n$ input matrix. Each matrix $\mtx{U}^{\si}$ and $\mtx{V}^{\si}$ is computed using techniques similar to that of the randomized SVD \cite{halko2011finding}, which spends most of its flops in matrix multiplication and therefore makes efficient use of GPU capabilities.

In this paper, we propose several modifications to the original \textsc{randUTV} algorithm given in \cite{martinsson2019randutv}.
{
In particular, we add oversampling and orthonormalization to enhance the accuracy of the rank-revealing properties of the resulting RRUTV factorization. {These changes lead to additional computational cost on a CPU as observed in \cite{martinsson2019randutv}.} Here, we introduce an efficient algorithm to minimize the additional cost of oversampling and orthonormalization. The new algorithm takes advantage of the fact that matrix-matrix multiplication is far more efficient on a GPU than unpivoted QR, \textsc{randUTV}'s other building block.
}

{In summary, we present \textsc{powerURV} and \textsc{randUTV} for computing rank-revealing factorizations on a GPU. Both methods are much faster than the SVD. Compared to \textsc{HQRCP}, they are faster for sufficiently large matrices and much more accurate. As an example, Figure \ref{fig:ch1-times} shows the running time of the four methods on two Intel 18-core CPUs and an NVIDIA GPU.}

\begin{figure}[h]
\centering
\includegraphics[width=0.48\textwidth]{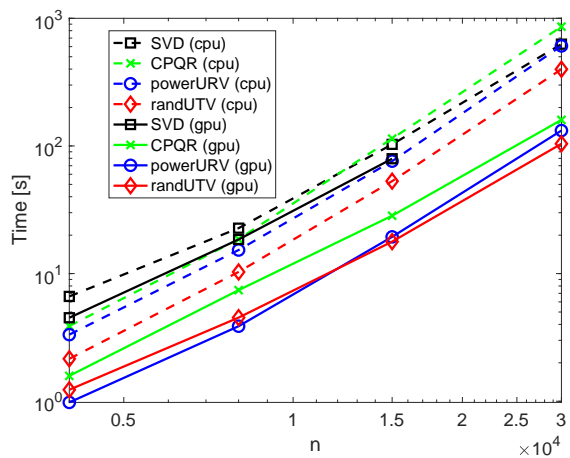}
\caption{Computation time of the SVD, \textsc{HQRCP}, \textsc{powerURV}, and \textsc{randUTV} for an $n \times n$ input matrix (SVD ran out of memory for the largest matrix on the GPU). CPU: two Intel Xeon Gold 6254 18-core CPUs at 3.10GHz; GPU: NVIDIA Tesla V100. {Results for SVD and \textsc{HQRCP} were obtained through the Intel MKL library on the CPU and the MAGMA library~\cite{tdb10,tnld10,dghklty14} on the GPU.}}
\label{fig:ch1-times}
\end{figure}

\begin{remark}
In this manuscript, we {assume that the input and output matrices reside in CPU main memory in our numerical experiments, and reported compute times \textit{include} the communication time for transferring data to and from the GPU. The storage complexity of all methods discussed is $O(n^2)$ for an $n \times n$ matrix.} We restrict our attention to the case where all the data used in the computation fits in RAM on the GPU, which somewhat limits the size of matrices that can be handled. For instance, in the numerical experiments reported in Section \ref{sec:ch5-num}, the largest problem size we could handle involved matrices of size about $30\,000 \times 30\,000$. The techniques can be modified to allow larger matrices to be handled and for multiple GPUs to be deployed, but we leave this extension for future work.
\end{remark}

\subsection{Outline of paper}

In Section \ref{sec:ch5-prel}, we survey previous work in rank-revealing factorizations, discussing competing methods as well as previous work in randomized linear algebra that is foundational for the methods presented in this article. Section \ref{sec:ch5-powerURV} presents the first algorithmic contribution of this article, \textsc{powerURV}. In Section \ref{sec:ch5-randutvmod}, we discuss and build on the recently developed \textsc{randUTV} algorithm, culminating in a modification of the algorithm \textsc{randUTV\_boosted} with greater potential for low rank estimation. Finally, Section \ref{sec:ch5-num} presents numerical experiments which demonstrate the computational efficiency of \textsc{powerURV} and \textsc{randUTV\_boosted} as well as their effectiveness in low rank estimation.

\section{Preliminaries}
\label{sec:ch5-prel}

\subsection{Basic notation}
\label{sec:ch5-notation}

In this manuscript, we write $\mtx{A} \in \mathbb{R}^{m \times n}$ to denote a real-valued matrix with $m$ rows and $n$ columns, and $\mtx{A}(i,j)$ refers to the element in the $i$-th row and $j$-th column of $\mtx{A}$. The indexing notation $\mtx{A}(i:j,k:l)$ is used to reference the submatrix of $\mtx{A}$ consisting of the entries in the $i$-th through $j$-th rows of the $k$-th through $l$-th columns. $\sigma_i(\mtx{A})$ is the $i$-th singular value of $\mtx{A}$, and $\mtx{A}^*$ is the transpose. The row and column spaces of $\mtx{A}$ are denoted as Row($\mtx{A}$) and Col($\mtx{A}$), respectively. An orthonormal matrix is a matrix whose columns have unit norm and are pairwise orthogonal, and an orthogonal matrix is a square orthonormal matrix. The default norm $\| \cdot \|$ is the spectral norm. If all the entires of a matrix $\mtx{G}  \in \mathbb{R}^{m \times n}$ are independent, identically distributed standard Gaussian variables, we call $\mtx{G}$ a standard Gaussian matrix, and we may denote it as $\mtx{G} = \textsc{randn}(m, n)$. $\epsilon_{\text{machine}}$ denotes the machine epsilon, say, $2.22 \times 10^{-16}$ in IEEE double precision

\subsection{The singular value decomposition (SVD)}
\label{sec:ch5-svd}


Given a matrix $\mtx{A} \in \mathbb{R}^{m \times n}$ and $r = \min(m,n)$, the (full) SVD of $\mtx{A}$ takes the form
\[
\begin{array}{ccccc}
\mtx{A} & = & \mtx{U}_{\text{opt}} & \mtx{\Sigma} & \mtx{V}_{\text{opt}}^*, \\
m \times n & & m \times m & m \times n & n \times n
\end{array}
\]
where $\mtx{U}_{\text{opt}}$ and $\mtx{V}_{\text{opt}}$ are orthogonal, and $\mtx{\Sigma}$ is (rectangular) diagonal. The diagonal elements $\{\sigma_{i}\}_{i=1}^{r}$ of $\mtx{\Sigma}$ are the \textit{singular values} of $\mtx{A}$ and satisfy $\sigma_1 \ge \sigma_2 \ge \ldots \ge \sigma_r \geq 0$. The columns $\mtx{u}_i$ and $\mtx{v}_i$ of $\mtx{U}_{\text{opt}}$ and $\mtx{V}_{\text{opt}}$ are called the \textit{left} and \textit{right singular vectors}, respectively, of $\mtx{A}$.
 In this article, we write $[\mtx{U}_{\text{opt}}, \mtx{\Sigma}, \mtx{V}_{\text{opt}}] =  \text{SVD}(\mtx{A})$ for computing the (full) SVD decomposition of $\mtx{A}$.
Importantly, the SVD provides theoretically optimal rank-$k$ approximations to $\mtx{A}$. Specifically, the Eckart-Young-Mirsky Theorem \cite{eckart1936approximation,mirsky1960symmetric} states that given the SVD of a matrix $\mtx{A}$ and a fixed $k \in \{1,2,\dots,\,r\}$, we have that
\[
\| \mtx{A} - \mtx{U}_{\text{opt}}(:,1:k) \mtx{\Sigma}(1:k,1:k) \mtx{V}_{\text{opt}}(:,1:k)^* \| = \inf \{ \| \mtx{A} - \mtx{B} \| : \mtx{B} \text{ has rank } k \},
\]
{and the relation also holds with respect to the Frobenius norm.}

The thin SVD of $\mtx{A}$ takes the form
\[
\begin{array}{ccccc}
\mtx{A} & = & \hat{\mtx{U}}_{\text{opt}} & \hat{\mtx{\Sigma}} & \hat{\mtx{V}}_{\text{opt}}^*, \\
m \times n & & m \times r & r \times r & r \times n
\end{array}
\]
where $\hat{\mtx{U}}_{\text{opt}}$ and $\hat{\mtx{V}}_{\text{opt}}$ are orthonormal, and $\hat{\mtx{\Sigma}}$ is diagonal containing the singular values.

\subsection{The column pivoted QR (CPQR) decomposition}
\label{sec:ch5-cpqr}

Given a matrix $\mtx{A} \in \mathbb{R}^{m \times n}$, the (full) CPQR decomposition of $\mtx{A}$ takes the form
\[
\begin{array}{ccccc}
\mtx{A} & = & \mtx{Q} & \mtx{R} & \mtx{P}^*, \\
m \times n & & m \times m & m \times n & n \times n
\end{array}
\]
where $\mtx{Q}$ is orthogonal, $\mtx{R}$ is trapezoidal, and $\mtx{P}$ is a permutation matrix. {There exists a number of algorithms for choosing the permutation,} but a general option, as implemented in LAPACK\footnote{\url{https://www.netlib.org/lapack/lug/node42.html}},  ensures monotonic decay in magnitude of the diagonal entries of $\mtx{R}$ so that $|\mtx{R}(1,1)| \ge |\mtx{R}(2,2)| \ge \ldots \ge |\mtx{R}(r,r)|$.
The details of the most popular algorithm for computing such a factorization, called \textsc{HQRCP} hereafter, are not essential to this article, but they may be explored by the reader in, \textit{e.g.}~\cite{chan1987rank,golub,trefethen1997numerical,stewart1998matrix}. In this article, we write $[\mtx{Q}, \mtx{R}, \mtx{P}] =  \textsc{HQRCP}(\mtx{A})$ for computing the CPQR decomposition of $\mtx{A}$.

{It is well known that \textsc{HQRCP} is not guaranteed to be   rank-revealing, and  it can fail by an exponentially large factor (on, \textit{e.g.} Kahan matrices)~\cite{gu1996efficient}.}
Such pathological cases are rare, particularly in practice, and \textsc{HQRCP} is so much faster than computing an SVD that \textsc{HQRCP} is used ubiquitously for low rank approximation. 
A communication avoiding variant based on ``tournament pivoting'' is given in~\cite{demmel2015communication}, with a related method for fully pivoted LU described in \cite{grigori2018low}.

\subsection{The unpivoted QR decomposition}
\label{sec:qr}

Given a matrix $\mtx{A} \in \mathbb{R}^{m \times n}$, the full (unpivoted) QR decomposition of $\mtx{A}$ takes the form
\[
\begin{array}{ccccc}
\mtx{A} & = & {\mtx{Q}} & {\mtx{R}} \\
m \times n & & m \times m & m \times n
\end{array}
\]
where ${\mtx{Q}}$ is orthogonal, and ${\mtx{R}}$ is trapezoidal. When $m \ge n$, the thin (unpivoted) QR decomposition of $\mtx{A}$ takes the form
\[
\begin{array}{ccccc}
\mtx{A} & = & \hat{\mtx{Q}} & \hat{\mtx{R}} \\
m \times n & & m \times n & n \times n
\end{array}
\]
where $\hat{\mtx{Q}}$ is orthonormal, and $\hat{\mtx{R}}$ is upper triangular. Unpivoted QR decompositions have no rank-revealing properties, but in this article we make critical use of the fact that {if $m \ge n$ and $\mtx{A}$ has full column rank,} then the columns of $\hat{\mtx{Q}}$ form an orthonormal basis for Col($\mtx{A}$).

The standard algorithm for computing an unpivoted QR factorization relies on Householder reflectors. We shall call this algorithm as \textsc{HQR} in this article and write $[{\mtx{Q}}, {\mtx{R}}] = \textsc{HQR\_full}(\mtx{A})$ or $[\hat{\mtx{Q}}, \hat{\mtx{R}}] = \textsc{HQR\_thin}(\mtx{A})$.  We refer the reader once again to textbooks such as \cite{golub,trefethen1997numerical,stewart1998matrix} for a complete discussion of the algorithm. The lack of the pivoting also allows \textsc{HQR} to rely more heavily on the level-3 BLAS than \textsc{HQRCP}, translating to better performance in parallel environments.

Of particular interest is the fact that the output orthogonal matrix ${\mtx{Q}}$ of the \textsc{HQR\_full} algorithm can be stored and applied efficiently even when $m \gg n$.
In \textsc{HQR\_full}, suppose that we have determined $n$ Householder matrices $\mtx{H}_1,\mtx{H}_2,\ldots, \mtx{H}_n \in \mathbb{R}^{m \times m}$ such that
$
\mtx{H}_n^* \mtx{H}_{n-1}^* \cdots \mtx{H}_1^* \mtx{A} = \mtx{R}.
$
We have that
$
\mtx{H}_i = \mtx{I} - 2 \mtx{y}_i \mtx{y}_i^*, \; 1 \le i \le n,
$
where $\mtx{y}_i  \in \mathbb{R}^{m \times 1}$ is the Householder vector associated with the transformation. Then the matrix $\mtx{Q} = \mtx{H}_1 \mtx{H}_2 \cdots \mtx{H}_n$ can be represented as
\[
\mtx{Q} = \mtx{I} - \mtx{Y} \mtx{T} \mtx{Y}^*,
\]
where $\mtx{T} \in \mathbb{R}^{n \times n}$ is upper triangular and $\mtx{Y} \in \mathbb{R}^{m \times n}$ is lower trapezoidal with columns containing the $\mtx{y}_i$ \cite{schreiber1989storage}. The form $\mtx{I} - \mtx{Y} \mtx{T} \mtx{Y}^*$ of $\mtx{Q}$ is called the \textit{compact-WY} representation of a product of Householder reflectors.

\begin{remark}\label{rmk:qr}
{Observe that the compact-WY form reduces the storage requirement of $\mtx{Q}$ from $\mathcal{O}(m^2)$ to $\mathcal{O}(mn + n^2)$ and that the \textsc{HQR\_full} algorithm requires $O(mn^2)$ work.} More importantly,   this representation  of $\mtx{Q}$  allows an {efficient} application of $\mtx{Q}$ using matrix-matrix multiplications, which is crucial for efficiently building factorizations on a GPU.
\end{remark}

{If matrix $\mtx{A}$ is rank-deficient, i.e., $\mbox{rank}(\mtx{A}) = k < \min(m, n)$, but the first $k$ columns in $\mtx{A}$ are linearly independent, then the \textsc{HQR} algorithm will detect this situation as $\mtx{R}(k+1:m, k+1:n) = \mtx{0}$ and $\hat{\mtx{R}}(k+1:n, k+1:n) = \mtx{0}$ in exact arithmetic.}

\subsection{The randomized range finder}
\label{sec:ch5-rsvd}

{Consider a matrix $\mtx{A} \in \mathbb{R}^{m \times n}$ and a block size $b$ such that $1 \le b \le \text{rank}(\mtx{A})$. The randomized range finder algorithm~\cite{halko2011finding} computes a set of $b$ orthonormal vectors that approximately span Row($\mtx{A}$) or Col($\mtx{A}$).} To be precise, suppose we seek to find an orthonormal matrix  $\hat{\mtx{Q}} \in \mathbb{R}^{n \times b}$ that
\begin{equation}
\| \mtx{A} - \mtx{A} \hat{\mtx{Q}} \hat{\mtx{Q}}^* \| \approx \inf \{ \| \mtx{A} - \mtx{B} \| : \mtx{B} \text{ has rank } b \}.
\end{equation}
In other words, the columns of $\hat{\mtx{Q}}$ approximately span the same space as the dominant $b$ right singular vectors of $\mtx{A}$. This task can be accomplished using randomized projections. An extremely simple way of building $\hat{\mtx{Q}}$ is the following:
\begin{enumerate}[1.]
\item Generate a standard Gaussian matrix $\mtx{G} \in \mathbb{R}^{m \times b}$.
\item Compute a ``sampling matrix'' $\mtx{Y} = \mtx{A}^* \mtx{G} \in \mathbb{R}^{n \times b}$.
\item Build an orthonormal basis of Col($\mtx{Y}$) via $[\hat{\mtx{Q}}, \sim] = \textsc{HQR\_thin}(\mtx{Y})$\footnote{{According to Theorem~\ref{th:rank} in Appendix~\ref{a:proof}, matrix $\mtx{Y}$ has full rank with probability 1.}}.
\end{enumerate}
This method will yield a reasonably good approximation {when the singular values of $\mtx{A}$ decay fast  (see theoretical error bounds in, \textit{e.g.}~\cite[Section 10]{halko2011finding})}. However, for certain matrices, particularly when the decay in singular values is slow, an improved approximation will be desired. We may improve the approximation provided by $\hat{\mtx{Q}}$ in two ways:
\begin{enumerate}[(i)]
    \item \textbf{Oversampling:} We may interpret $\mtx{Y}$ as $b$ different random projections onto Row($\mtx{A}$). As shown in \cite{halko2011finding}, the approximation of Col($\mtx{Y}$) to Row($\mtx{A}$) may be improved by gathering a few, say $p$, extra projections. In practice $p=5$ or $p=10$ is sufficient, adding very little additional computational cost {($O \big((mn+nb)p \big)$ extra work)} to the algorithm. Using a small amount of oversampling also improves the expected error bounds and vastly decreases the probability of deviating from those bounds. {For example, assume $b \ge 2$ and $p \ge4$ while $b+p \le \min\{m,n\}$, it holds that~\cite[Corollary 10.9]{halko2011finding}}
\[
\| \mtx{A} - \mtx{A} \hat{\mtx{Q}} \hat{\mtx{Q}}^* \|
\le \left(1+16 \sqrt{1+\frac{b}{p+1}} \right) \sigma_{b+1} + \frac{8\sqrt{k+p}}{p+1} \left(\sum_{j>b} \sigma_{j}^2 \right)^{1/2},
\]
    with failure probability at most $3e^{-p}$, where $\sigma_{1} \ge \sigma_{2} \ge \ldots$ are the singular values of $\mtx{A}$. Thus, this technique makes the uses of randomization in the algorithm safe and reliable.
\item \textbf{Power iteration:}  In the construction of $\mtx{Y}$, we may replace the matrix $\mtx{A}^*$ with $(\mtx{A}^* \mtx{A})^q \mtx{A}^*$ for $q \in \mathbb{N}$, {a matrix with the same column space as $\mtx{A}^*$} but whose singular values are $(\sigma_i(\mtx{A}))^{2q+1}$. The contributions of singular vectors that correspond to smaller singular values will be diminished using this new matrix,
{thus improving the approximation of Col($\mtx{Y}$) to the desired singular vector subspace.}
\footnote{{See~\cite{saibaba2019randomized} for bounds on principle angles between the two subspaces.}}  {Again, assume $b \ge 2$ and $p \ge4$ while $b+p \le \min\{m,n\}$, it holds that~\cite[Corollary 10.10]{halko2011finding}}
\[
\mathbb{E}\| \mtx{A} - \mtx{A} \hat{\mtx{Q}} \hat{\mtx{Q}}^* \|
\le C^{1/(2q+1)}  \sigma_{b+1},
\]
{where the constant $C$ depends on $k$, $p$, and $\min\{m,n\}$. In other words, the power scheme drives the extra constant factor to one exponentially fast. In practice, the above error bound can be too pessimistic,} and choosing $q$ to be $0$, $1$ or $2$ gives excellent results.  When using a power iteration scheme, numerical linear dependence of the samples becomes a concern since the information will be lost from singular vectors corresponding to singular values less than {$\epsilon_{\text{machine}}^{1/(2q+1)} \sigma_{\max}(\mtx{A}) $}. To stabilize the algorithm, we build $\mtx{Y}$ incrementally, orthonormalizing the columns of the intermediate matrix in between applications of $\mtx{A}^*$ and $\mtx{A}$.
\end{enumerate}

\section{The \textsc{powerURV} algorithm.}
\label{sec:ch5-powerURV}

Let $\mtx{A} \in \mathbb{R}^{m \times n}$ be a given matrix. Without loss of generality, we assume $m \ge n$; otherwise, we may operate on $\mtx{A}^*$ instead.
The \textsc{powerURV} algorithm is a slight variation of a randomized algorithm proposed by
Demmel \textit{et al.}~\cite{demmel2007fast} for  computing a
rank-revealing URV factorization of $\mtx{A}$. The modification we propose makes the
method more accurate in revealing the numerical rank, at only a marginal increase in
computational time in a GPU environment. In addition, \textsc{powerURV} is remarkably simple to implement and has a close connection to the randomized SVD (RSVD)~\cite{halko2011finding}.
%

\subsection{A randomized algorithm for computing a UTV factorization proposed by Demmel, Dumitriu, and Holtz}
\label{sec:ch5-rurv}

In \cite{demmel2007fast}, Demmel \textit{et al.}~give a randomized algorithm RURV (randomized URV) for computing a rank-revealing factorization of matrix $\mtx{A}$.
 The algorithm can be written quite simply as follows:
\begin{enumerate}[1.]
\item Generate a standard Gaussian matrix $\mtx{G} \in \mathbb{R}^{n \times n}$.
\item Build an orthonormal basis of Col($\mtx{G}$) via $[\mtx{V}, \sim] = \textsc{HQR\_full}(\mtx{G})$\footnote{According to Theorem~\ref{th:invert} in Appendix~\ref{a:proof}, $\mtx{G}$ is invertible.}, where $\mtx{V} \in \mathbb{R}^{n \times n}$.
\item Compute $\hat{\mtx{A}} = \mtx{A} \mtx{V} \in \mathbb{R}^{m \times n}$.
\item {Perform the full (unpivoted) QR factorization $[\mtx{U}, \mtx{R}] = \textsc{HQR\_full}(\hat{\mtx{A}})$,\footnote{{According to Corollary~\ref{th:rank2} in Appendix~\ref{a:proof}, we know that, with probability 1, $\mbox{rank}(\hat{\mtx{A}}) = \mbox{rank}({\mtx{A}})$ and the first $\mbox{rank}({\mtx{A}})$ columns in $\hat{\mtx{A}}$ are linearly independent.
}} where $\mtx{U} \in \mathbb{R}^{m \times m}$ is orthogonal and  $\mtx{R} \in \mathbb{R}^{m \times n}$ is upper trapezoidal.}
\end{enumerate}
Note that after step (4), we have
\[
\mtx{A} \mtx{V} = \mtx{U} \mtx{R}\qquad \Rightarrow\qquad \mtx{A} = \mtx{U} \mtx{R} \mtx{V}^*,
\]
a URV decomposition of $\mtx{A}.$ (See~\cite[Sec.~5]{demmel2007fast} for its rank-revealing properties.)
%
%
%
A key advantage of this algorithm is its simplicity. Since it relies only on unpivoted QR and matrix multiplication computations, it can easily be shown to be stable (see \cite{demmel2007fast} for the proof). Furthermore, both of these operations are relatively well-suited for parallel computing environments like GPUs. Since they are extremely common building blocks for problems involving matrices, highly optimized implementations for both are readily available.
Thus, a highly effective implementation of RURV may be quickly assembled by a non-expert.

Demmel \textit{et al.}~find a use for RURV as a fundamental component of a fast, stable solution to the eigenvalue problem. For this application, RURV is used iteratively, so not much is required of the accuracy of the rank-revealing aspect of the resulting factorization. For other problems  such as low-rank approximations, though,
{the error $e_k$ in (\ref{e:rr}) can be quite large (compared to, e.g., the \textsc{powerURV} algorithm that we are going to introduce; for interested readers, see~\cite[Figure 3]{gopal2018powerURV} for a comparison of errors between RURV and \textsc{powerURV}).}
%
This is because the matrix $\mtx{V}$ does not incorporate any information from the row space of the input $\mtx{A}$.

\subsection{\textsc{powerURV}: A randomized algorithm enhanced by power iterations}
\label{sec:ch5-powerURValg}

The \textsc{powerURV} algorithm is inspired by the RURV of Section \ref{sec:ch5-rurv} combined with the observation that the optimal matrix $\mtx{V} \in \mathbb{R}^{n \times n}$ to use for rank-revealing purposes (minimizing the error $e_k$ in (\ref{e:rr})) is the matrix whose columns are the right singular vectors of the input matrix $\mtx{A} \in \mathbb{R}^{m \times n}$. If such a $\mtx{V}$ were used, then the columns of $\hat{\mtx{A}} = \mtx{A} \mtx{V}$ would be the left singular vectors of $\mtx{A}$ scaled by its singular values. Thus finding the right singular vectors of $\mtx{A}$ yields theoretically optimal low rank approximations. This subproblem is as difficult as computing the SVD of $\mtx{A}$, though, so we settle for choosing $\mtx{V}$ as an efficiently computed approximation to the right singular vectors of $\mtx{A}.$

{Specifically, we compute an approximation to the row space of $\mtx{A}$ using a variant of the randomized range finder algorithm in Section \ref{sec:ch5-rsvd}.} The computation of $\mtx{V}$ consists of three steps:
\begin{enumerate}[1.]
\item Generate a standard Gaussian matrix  $\mtx{G} \in \mathbb{R}^{n \times n}$.
\item Compute a ``sampling matrix'' $\mtx{Y} = (\mtx{A}^* \mtx{A})^q \mtx{G} \in \mathbb{R}^{n \times n}$, where $q$ is a small positive integer.
\item Build an orthonormal basis of Col($\mtx{Y}$) via $[\mtx{V} , \sim] = \textsc{HQR\_full}(\mtx{Y})$.\footnote{{According to Theorem~\ref{th:rank3} and Corollary~\ref{th:rank2} in Appendix~\ref{a:proof},
we know that, with probability 1,  $\mbox{rank}(\mtx{Y}) = \mbox{rank}(\mtx{A})$, and the first $\mbox{rank}(\mtx{A})$ columns in ${\mtx{Y}}$ are linearly independent.
}}
\end{enumerate}

The matrix $\mtx{Y}$ can be thought of as a random projection onto the row space of $\mtx{A}$. Specifically, the columns of $\mtx{Y}$ are random linear combinations of the columns of $(\mtx{A}^*\mtx{A})^q$. {The larger the value of $q$, the faster the singular values of the sampled matrix $(\mtx{A}^*\mtx{A})^q$ decay. Therefore, choosing $q$ to be large makes $\mtx{V}$ better aligned with the right singular vectors. It also increases the number of matrix multiplications required, but they execute efficiently on a GPU.} 

Unfortunately, a naive evaluation of $\mtx{Y} = (\mtx{A}^* \mtx{A})^q \mtx{G}$ is {sensitive to the effects of roundoff errors.
In particular, the columns of $\mtx{Y}$ will lose the information provided by singular vectors with corresponding singular values smaller than {$\epsilon_{\text{machine}}^{1/2q} \sigma_{\max}(\mtx{A})$}. This problem can be remedied by orthonormalizing the columns of the {intermediate} matrices in between each application of $\mtx{A}$ and $\mtx{A}^*$, employing $2q-1$ extra unpivoted QR factorizations. The complete instruction set for \textsc{powerURV} is given in Algorithm \ref{alg:powerURV-stable}.
%

 \begin{algorithm}
 \caption{[\mtx{U},\mtx{R},\mtx{V}] = \textsc{powerURV}($\mtx{A},q$)}
 \label{alg:powerURV-stable}
 \begin{algorithmic}[1]
 \REQUIRE {matrix $\mtx{A} \in \mathbb{R}^{m \times n} (m \ge n)$ and a non-negative integer $q$ (if $q=0$, this algorithm becomes the RURV.)}
 \ENSURE {$\mtx{A} = \mtx{U} \mtx{R} \mtx{V}^*$, where $\mtx{U} \in \mathbb{R}^{m \times m}$  and $\mtx{V} \in \mathbb{R}^{n \times n}$ are orthogonal, and $\mtx{R} \in \mathbb{R}^{m \times n}$ is upper trapezoidal.}
 \STATE {$\mtx{V} = \textsc{randn}(n,n)$}
 \FOR{$i=1$: $q$}
 \STATE $\hat{\mtx{Y}} = \mtx{A} \mtx{V}$
 \hfill  $\hat{\mtx{Y}} \in \mathbb{R}^{m \times n}$
 \STATE $[\hat{\mtx{V}},\sim] = \textsc{HQR\_thin}(\hat{\mtx{Y}})$
 \hfill  $\hat{\mtx{V}} \in \mathbb{R}^{m \times n}$
 \STATE $\mtx{Y} = \mtx{A}^* \hat{\mtx{V}}$
 \hfill  ${\mtx{Y}} \in \mathbb{R}^{n \times n}$
 \STATE $[\mtx{V},\sim] = \textsc{HQR\_thin}(\mtx{Y})$
 \hfill  ${\mtx{V}} \in \mathbb{R}^{n \times n}$
 \ENDFOR
 \STATE $\hat{\mtx{A}} = \mtx{A} \mtx{V}$
 \hfill  $\hat{\mtx{A}} \in \mathbb{R}^{m \times n}$
 \STATE $[\mtx{U},\mtx{R}] = \textsc{HQR\_full}(\hat{\mtx{A}})$ \\
 \hfill \footnotesize{Remark: According to theorems in Appendix~\ref{a:proof}, we know that, with probability 1, $\mbox{rank}(\hat{\mtx{Y}}) = \mbox{rank}({\mtx{Y}}) = \mbox{rank}(\hat{\mtx{A}}) = \mbox{rank}(\mtx{A})$, and the first $\mbox{rank}(\mtx{A})$ columns in these matrices are linearly independent, so the \textsc{HQR} algorithm can be applied.}
 \end{algorithmic}
 \end{algorithm}

\subsection{Relationship with RSVD}
\label{sec:ch5-rsvd-vs-powerURV}
The \textsc{powerURV} algorithm is closely connected to the standard randomized
singular value decomposition algorithm (RSVD).
The equivalency between RSVD and \textsc{powerURV} allows for much of the
theory for analyzing the RSVD in \cite{halko2011finding,saibaba2019randomized} to directly apply to the \textsc{powerURV}
algorithm.

Let $\mtx{A} \in \mathbb{R}^{m \times n}$ with $m \geq n$ and $\mtx{G} \in \mathbb{R}^{n \times n}$ be a standard Gaussian matrix. Let $\mtx{G}_{\rm rsvd} = \mtx{G}(:,1:\ell)$, where $\ell \le \min(n, \mbox{rank}(\mtx{A}))$ is a positive integer. Let $q$ be a {non-negative} integer.


\paragraph{\textbf{\textsc{powerURV}}}
Recall that \textsc{powerURV} has two steps. First, compute the full (unpivoted) QR factorization of $(\mtx{A}^* \mtx{A})^q \mtx{G}$:
\begin{equation}
(\mtx{A}^* \mtx{A})^q \mtx{G} = \mtx{V} \mtx{S},
\label{eq:powerurv1}
\end{equation}
where $\mtx{V} \in \mathbb{R}^{n \times n}$ is orthogonal and $\mtx{S} \in \mathbb{R}^{n \times n}$ is upper triangular.
Second, compute the full (unpivoted) QR factorization of $\mtx{A} \mtx{V}$:
\begin{equation}
\mtx{A} \mtx{V} = \mtx{U} \mtx{R},
\label{eq:powerurv2}
\end{equation}
where $\mtx{U} \in \mathbb{R}^{m \times m}$ is orthogonal and $\mtx{R} \in \mathbb{R}^{m \times n}$ is upper trapezoidal.

\paragraph{\textbf{RSVD}}
The RSVD builds an
approximation to a truncated SVD via the following steps. First, evaluate
\begin{equation}
\label{eq:rsvd1}
  \mtx{Y}_{\rm rsvd} = (\mtx{A} \mtx{A}^*)^q \mtx{A} \mtx{G}_{\rm rsvd} \in \mathbb{R}^{m \times \ell}.
\end{equation}
The columns of $\mtx{Y}_{\rm rsvd}$ are orthonormalized via a thin (unpivoted) QR factorization\footnote{{It is easy to show that $\mbox{rank}((\mtx{A} \mtx{A}^*)^q \mtx{A}) = \mbox{rank}(\mtx{A}) \ge \ell$, so $\mtx{Y}_{\rm rsvd}$ has full rank with probability 1 according to Theorem~\ref{th:rank} in Appendix~\ref{a:proof}.}}
\begin{equation}
\label{eq:rsvd2}
\mtx{Y}_{\rm rsvd} = \mtx{Q}_{\rm rsvd} \mtx{R}_{\rm rsvd},
\end{equation}
where $\mtx{Q}_{\rm rsvd} \in \mathbb{R}^{m \times \ell}$ is orthonormal, and $\mtx{R}_{\rm rsvd} \in \mathbb{R}^{\ell \times \ell}$ is upper triangular and invertible.
Next, the thin SVD of $\mtx{Q}_{\rm rsvd}^* \mtx{A}$ is computed to obtain
\begin{equation}
\label{eq:rsvd3}
\mtx{Q}_{\rm rsvd}^* \mtx{A} = \mtx{W}_{\rm rsvd}\mtx{\Sigma}_{\rm rsvd}(\mtx{V}_{\rm rsvd})^*,
\end{equation}
where $\mtx{W}_{\rm rsvd} \in \mathbb{R}^{\ell \times \ell}$ is orthogonal, $\mtx{V}_{\rm rsvd} \in \mathbb{R}^{n \times \ell}$ is orthonormal
and $\mtx{\Sigma}_{\rm rsvd} \in \mathbb{R}^{\ell \times \ell}$ is
diagonal with singular values. We know all the singular values are positive because the columns of $\mtx{Q}_{\rm rsvd}$ lie in Col$(\mtx{A})$. The final step is to define the $m\times \ell$ matrix
\begin{equation}
\label{eq:rsvd4}
\mtx{U}_{\rm rsvd} = \mtx{Q}_{\rm rsvd} \mtx{W}_{\rm rsvd}.
\end{equation}
The end result is an approximate SVD:
$$
\mtx{A} \approx \mtx{U}_{\rm rsvd}\mtx{\Sigma}_{\rm rsvd}\mtx{V}_{\rm rsvd}^{*}.
$$

The key claim in this section is the following theorem:

\begin{thm} \label{th:rsvd}
Given two matrices $\mtx{A}$, $\mtx{G}$, and two integers $\ell$, $q$, as described at the beginning of Section~\ref{sec:ch5-rsvd-vs-powerURV}, it holds that
$$
\mtx{U}(:,1:\ell)\mtx{U}(:,1:\ell)^{*}\mtx{A} =
\mtx{U}_{\rm rsvd}\mtx{U}_{\rm rsvd}^{*}\mtx{A},
$$
where the two matrices $\mtx{U}$ and $\mtx{U}_{\rm rsvd}$ are computed by the \textsc{powerURV} and the RSVD, respectively, {in exact arithmetic}.
\end{thm}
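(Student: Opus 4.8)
\emph{Proof proposal.} The plan is to prove the stronger statement that the two orthogonal projectors agree, $\mtx{U}(:,1:\ell)\mtx{U}(:,1:\ell)^{*} = \mtx{U}_{\rm rsvd}\mtx{U}_{\rm rsvd}^{*}$, and then right-multiply by $\mtx{A}$. As a first step I would remove the factor $\mtx{W}_{\rm rsvd}$: it is $\ell\times\ell$ orthogonal, so (\ref{eq:rsvd4}) gives $\mtx{U}_{\rm rsvd}\mtx{U}_{\rm rsvd}^{*} = \mtx{Q}_{\rm rsvd}\mtx{W}_{\rm rsvd}\mtx{W}_{\rm rsvd}^{*}\mtx{Q}_{\rm rsvd}^{*} = \mtx{Q}_{\rm rsvd}\mtx{Q}_{\rm rsvd}^{*}$, the orthogonal projector onto $\mathrm{Col}(\mtx{Y}_{\rm rsvd})$. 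Similarly $\mtx{U}(:,1:\ell)\mtx{U}(:,1:\ell)^{*}$ is the orthogonal projector onto $\mathrm{Col}(\mtx{U}(:,1:\ell))$. Hence it is enough to show $\mathrm{Col}(\mtx{Y}_{\rm rsvd}) = \mathrm{Col}(\mtx{U}(:,1:\ell))$.

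The link between the two algorithms is the commuting identity $(\mtx{A}\mtx{A}^{*})^{q}\mtx{A} = \mtx{A}(\mtx{A}^{*}\mtx{A})^{q}$, which turns (\ref{eq:rsvd1}) into $\mtx{Y}_{\rm rsvd} = \mtx{A}(\mtx{A}^{*}\mtx{A})^{q}\mtx{G}(:,1:\ell)$. Now I would propagate the first $\ell$ columns through the two QR factorizations, using triangularity. From (\ref{eq:powerurv1}), $(\mtx{A}^{*}\mtx{A})^{q}\mtx{G}(:,1:\ell) = \mtx{V}\,\mtx{S}(:,1:\ell)$; since $\mtx{S}$ is upper triangular, $\mtx{S}(:,1:\ell)$ vanishes outside its first $\ell$ rows, so this equals $\mtx{V}(:,1:\ell)\,\mtx{S}(1:\ell,1:\ell)$. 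The leading block $\mtx{S}(1:\ell,1:\ell)$ is invertible: with probability $1$ the first $\mathrm{rank}(\mtx{A})$ columns of $(\mtx{A}^{*}\mtx{A})^{q}\mtx{G}$ are linearly independent (Theorem~\ref{th:rank3} and Corollary~\ref{th:rank2}), so the $\mathrm{rank}(\mtx{A})\times\mathrm{rank}(\mtx{A})$ leading block of $\mtx{S}$ is invertible, and since $\ell\le\mathrm{rank}(\mtx{A})$ so is its $\ell\times\ell$ leading block. Therefore $\mtx{Y}_{\rm rsvd} = \mtx{A}\mtx{V}(:,1:\ell)\,\mtx{S}(1:\ell,1:\ell)$ and $\mathrm{Col}(\mtx{Y}_{\rm rsvd}) = \mathrm{Col}\big(\mtx{A}\mtx{V}(:,1:\ell)\big)$. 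Repeating the argument on (\ref{eq:powerurv2}): $\mtx{A}\mtx{V}(:,1:\ell) = \mtx{U}\,\mtx{R}(:,1:\ell) = \mtx{U}(:,1:\ell)\,\mtx{R}(1:\ell,1:\ell)$ because $\mtx{R}$ is upper trapezoidal, and $\mtx{R}(1:\ell,1:\ell)$ is invertible because with probability $1$ the first $\mathrm{rank}(\mtx{A})$ columns of $\mtx{A}\mtx{V}$ are linearly independent. Hence $\mathrm{Col}\big(\mtx{A}\mtx{V}(:,1:\ell)\big) = \mathrm{Col}(\mtx{U}(:,1:\ell))$, and chaining the equalities gives $\mathrm{Col}(\mtx{Y}_{\rm rsvd}) = \mathrm{Col}(\mtx{U}(:,1:\ell))$, as needed.

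Almost every step here is a short manipulation; the one spot that deserves care is the invertibility of the $\ell\times\ell$ leading blocks of $\mtx{S}$ and $\mtx{R}$, which is precisely where the hypothesis $\ell\le\mathrm{rank}(\mtx{A})$ together with the probability-$1$ rank and linear-independence statements of Appendix~\ref{a:proof} enter. It is worth remarking that the proof in fact shows the full projectors coincide (the trailing $\mtx{A}$ is not needed) and that $\mtx{U}_{\rm rsvd}\mtx{U}_{\rm rsvd}^{*} = \mtx{Q}_{\rm rsvd}\mtx{Q}_{\rm rsvd}^{*}$, so the singular-vector factor $\mtx{W}_{\rm rsvd}$ is irrelevant --- reflecting that \textsc{powerURV}'s $\mtx{U}$ is only an orthonormal basis for the relevant subspace rather than a matrix of singular vectors.
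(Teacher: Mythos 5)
Your proof is correct and follows essentially the same route as the paper's: both restrict the QR factorizations (\ref{eq:powerurv1}) and (\ref{eq:powerurv2}) to the leading $\ell$ columns, use the commuting identity $(\mtx{A}\mtx{A}^*)^q\mtx{A}=\mtx{A}(\mtx{A}^*\mtx{A})^q$, and conclude $\mathrm{Col}(\mtx{U}(:,1:\ell))=\mathrm{Col}(\mtx{U}_{\rm rsvd})$ from invertibility of the leading triangular blocks. Your version is marginally tidier in that by discarding $\mtx{W}_{\rm rsvd}$ at the outset via the projector identity $\mtx{U}_{\rm rsvd}\mtx{U}_{\rm rsvd}^*=\mtx{Q}_{\rm rsvd}\mtx{Q}_{\rm rsvd}^*$, you avoid having to invoke invertibility of $\mtx{R}_{\rm rsvd}$ and $\mtx{W}_{\rm rsvd}$ (the paper cites all four), and you spell out why $\ell\le\mathrm{rank}(\mtx{A})$ guarantees the leading blocks of $\mtx{S}$ and $\mtx{R}$ are nonsingular, a point the paper leaves implicit.
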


\begin{proof}

We will prove that $\mbox{Col}(\mtx{U}(:,1:\ell)) = \mbox{Col}(\mtx{U}_{\rm rsvd})$, which
immediately implies that the two projectors
$\mtx{U}(:,1:\ell)\mtx{U}(:,1:\ell)^{*}$ and
$\mtx{U}_{\rm rsvd}\mtx{U}_{\rm rsvd}^{*}$ are identical. Let us first
observe that restricting (\ref{eq:powerurv1}) to the first $\ell$ columns, we obtain
\begin{equation}
\label{eq:swim0}
(\mtx{A}^{*}\mtx{A})^{q}\mtx{G}_{\rm rsvd} =
\mtx{V}(:,1:\ell) \mtx{S} (1:\ell,1:\ell).
\end{equation}
We can then connect $\mtx{Y}_{\rm rsvd}$ and $\mtx{U}(:,1:\ell)$ via a simple computation
\begin{equation}
\label{eq:swim1}
\begin{aligned}
\mtx{Y}_{\rm rsvd} & \stackrel{(\ref{eq:rsvd1})}{=}
(\mtx{A}\mtx{A}^*)^{q} \mtx{A} \mtx{G}_{\rm rsvd} \\ & =
\mtx{A}(\mtx{A}^*\mtx{A})^q \mtx{G}_{\rm rsvd} \\ & \stackrel{(\ref{eq:swim0})}{=}
\mtx{A}\mtx{V}(:,1:\ell) \mtx{S} (1:\ell,1:\ell) \\ & \stackrel{(\ref{eq:powerurv2})}{=}
\mtx{U}(:,1:\ell)\mtx{R}(1:\ell,1:\ell) \mtx{S} (1:\ell,1:\ell).
\end{aligned}
\end{equation}
Next we link $\mtx{U}_{\rm rsvd}$ and $\mtx{Y}_{\rm rsvd}$ via
\begin{equation}
\label{eq:swim2}
\mtx{U}_{\rm rsvd} \stackrel{(\ref{eq:rsvd4})}{=}
\mtx{Q}_{\rm rsvd} \mtx{W}_{\rm rsvd} \stackrel{(\ref{eq:rsvd2})}{=}
\mtx{Y}_{\rm rsvd}\mtx{R}_{\rm rsvd}^{-1}\mtx{W}_{\rm rsvd}.
\end{equation}
Combining (\ref{eq:swim1}) and (\ref{eq:swim2}), we find that
\begin{equation}
\label{eq:swim3}
\mtx{U}_{\rm rsvd} =
\mtx{U}(:,1:\ell)\mtx{R}(1:\ell,1:\ell) \mtx{S} (1:\ell,1:\ell)
\mtx{R}_{\rm rsvd}^{-1}\mtx{W}_{\rm rsvd}.
\end{equation}
We know that, with probability 1, matrices
$\mtx{R}(1:\ell,1:\ell), \mtx{S} (1:\ell,1:\ell), \mtx{R}_{\rm rsvd}$ and $\mtx{W}_{\rm rsvd}$
are invertible, which establishes that $\mbox{Col}(\mtx{U}(:,1:\ell)) = \mbox{Col}(\mtx{U}_{\rm rsvd})$.
\end{proof}

It is important to note that while there is a close connection between RSVD and \textsc{powerURV},
the RSVD is able to attain substantially higher overall accuracy than \textsc{powerURV}. {Notice that the RSVD requires
$2q+2$ applications of either $\mtx{A}$ or $\mtx{A}^{*}$, while \textsc{powerURV} requires only $2q+1$. The RSVD takes advantage of one additional application of $\mtx{A}$ in (\ref{eq:rsvd3}), where matrix $\mtx{W}_{\rm rsvd}$ rearranges the columns inside $\mtx{Q}_{\rm rsvd}$ to make the leading columns much better aligned with the corresponding singular vectors.}
%
Another perspective to see it is that
the columns of $\mtx{V}_{\rm rsvd}$ end up being a far more accurate basis for
Row($\mtx{A}$) than the columns of the matrix $\mtx{V}$ resulting from the \textsc{powerURV}. This is of course
expected since for $q=0$, the matrix $\mtx{V}$ incorporates no information from $\mtx{A}$ at all.

\begin{remark}
Theorem~\ref{th:rsvd} extends to the case when $\ell > \mbox{rank}(\mtx{A})$. The same proof will apply for a modified
$\ell' = \mbox{rank}(\mtx{A})$, and it is easy to see that adding additional columns to the basis matrices will make no difference since in this case $\mtx{U}(:,1:\ell)\mtx{U}(:,1:\ell)^{*}\mtx{A} = \mtx{U}_{\rm rsvd}\mtx{U}_{\rm rsvd}^{*}\mtx{A} = \mtx{A}$.
\end{remark}

\begin{remark}
The RSVD requires an estimate of the numerical rank in order to compute a \emph{partial} factorization. By contrast, the other methods discussed in this paper, which include the SVD, the column-pivoted QR (CPQR), and the two new methods (powerURV and randUTV), compute \emph{full} factorizations \emph{without} any a priori information about the numerical rank.
\end{remark}

\section{The \textsc{randUTV} algorithm.}
\label{sec:ch5-randutvmod}

{In this section, we describe the \textsc{randUTV} algorithm.
Throughout this section, $\mtx{A} \in \mathbb{R}^{m \times n}$ is the input matrix. Without loss of generality, we assume $m \ge n$ (if $m < n$, we may operate on $\mtx{A}^*$ instead). {A factorization}
\[
\begin{array}{ccccc}
\mtx{A} & = & \mtx{U} & \mtx{T} & \mtx{V}^*. \\
m \times n & & m \times m & m \times n & n \times n
\end{array}
\]
{where $\mtx{U}$ and $\mtx{V}$ are orthogonal and $\mtx{T}$ is upper trapezoidal is called a \textit{UTV factorization}. Note that the SVD and CPQR are technically two examples of UTV factorizations. In the literature, however, a decomposition is generally given the UTV designation only if that is its most specific label; we will continue this practice in the current article. Thus, it is implied that $\mtx{T}$ is upper trapezoidal but not diagonal, and $\mtx{V}$ is orthogonal but not a permutation.}
{The flexibility of the factors of a UTV decomposition allow it to act as a compromise between the SVD and CPQR in that it is not too expensive to compute but can reveal rank quite well. A UTV factorization can also be updated and downdated easily; see \cite[Ch. 5, Sec. 4]{stewart1998matrix} and \cite{fierro1999utv,park1995downdating,stewart1992updating} for details.}

\textsc{randUTV} is a blocked algorithm for computing a rank-revealing UTV decomposition of a matrix. It is tailored to run particularly efficiently on parallel architectures due to the fact that the bulk of its flops are cast in terms of matrix-matrix multiplication. The block size $b$ is a user-defined parameter which in practice is usually a number such as $128$ or $256$ {(multiples of the tile size used in libraries such as cuBLAS)}. 
In Section \ref{sec:ch5-randutvalg}, we review the original \textsc{randUTV} algorithm in \cite{martinsson2019randutv}. Then, Section \ref{sec:ch5-oversampling}  presents methods of modifying \textsc{randUTV} to enhance the rank-revealing properties of the resulting factorization. Finally, in Section \ref{sec:ch5-reducecost} we combine these methods with a bootstrapping technique to derive an efficient algorithm on a GPU.

\subsection{The \textsc{randUTV} algorithm for computing a UTV decomposition}
\label{sec:ch5-randutvalg}

The algorithm performs the bulk of its work in a loop that executes  $s= \lceil n/b \rceil$ iterations. We start with $\mtx{T}^{(0)} \defeq \mtx{A}$. In the $i$-th iteration ($i=1,2,\ldots,s$), a matrix $\mtx{T}^{\si} \in \mathbb{R}^{m \times n}$ is formed by the computation
\begin{equation}
\mtx{T}^{\si} \defeq \left(\mtx{U}^{\si}\right)^* \mtx{T}^{\simo} \mtx{V}^{\si},
\end{equation}
where $\mtx{U}^{\si} \in \mathbb{R}^{m \times m}$ and $\mtx{V}^{\si} \in \mathbb{R}^{n \times n}$ are orthogonal matrices chosen to {ensure that $T^{(i)}$ satisfies} the sparsity (nonzero pattern) and rank-revealing properties of the final factorization. Consider the first $s-1$ steps. Similar to other blocked factorization algorithms, the $i$-th step is meant to ``process'' a set of $b$ columns of $\mtx{T}^{\simo}$, so that after step $i$, $\mtx{T}^{\si}$ satisfies the following sparsity requirements:
\begin{itemize}
\item $\mtx{T}^{\si}(:,1:ib)$ is upper trapezoidal, and
\item the $b \times b$ blocks on the main diagonal of $\mtx{T}^{\si}(:,1:ib)$ are themselves diagonal.
\end{itemize}
After $s-1$ iterations, we compute the SVD of the bottom right block $\mtx{T}^{(s-1)}\left(((s-1)b+1):m, ((s-1)b+1):n\right)$ to obtain $\mtx{U}^{(s)}$ and $\mtx{V}^{(s)}$.}
The sparsity pattern followed by the $\mtx{T}^{\si}$ matrices is demonstrated in Figure \ref{fig:ch5-randutv-pattern}.
\begin{figure}
\begin{center}
\begin{tabular}{cccc}
\includegraphics[width=.145\textwidth]{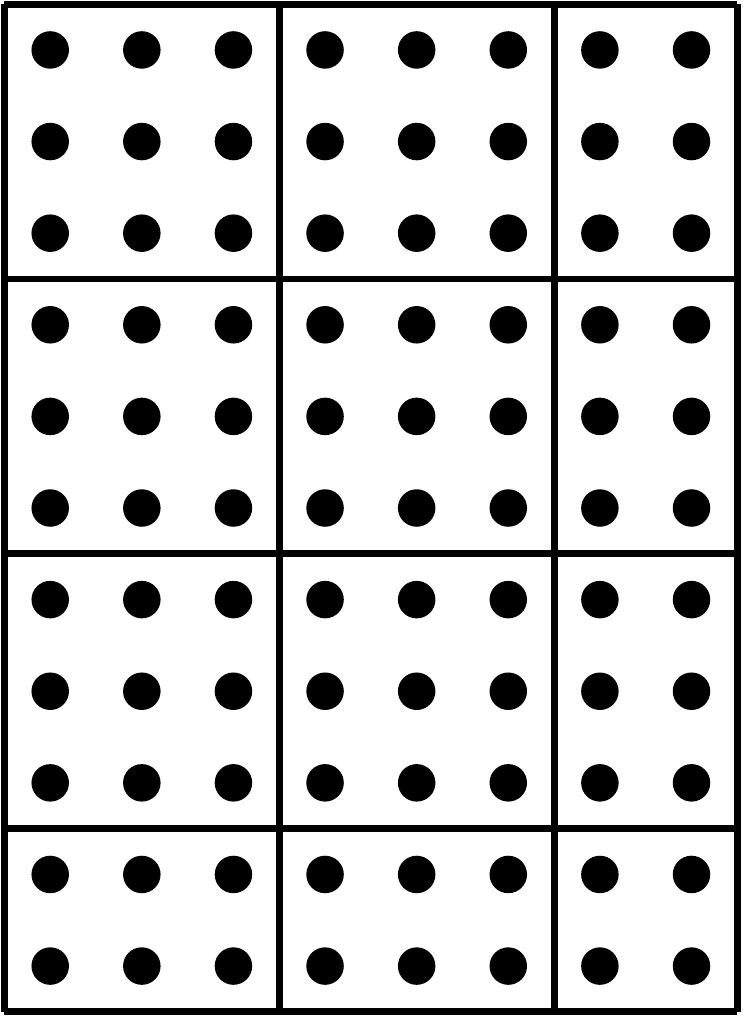}
&
\includegraphics[width=.145\textwidth]{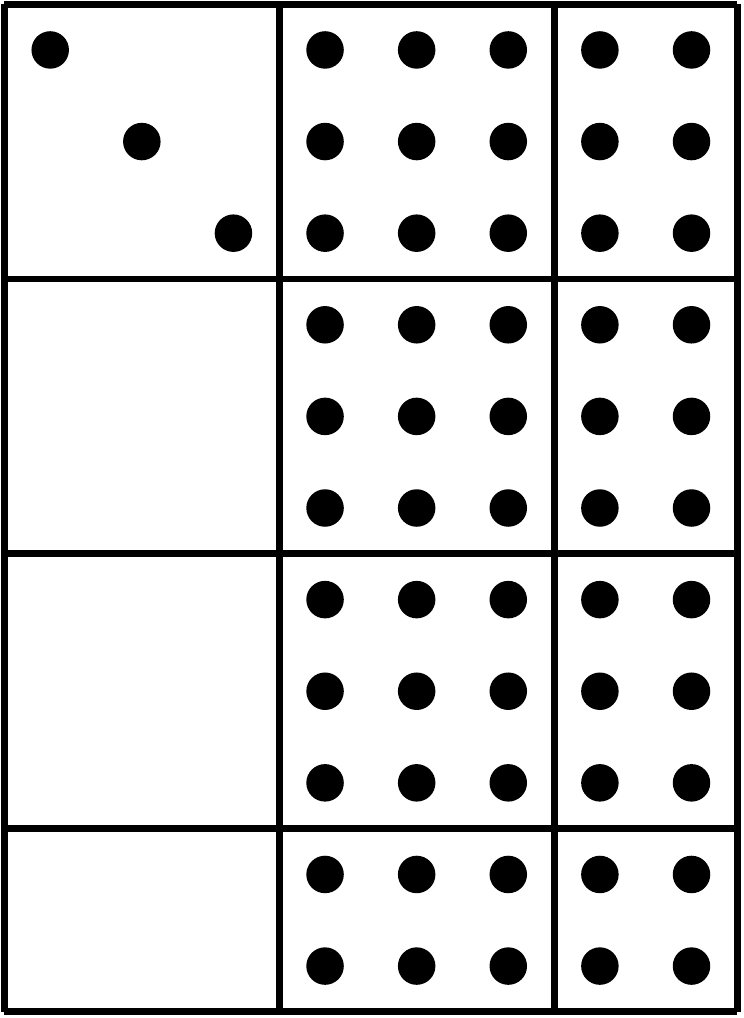}
&
\includegraphics[height=.2\textwidth]{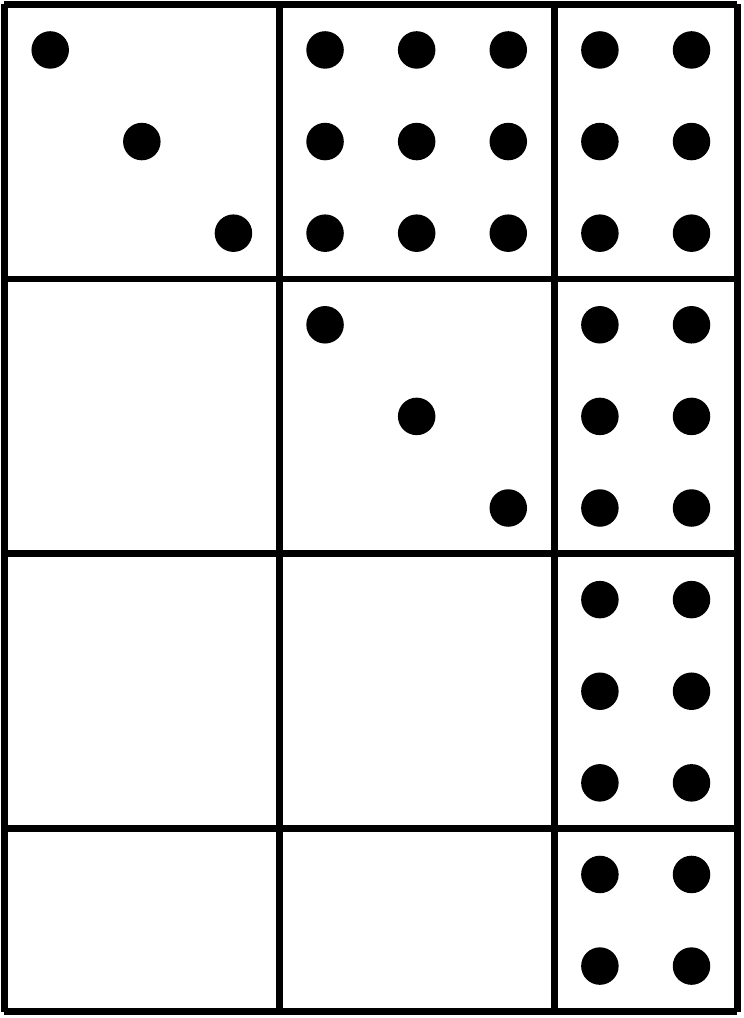}
&
\includegraphics[height=.2\textwidth]{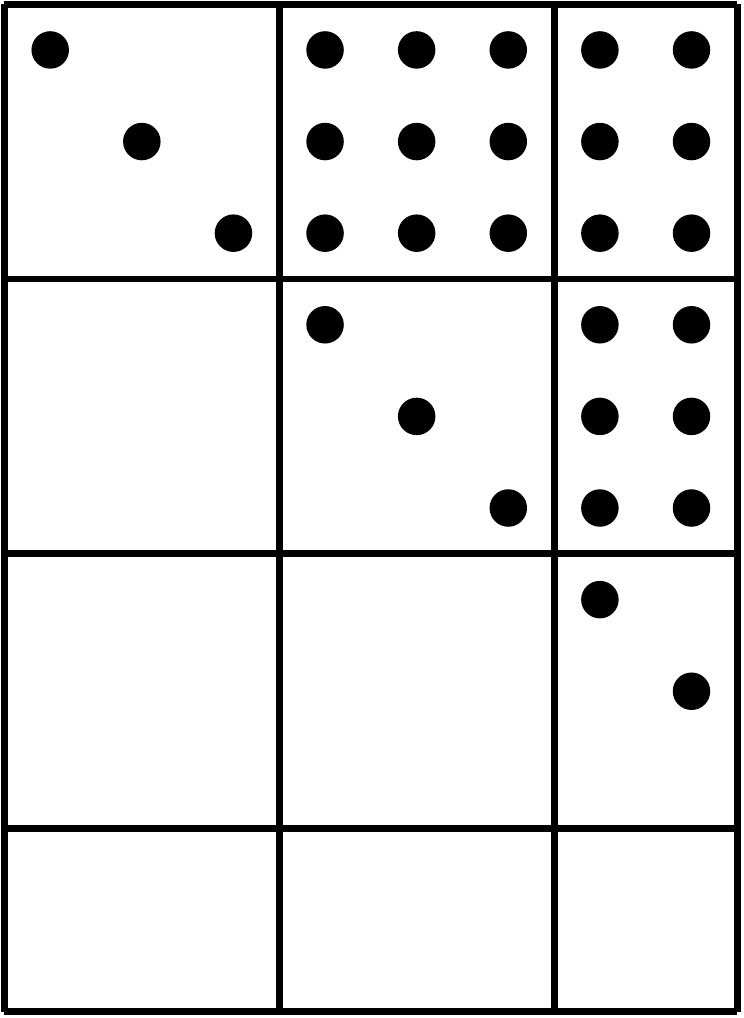}
\\
after 0 steps:  & after 1 step:  & after 2 steps: & after 3 steps: \\
$\mtx{T}^{(0)} \defeq \mtx{A}$ & $\mtx{T}^{(1)} \defeq (\mtx{U}^{(1)})^* \mtx{T}^{(0)} \mtx{V}^{(1)}$ & $ \mtx{T}^{(2)} \defeq (\mtx{U}^{(2)})^* \mtx{T}^{(1)} \mtx{V}^{(2)}$ & $ \mtx{T}^{(3)} \defeq (\mtx{U}^{(3)})^* \mtx{T}^{(2)} \mtx{V}^{(3)}$
\end{tabular}
\end{center}
\caption{
An illustration of the sparsity pattern of the four matrices $\mtx{T}^{\si}$ that appear in \textsc{randUTV}, for the particular case where $m=11, n=8$, and $b=3$.
}
\label{fig:ch5-randutv-pattern}
\end{figure}
When the $\mtx{U}$ and $\mtx{V}$ matrices are desired, they can be built with the computations
\begin{align*}
\mtx{U} & \defeq \mtx{U}^{(1)} \mtx{U}^{(2)} \cdots \mtx{U}^{(s)}, \\
\mtx{V} & \defeq \mtx{V}^{(1)} \mtx{V}^{(2)} \cdots \mtx{V}^{(s)}.
\end{align*}

In practice, the $\mtx{T}^{\si},$ $\mtx{U}^{\si}$ and $\mtx{V}^{\si}$ are not stored separately to save memory. Instead, the space for $\mtx{T}$, $\mtx{U}$, and $\mtx{V}$ is allocated at the beginning of randUTV, and at iteration $i$ each is updated with
\begin{align*}
\mtx{T} & \leftarrow \left(\mtx{U}^{\si}\right)^* \mtx{T} \mtx{V}^{\si}, \\
\mtx{V} & \leftarrow \mtx{V} \mtx{V}^{\si}, \\
\mtx{U} & \leftarrow \mtx{U} \mtx{U}^{\si},
\end{align*}
where $\mtx{U}^{\si}$ and $\mtx{V}^{\si}$ may be discarded or overwritten after an iteration completes.


To motivate the process of building $\mtx{U}^{\si}$ and $\mtx{V}^{\si}$, consider the first step of \textsc{randUTV}. We begin by initializing $\mtx{T} \defeq \mtx{A} \in \mathbb{R}^{m \times n}$ and creating a helpful partition
\[
\mtx{T} = \begin{bmatrix} \mtx{T}_{11} & \mtx{T}_{12} \\ \mtx{T}_{21} & \mtx{T}_{22} \end{bmatrix},
\]
where $\mtx{T}_{11}$ is $b \times b$, {$\mtx{T}_{12}$ is $b \times (n-b)$, $\mtx{T}_{21}$ is $(m-b) \times b$, and $\mtx{T}_{22}$ is $(m-b) \times (n-b)$}. The goal is to construct $\mtx{V}^{(1)} \in \mathbb{R}^{n \times n}$ and $\mtx{U}^{(1)} \in \mathbb{R}^{m \times m}$ such that, after the update $\mtx{T} \leftarrow (\mtx{U}^{(1)})^* \mtx{T} \mtx{V}^{(1)}$,
\begin{enumerate}
\item $\mtx{T}_{11}$ is {diagonal, (with entries that closely approximate the leading $b$ singular values of \mtx{A})}
\item $\mtx{T}_{21} = \mtx{0}$,
\item $\sigma_{\min}(\mtx{T}_{11}) \approx \sigma_b(\mtx{A})$,
\item $\sigma_{\max}(\mtx{T}_{22}) \approx \sigma_{b+1}(\mtx{A})$,
\item $\mtx{T}_{11}(k,k) \approx \sigma_k(\mtx{A}),$ $k=1,2,\ldots,b$.
\end{enumerate}
Items (1) and (2) in the list are basic requirements for any UTV factorization, and the rest of the items relate to the decomposition's rank-revealing properties.

{The key observation is that if $\mtx{V}^{(1)}$ and $\mtx{U}^{(1)}$ were orthogonal matrices whose leading $b$ columns spanned the same subspace as the leading $b$ right and left singular vectors, respectively, of $\mtx{A}$, items (2)-(4) in the previous list would be satisfied perfectly.} (Items (1) and (5) could then be satisfied with an inexpensive post-processing step: compute the SVD of $\mtx{T}_{11} \in \mathbb{R}^{b \times b}$ and update $\mtx{V}^{(1)}$ and $\mtx{U}^{(1)}$ accordingly.)
 However, determining the singular vector subspaces is as difficult as computing a partial SVD of $\mtx{A}$. We therefore content ourselves with the goal of building $\mtx{V}^{(1)}$ and $\mtx{U}^{(1)}$ such that the spans of the leading $b$ columns are \textit{approximations} of the subspaces spanned by the leading $b$ right and left singular vectors, respectively, of $\mtx{A}$.
%
%
%
We can achieve this goal efficiently using a variant of the randomized range finder algorithm discussed in Section \ref{sec:ch5-rsvd}. Specifically, we build $\mtx{V}^{(1)}$ as follows:
\begin{enumerate}[1.]
\item Generate a standard Gaussian matrix  $\mtx{G} \in \mathbb{R}^{m \times b}$.
\item Sample Row($\mtx{A}$) by calculating $\mtx{Y} = (\mtx{A}^* \mtx{A})^q \mtx{A}^* \mtx{G} \in \mathbb{R}^{n \times b}$, where $q$ is a small non-negative integer.
\item Compute the full (unpivoted) QR factorization of  $\mtx{Y}$ to obtain an orthogonal matrix $\mtx{V}^{(1)}$, \textit{i.e.}, $[\mtx{V}^{(1)}, \sim] = \textsc{HQR\_full}(\mtx{Y})$.
\footnote{{According to Theorem~\ref{th:rank} and Theorem~\ref{th:rank3} in Appendix~\ref{a:proof}, we know that, with probability 1, rank$(\mtx{Y})=\text{rank}(\mtx{A})$ and the first $\text{rank}(\mtx{A})$ columns in $\mtx{Y}$ are linearly independent.}}
\end{enumerate}

Once $\mtx{V}^{(1)}$ is built, the path to $\mtx{U}^{(1)}$ becomes clear after one observation. If the leading $b$ columns of $\mtx{V}^{(1)}$ span exactly the same subspace as the leading $b$ right singular vectors of $\mtx{A}$, then the leading $b$ columns of $\mtx{A} \mtx{V}^{(1)}$ span exactly the same subspace as the leading $b$ \textit{left} singular vectors of $\mtx{A}$. Therefore, after computing $\mtx{V}^{(1)}$, we build $\mtx{U}^{(1)}$ as follows:
\begin{enumerate}[1.]
\item Perform the matrix multiplication $\mtx{B} = \mtx{A} \mtx{V}^{(1)}(:,1:b) \in \mathbb{R}^{m \times b}$.
\item Compute the full (unpivoted) QR factorization of $\mtx{B}$ to obtain an orthogonal matrix $\mtx{U}^{(1)}$, \textit{i.e.}, $[\mtx{U}^{(1)}, \sim] = \textsc{HQR\_full}(\mtx{B})$.
\footnote{{It is easy to show that, with probability 1, rank$(\mtx{B})=\text{rank}(\mtx{A})$ and the first $\text{rank}(\mtx{A})$ columns in $\mtx{B}$ are linearly independent.}}
\end{enumerate}

{Following the same procedure, we can build $\hat{\mtx{V}}^{(i+1)} \in \mathbb{R}^{(n-ib) \times (n-ib)}$ and $\hat{\mtx{U}}^{(i+1)} \in \mathbb{R}^{(m-ib) \times (m-ib)}$ using the bottom right block $\mtx{T}^{(i)}\left((ib+1):m, (ib+1):n\right)$ for $i=1,2,\ldots,s-2$.
At the last step, the SVD of the remaining block $\mtx{T}^{(s-1)}\left(((s-1)b+1):m, ((s-1)b+1):n\right)$ gives
$\hat{\mtx{V}}^{(s)} \in \mathbb{R}^{(n-(s-1)b) \times (n-(s-1)b)}$ and $\hat{\mtx{U}}^{(s)} \in \mathbb{R}^{(m-(s-1)b) \times (m-(s-1)b)}$. Then, we have}
\[
\mtx{V}^{(i+1)} = \begin{bmatrix} \mtx{I}_{ib} &  \\  & \hat{\mtx{V}}^{(i+1)} \end{bmatrix}
\quad \text{and} \quad
\mtx{U}^{(i+1)} = \begin{bmatrix} \mtx{I}_{ib} &  \\  & \hat{\mtx{U}}^{(i+1)} \end{bmatrix},
\quad \text{for $i=0,1,2,\ldots,s-1$.}
\]
Notice that the transformation matrices $\mtx{U}^{\si}$ and $\mtx{V}^{\si}$ can be applied to the $\mtx{T}^{\si}$ matrices efficiently as discussed in Section~\ref{sec:qr}. We describe the basic \textsc{randUTV} algorithm, adapted from \cite{2015_martinsson_blocked}, in Appendix~\ref{a:utv}.




An important feature of \textsc{randUTV} is that {if a low-rank approximation of $\mtx{A}$ is needed, the factorization can be halted once a prescribed tolerance  has been met.} In particular, consider the following partition
\[
\mtx{A} = \mtx{U} \mtx{T} \mtx{V}^* =
\begin{bmatrix} \mtx{U}_1 & \mtx{U}_2 \end{bmatrix}
\begin{bmatrix} \mtx{T}_{11} & \mtx{T}_{12} \\ \mtx{0} & \mtx{T}_{22} \end{bmatrix}
\begin{bmatrix} \mtx{V}_1 & \mtx{V}_2 \end{bmatrix}^*,
\]
where $\mtx{U}_1$ and $\mtx{V}_1$ contain the first $k$ columns in the corresponding matrices and $\mtx{T}_{11}$ is $k \times k$. The rank-$k$ approximation from  \textsc{randUTV} is
\[
\mtx{A}_k = \mtx{U}_1 (\mtx{T}_{11} \mtx{V}_1^* + \mtx{T}_{12} \mtx{V}_2^*),
\]
and  the approximation error is
\begin{equation} \label{e:err}
\| \mtx{A} - \mtx{A}_k \|
= \| \mtx{U}_2 \mtx{T}_{22} \mtx{V}_2^* \|
= \| \mtx{T}_{22} \|,
\end{equation}
where $\mtx{U}_2$ and $\mtx{V}_2$ are orthonormal matrices. Therefore, the factorization can be terminated as long as $\| \mtx{T}_{22} \|$ becomes smaller than a prescribed tolerance. In our blocked algorithm, we can calculate and check
\[
e_i = \| \mtx{T}^{(i)}((ib+1):m, (ib+1):n) \|
\]
at the $i$-th iteration for $i=0,1,\ldots,s-1$.

{For applications where errors are measured  using the Frobenius norm, a more efficient algorithm is the following.} Notice that (\ref{e:err}) holds for the Frobenius norm as well. In fact, we have
\[
\| \mtx{A} - \mtx{A}_k \|_F^2
= \| \mtx{T}_{22} \|_F^2
= \|  \mtx{T} \|_F^2 - \| \mtx{T}_{11}  \|_F^2 - \| \mtx{T}_{12}  \|_F^2
= \|  \mtx{A} \|_F^2 - \| \mtx{T}_{11}  \|_F^2 - \| \mtx{T}_{12}  \|_F^2,
\]
where $\|\cdot\|_F$ denotes the Frobenius norm. So we need to only pre-compute $\|  \mtx{A} \|_F^2$ and update it with $\| \mtx{T}_{11}  \|_F^2 + \| \mtx{T}_{12}  \|_F^2$ involving two small matrices at every iteration in \textsc{randUTV}. Specifically, we have
\[
e_0 = \|  \mtx{A} \|_F
\]
and
\[
e_i^2 = e_{i-1}^2 -
\| \mtx{T}^{(i)}(((i-1)b+1):ib, ((i-1)b+1):n) \|_F^2
\]
for $i=1,2,\ldots,s-1$. A similar approach for the RSVD was proposed in~\cite{yu2018efficient}.

\begin{remark}
The \textsc{randUTV} can be said to be a ``blocked incremental RSVD'' in the sense that the first step of the method is \textit{identical} to the RSVD. In~\cite[Section 5.3]{martinsson2019randutv}, the authors demonstrate that the low-rank approximation error that results from the single-step \textsc{randUTV}  factorization is identical to the error produced by the corresponding RSVD.
\end{remark}



\subsection{Using oversampling in the \textsc{randUTV} algorithm}
\label{sec:ch5-oversampling}

In \textsc{randUTV}, the computation of matrix $\mtx{V}^{\si} \in \mathbb{R}^{n \times n}$ relies on the randomized range finder algorithm (discussed in Section \ref{sec:ch5-rsvd}). Just as the randomized range finder algorithm can use an oversampling parameter $p$ to improve the success probability, we add a similar parameter $p$ to the construction of $\mtx{V}^{\si}$ in Algorithm \ref{alg:randutv-basic} to improve the rank-revealing properties of the resulting factorization.

To do so, we first change the size of the random matrix $\mtx{G}$ from $m \times b$ to $m \times (b + p)$ (we once again consider only the building of $\mtx{V}^{(1)}$ to simplify matrix dimensions). This effectively increases the number of times we sample Row($\mtx{A}$) by $p$, providing a ``correction'' to the information in the first $b$ samples.

Next, we must modify how we obtain the orthonormal vectors that form $\mtx{V}^{(1)}$. Recall that \textit{the first $b$ columns} of $\mtx{V}^{(1)}$ must contain the approximation to the leading $b$ right singular vectors of $\mtx{A}$. If we merely orthonormalized the columns of $\mtx{Y} = (\mtx{A}^* \mtx{A})^q \mtx{A}^* \mtx{G}  \in \mathbb{R}^{n \times (b+p)}$ with \textsc{HQR} again, the first $b$ columns of $\mtx{V}^{(1)}$ would only contain information from the first $b$ columns of $\mtx{Y}$. We must therefore choose a method for building $\mtx{V}^{(1)}$ such that its first $b$ columns contain a good approximation of Col($\mtx{Y}$). The following approaches use two of the most common rank-revealing factorizations to solve this subproblem:
\begin{enumerate}[a)]
\item Perform a \textsc{CPQR} on $\mtx{Y}$ to obtain the orthogonal matrix $\mtx{V}^{(1)}\in \mathbb{R}^{n \times n}$.
The additional computational expense of computing \textsc{HQRCP} is relatively inexpensive for thin matrices like $\mtx{Y}$. However, the approximation provided by $\mtx{V}^{(1)}(:,1:b)$ to Col($\mtx{Y}$) in this case will be suboptimal as mentioned in Section \ref{sec:ch5-cpqr}.

\item Perform an SVD on $\mtx{Y}$ to obtain an orthogonal matrix $\mtx{W} \in \mathbb{R}^{n \times n}$.
Then $\mtx{W}(:,1:b)$ contains the optimal rank-$b$ approximation of Col($\mtx{Y}$). However, this method requires one more step since $\mtx{V}^{(1)}$ must be represented as a product of Householder vectors in order to compute $\mtx{A} \mtx{V}^{(1)}$ efficiently in the following step of \textsc{randUTV}. After computing the SVD, therefore, we must perform a full (unpivoted) QR decomposition on $\mtx{W}(:,1:b)$, \textit{i.e.}, $[\mtx{V}^{(1)}, \sim] = \textsc{HQR\_full}(\mtx{W}(:,1:b))$.
\footnote{{We would have detected rank deficiency $\mbox{rank}($\mtx{A}$) < b$ with the SVD, and the first $\mbox{rank}($\mtx{A}$)$ columns in $\mtx{W}$ must be linearly independent.}}
This method yields the optimal approximation of Col($\mtx{Y}$) given by $\mtx{V}^{(1)}(:,1:b)$.
\end{enumerate}


In this article, we use method b) because it provides better approximations.
As discussed in Remark \ref{remark:tall-thin-svd} below, method b) requires a full (unpivoted) \textsc{HQR} of size $n \times (b+p)$, an SVD of size $(b+p) \times (b+p)$, and a full (unpivoted) \textsc{HQR} of size $n \times b$. While the SVD is small and therefore quite cheap, the first \textsc{HQR} is an extra expense which is nontrivial when aggregated across every iteration in \textsc{randUTV}.
This extra cost is addressed and mitigated in Section \ref{sec:ch5-reducecost}.

\begin{remark}
The SVD of method b) above may look expensive at first glance. However, recall that $\mtx{Y}$ is of size $n \times (b + p)$, where $n \gg b + p$. For tall, thin matrices like this, the SVD may be inexpensively computed as follows~\cite{chan1982improved}:
\begin{enumerate}[1.]
\item Compute a full (unpivoted) QR decomposition of $\mtx{Y}$ to obtain an orthogonal matrix $\mtx{Q}\in \mathbb{R}^{n \times n}$ and an upper trapezoidal matrix $\mtx{R}\in \mathbb{R}^{n \times (b+p)}$, \textit{i.e.}, $[\mtx{Q}, \mtx{R}] = \textsc{HQR\_full}(\mtx{Y})$.\footnote{{According to Theorem~\ref{th:rank} and Theorem~\ref{th:rank3} in Appendix~\ref{a:proof}, we know that, with probability 1, rank$(\mtx{Y})=\text{rank}(\mtx{A})$ and the first $\text{rank}(\mtx{A})$ columns in $\mtx{Y}$ are linearly independent.}}
\item Compute the SVD of the square matrix $\mtx{R}(1:(b+p),:)$ to obtain an orthogonal matrix $\hat{\mtx{U}} \in \mathbb{R}^{(b + p) \times (b + p)}$ with left singular vectors of $\mtx{R}(1:(b+p),:)$, \textit{i.e.}, $[\hat{\mtx{U}}, \sim, \sim] = \text{SVD}(\mtx{R}(1:(b+p),:))$.
\end{enumerate}
After these steps, we recognize that the matrix of left singular vectors of matrix $\mtx{Y}$ is
\[
\mtx{W} = \mtx{Q}
\begin{bmatrix} \hat{\mtx{U}} & \\ & \mtx{I}_{n-(b+p)} \end{bmatrix}
  \in \mathbb{R}^{n \times n}.
\]
{The costs of the first step dominate the entire procedure, which are $O(n (b+p))$ storage and  $O(n (b+p)^2)$ work, respectively, according to Remark~\ref{rmk:qr}.}

\label{remark:tall-thin-svd}
\end{remark}


For randomized subspace iteration techniques like the one used to build $\mtx{V}^{\si},$ the stability of the iteration is often a concern. As in the \textsc{powerURV} algorithm, the information from any singular values less than {$\epsilon_{\text{machine}}^{1/(2q+1)} \sigma_{\max}(\mtx{A})$} will be lost unless an orthonormalization procedure is used during intermediate steps of the computation $\mtx{Y} = \left( \mtx{A}^* \mtx{A} \right)^q \mtx{A}^* \mtx{G}$. For \textsc{randUTV}, only $b$ singular vectors are in view in each iteration, so orthonormalization is not required as often as it is for \textsc{powerURV}.

However, if oversampling is used ($p>0$), performing one orthonormalization before the final application of $\mtx{A}^*$ markedly benefits the approximation of Col($\mtx{Y}$) to the desired singular vector subspace. Numerical experiments show that this improvement occurs even when the sampling matrix is not in danger of loss of information from roundoff errors. Instead, we may intuitively consider that using orthonormal columns to sample $\mtx{A}^*$ ensures that the ``extra'' $p$ columns of $\mtx{Y}$ contain information not already accounted for in the first $b$ columns.

\subsection{Reducing the cost of oversampling and orthonormalization}
\label{sec:ch5-reducecost}

Adding oversampling to \textsc{randUTV}, as discussed in Sections \ref{sec:ch5-oversampling}, adds noticeable cost to the algorithm. First, it requires a costlier method of building $\mtx{V}^{\si}$. Second, it increases the dimension of the random matrix $\mtx{G}$, increasing the cost of all the operations involving $\mtx{G}$ and, therefore, $\mtx{Y}$. 
{We will in this section demonstrate that the overhead caused by the latter cost can be essentially
eliminated by recycling the ``extra'' information we collected in one step when we execute the next step.}

To demonstrate, consider the state of \textsc{randUTV} for input matrix $\mtx{A} \in \mathbb{R}^{m \times n}$ with $q=2, p = b$ after one step of the main iteration. Let $\mtx{A}_q = (\mtx{A}^* \mtx{A})^q \mtx{A}^*$ denote the matrix for power iteration. At this point, we have computed and stored the following relevant matrices:
\begin{itemize}
\item $\mtx{Y}^{(1)} \in \mathbb{R}^{n \times (b+p)}$: The matrix containing random samples of Row($\mtx{A}$), computed with \[\mtx{Y}^{(1)} = \mtx{A}_q \mtx{G}^{(1)},\] where $\mtx{G}^{(1)} \in \mathbb{R}^{m \times (b+p)}$ is a standard Gaussian matrix.

\item $\mtx{W}^{(1)} \in \mathbb{R}^{n \times n}$: The matrix whose columns are the left singular vectors of $\mtx{Y}^{(1)}$, computed with \[[\mtx{W}, \sim, \sim] = \text{SVD}(\mtx{Y});\] see Section \ref{sec:ch5-oversampling} method b).

\item $\mtx{V}^{(1)} \in \mathbb{R}^{n \times n}$:  The right transformation in the main step of \textsc{randUTV}, computed with \[[\mtx{V}^{(1)}, \sim] = \textsc{HQR\_full}(\mtx{W}^{(1)}(:,1:b));\] see Section \ref{sec:ch5-oversampling} method b).

\item $\mtx{U}^{(1)} \in \mathbb{R}^{m \times m}$: The left transformation in the main step of \textsc{randUTV}, computed with \[[\mtx{U}^{(1)},\sim] = \textsc{HQR\_full}(\mtx{A} \mtx{V}^{(1)}(:,1:b));\] see Section \ref{sec:ch5-randutvalg}.

\item $\mtx{T}^{(1)} \in \mathbb{R}^{m \times n}$: The matrix being driven to upper trapezoidal form. At this stage in the algorithm, \[\mtx{T}^{(1)} = \left(\mtx{U}^{(1)}\right)^* \mtx{A} \mtx{V}^{(1)}.\]
\end{itemize}

Finally, consider the partitions
\begin{align*}
\mtx{Y}^{(1)} & = \begin{bmatrix} \mtx{Y}^{(1)}_1 & \mtx{Y}^{(1)}_2 \end{bmatrix}, \\
\mtx{W}^{(1)} & = \begin{bmatrix} \mtx{W}^{(1)}_1 & \mtx{W}^{(1)}_2 \end{bmatrix}, \\
\mtx{V}^{(1)} & = \begin{bmatrix} \mtx{V}^{(1)}_1 & \mtx{V}^{(1)}_2 \end{bmatrix}, \\
\mtx{U}^{(1)} & = \begin{bmatrix} \mtx{U}^{(1)}_1 & \mtx{U}^{(1)}_2 \end{bmatrix}, \\
\mtx{T}^{(1)} & = \begin{bmatrix} \mtx{T}_{11} & \mtx{T}_{12} \\ \mtx{T}_{21} & \mtx{T}_{22} \end{bmatrix}, \\
\end{align*}
where $\mtx{Y}^{(1)}_1, \mtx{W}^{(1)}_1,\mtx{V}^{(1)}_1$ and $\mtx{U}^{(1)}_1$ contain the first $b$ columns in corresponding matrices, and $\mtx{T}_{11}$ is $b \times b$.

In the second iteration of \textsc{randUTV}, the first step is to approximate Row($\mtx{T}_{22}$), where
\[
\mtx{T}_{22}^* = \left(\mtx{V}^{(1)}_2\right)^* \mtx{A}^* \mtx{U}^{(1)}_2.
\]
 Next, we make the observation that, just as Col($\mtx{W}^{(1)}_1$) approximates the subspace spanned by the leading $b$ right singular vectors of $\mtx{A}$, the span of the first $p$ columns of $\mtx{W}^{(1)}_2$ approximates the subspace spanned by the leading $(b+1)$-th through $(b+p)$-th right singular vectors of $\mtx{A}$. Thus, Col$\left(\left(\mtx{V}^{(1)}_2\right)^* \mtx{W}^{(1)}_2(:,1:p)\right)$ is an approximation of Col$\left(\left(\mtx{V}^{(1)}_2\right)^*\mtx{A}^*\right)$. This multiplication involving two small matrix dimensions is much cheaper than carrying out the full power iteration process.


Putting it all together, on every iteration of \textsc{randUTV} after the first, we build the sampling matrix $\mtx{Y}^{\si}$ of $b+p$ columns in two stages. First, we build $\mtx{Y}^{\si}(:,1:b)$ \emph{without} oversampling or orthonormalization as in Section \ref{sec:ch5-randutvalg}. Second, we build
\[
\mtx{Y}^{\si}(:,(b+1):(b+p)) = \left(\mtx{V}^{\simo}(:,(b+1):(n-b(i-1)))\right)^* \mtx{W}^{\simo}(:,(b+1):(b+p))
\]
 by reusing the samples from the last iteration. The complete algorithm, adjusted to improve upon the low-rank estimation accuracies of the original \textsc{randUTV}, is given in {Algorithm} \ref{alg:randutv-boosted}.

\begin{algorithm}
\begin{algorithmic}[1]
 \REQUIRE {matrix $\mtx{A} \in \mathbb{R}^{m \times n}$, positive integers $b$ and $p$, and a non-negative integer $q$.}
 \ENSURE {$\mtx{A} = \mtx{U} \mtx{T} \mtx{V}^*$, where $\mtx{U} \in \mathbb{R}^{m \times m}$  and $\mtx{V} \in \mathbb{R}^{n \times n}$ are orthogonal, and $\mtx{T} \in \mathbb{R}^{m \times n}$ is upper trapezoidal.}
\STATE $\mtx{T} = \mtx{A}; \mtx{U} = \mtx{I}_m; \mtx{V} = \mtx{I}_n;$
\FOR{$i=1$:min$(\lceil m/b, \rceil, \lceil n/b \rceil)$}
  \STATE $I_1 = 1:(b(i-1)); I_2 = (b(i-1)+1):\min(bi,m); I_3 = (bi+1):m;$
  \STATE $J_1 = 1:(b(i-1)); J_2 = (b(i-1)+1):\min(bi,n); J_3 = (bi+1):n;$
  \IF{($I_3$ and $J_3$ are both nonempty)}
    \IF{$(i == 1)$}
      \STATE {$\mtx{G} = \textsc{randn}(m,b+p)$}
      \STATE $\mtx{Y} = \mtx{T}^* \mtx{G}$
      \FOR {$j=1:q$}
      	\STATE $\mtx{Y} = \mtx{T}^* (\mtx{T} \mtx{Y})$
      \ENDFOR
    \ELSE
      \STATE {$\mtx{G} = \textsc{randn}(m - b(i-1),b)$}
    \STATE $\mtx{Y}(:,1:b) = \mtx{T}([I_2,I_3],[J_2,J_3])^* \mtx{G}$
    \FOR{$j=1:(q-1)$}
	\STATE $\mtx{Y}(:,1:b) = \mtx{T}([I_2,I_3],[J_2,J_3])^*(\mtx{T}([I_2,I_3],[J_2,J_3])\mtx{Y}(:,1:b))$
    \ENDFOR
	\STATE $\mtx{X} = \mtx{T}([I_2,I_3],[J_2,J_3])\mtx{Y}(:,1:b)$
	\STATE $\mtx{X} = \mtx{X} - \mtx{W}_{\text{next}}\mtx{W}_{\text{next}}^* \mtx{X}$
        \STATE $[\mtx{Q},\sim] = \textsc{HQR\_full}(\mtx{X})$
         \STATE $\mtx{Y} =  \mtx{T}([I_2,I_3],[J_2,J_3])^*[\mtx{Q}(:,1:b), \mtx{W}_{\text{next}}]$
    \ENDIF

    \STATE $[\mtx{W}_Y, \sim, \sim] = \textsc{SVD}(\mtx{Y})$
    \STATE $[\mtx{V}^{\si} , \sim] = \textsc{HQR\_full}(\mtx{W}_Y(:,1:b))$
    \STATE $\mtx{T}(:,[J_2,J_3]) = \mtx{T}(:,[J_2,J_3]) \mtx{V}^{\si}$
    \STATE $\mtx{V}(:,[J_2,J_3]) = \mtx{V}(:,[J_2,J_3]) \mtx{V}^{\si} $
    \STATE $\mtx{W}_{\text{next}} = \left(\mtx{V}^{\si}(:,(b+1):(n-b(i-1))\right)^* \mtx{W}_Y(:,(b+1):(b+p)) $
    \STATE
    \STATE $[\mtx{U}^{\si},\mtx{R}] = \textsc{HQR\_full}(\mtx{T}([I_2,I_3],J_2))$
    \STATE $\mtx{U}(:,[I_2,I_3]) = \mtx{U}(:,[I_2,I_3]) \mtx{U}^{\si}$
    \STATE $\mtx{T}([I_2,I_3],J_3) = \left(\mtx{U}^{\si}\right)^* \mtx{T}([I_2,I_3,J_3])$
    \STATE $\mtx{T}(I_3,J_2) = \textsc{zeros}(m-bi,b)$
    \STATE
    \STATE $[\mtx{U}_{\text{small}},\mtx{D}_{\text{small}},\mtx{V}_{\text{small}}] = \textsc{svd}(\mtx{R}(1:b,1:b))$
    \STATE $\mtx{U}(:,I_2) = \mtx{U}(:,I_2) \mtx{U}_{\text{small}}$
    \STATE $\mtx{V}(:,J_2) = \mtx{V}(:,J_2) \mtx{V}_{\text{small}}$
    \STATE $\mtx{T}(I_2,J_2) = \mtx{D}_{\text{small}}$
    \STATE $\mtx{T}(I_2,J_3) = \mtx{U}_{\text{small}}^* \mtx{T}(I_2,J_3)$
    \STATE $\mtx{T}(I_1,J_2) = \mtx{T}(I_1,J_2) \mtx{V}_{\text{small}}$
  \ELSE
    \STATE $[\mtx{U}_{\text{small}},\mtx{D}_{\text{small}},\mtx{V}_{\text{small}}] = \textsc{svd}(\mtx{T}([I_2,I_3],[J_2,J_3]))$
    \STATE $\mtx{U}(:,[I_2,I_3]) = \mtx{U}(:,[I_2,I_3]) \mtx{U}_{\text{small}}$
    \STATE $\mtx{V}(:,[J_2,J_3]) = \mtx{V}(:,[J_2,J_3]) \mtx{V}_{\text{small}}$
    \STATE $\mtx{T}([I_2,I_3],[J_2,J_3]) = \mtx{D}_{\text{small}}$
    \STATE $\mtx{T}([I_1,[J_2,J_3]) = \mtx{T}(I_1,[J_2,J_3]) \mtx{V}_{\text{small}}$
  \ENDIF
\ENDFOR
\end{algorithmic}
\caption{[\mtx{U},\mtx{T},\mtx{V}] = \textsc{randUTV\_boosted}(\mtx{A},b,q,p)}
\label{alg:randutv-boosted}
\end{algorithm}

\section{Implementation details} \label{sec:gpu}

As mentioned earlier, \textsc{powerURV} (Algorithm~\ref{alg:powerURV-stable}) and \textsc{randUTV} (Algorithm~\ref{alg:randutv-boosted}) mainly consist of Level-3 BLAS routines such as matrix-matrix multiplication (\textsc{gemm}), which can execute extremely efficiently on modern  computer architectures such as GPUs. Two essential features of GPUs from the algorithmic design perspective are the following: (1) The amount of parallelism available is massive. For example, an NVIDIA V100 GPU has 5120 CUDA cores. (2) The costs of data-movement against computation are orders of magnitude higher. As a result, Level-1 or Level-2 BLAS routines do not attain a significant portion of GPUs' peak performance.

Our GPU implementation of the \textsc{powerURV} algorithm and \textsc{randUTV} algorithm uses a mix of routines from the cuBLAS library\footnote{https://docs.nvidia.com/cuda/cublas/index.html} from NVIDIA and the MAGMA library~\cite{tdb10,tnld10,dghklty14}. {The MAGMA library is a collection of next generation linear algebra routines with GPU acceleration. 
%
For the \textsc{powerURV} algorithm, we use the \textsc{gemm} routine from cuBLAS and routines related to (unpivoted) QR decomposition from MAGMA. For the \textsc{randUTV} algorithm, we mostly use MAGMA routines except for the \textsc{gemm} routine from cuBLAS to apply the Householder reflectors (see Section~\ref{sec:qr}). These choices are mainly guided by empirical performance.

{Our implementation consists of a sequence of GPU level-3 BLAS calls. The matrix is copied to the GPU at the start, and all computations (with one exception, see below) are done on the GPU with no communication back to main memory until the computation completes.

The exception is that the SVD subroutines in MAGMA do not support any GPU interface that takes an input matrix  on the GPU. This is a known issue of the MAGMA library, and we follow the common workaround: copy the input matrix from GPU to CPU and then call a MAGMA SVD subroutine (MAGMA  copies the matrix back to GPU and computes its SVD).

Fortunately, the extra cost of data transfer is negligible because the matrices whose SVDs are needed in Algorithm 2 are all very small (of dimensions at most $(b+p) \times (b+p)$).}

\newpage

\section{Numerical results}
\label{sec:ch5-num}

In this section, we present numerical experiments to demonstrate the performance of \textsc{powerURV} (Algorithm~\ref{alg:powerURV-stable}) and \textsc{randUTV} (Algorithm~\ref{alg:randutv-boosted}). In particular, we compare them to the SVD and the \textsc{HQRCP} in terms of speed and accuracy. Since the SVD is the most accurate method, we use it as the baseline to show the speedup and the accuracy of other methods. Results of the SVD were obtained using the MAGMA routine \textsc{MAGMA\_DGESDD},\footnote{{The DGESDD algorithm uses a divide-and-conquer algorithm, which is different from the DGESVD algorithm based on QR iterations. The former is also known to be faster for large matrices;} see \url{https://www.netlib.org/lapack/lug/node71.html} and Table 1 with CPU timings of both DGESVD and DGESDD in~\cite{martinsson2019randutv}} where all orthogonal matrices are calculated. Results of the \textsc{HQRCP} were obtained using the MAGMA routine \textsc{MAGMA\_DGEQP3}, where the orthogonal matrix $\mtx{Q}$ was calculated. 


%
%
%
%

All experiments were performed on an NVIDIA Tesla V100 graphics card with 32 GB of memory, which is connected to  two Intel Xeon Gold 6254 18-core CPUs at 3.10GHz. Our code makes extensive use of the MAGMA (Version 2.5.4) library linked with the Intel MKL library (Version 20.0). It was compiled with the NVIDIA compiler NVCC (Version 11.3.58) on the Linux OS (5.4.0-72-generic.x86\_64).

\subsection{Computational speed}
\label{sec:ch5-speed}

In this section, we investigate the speed of {\textsc{powerURV} and} \textsc{randUTV} on the GPU {and compare them to highly optimized implementations of the SVD and the \textsc{HQRCP} for the GPU.} In Figures \ref{fig:ch5-powurv-times}, \ref{fig:ch5-randutv-basic}, and \ref{fig:ch5-randutv-boosted}, every factorization is computed on a standard Gaussian matrix $\mtx{A} \in \mathbb{R}^{n \times n}$. {All methods discussed here are ``direct'' methods, whose running time does not depend on the input matrix but its  size.} We compare the times (in seconds) of different algorithms/codes, {where the input and output matrices exist on the \emph{CPU} (time for moving data between CPU and GPU is included).}

In each plot, we divide the given time by $n^3$ to make the asymptotic relationships between each experiment more clear, where {$n=3\,000, 4\,000, 5\,000, 6\,000, 8\,000, 10\,000, 12\,000, 15\,000, 20\,000, 30\,000$. The  MAGMA routine \textsc{magma\_dgesdd} for computing the SVD run out of memory when $n=30\,000$.} {Raw timing results are given in Table~\ref{t:raw} in Appendix~\ref{a:data}.}


\begin{figure}[h]
\centering
\includegraphics[width=0.48\textwidth]{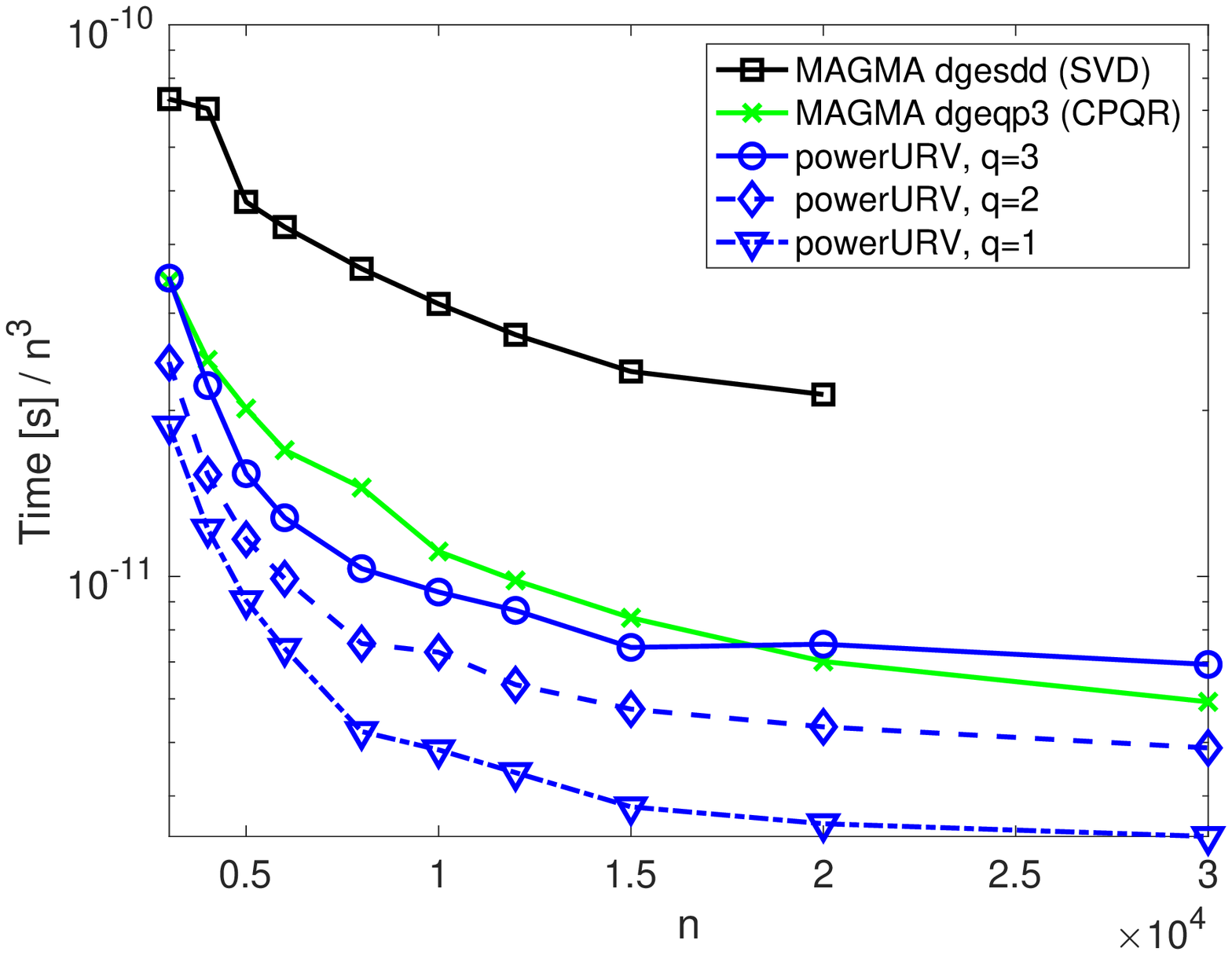}
\includegraphics[width=0.48\textwidth]{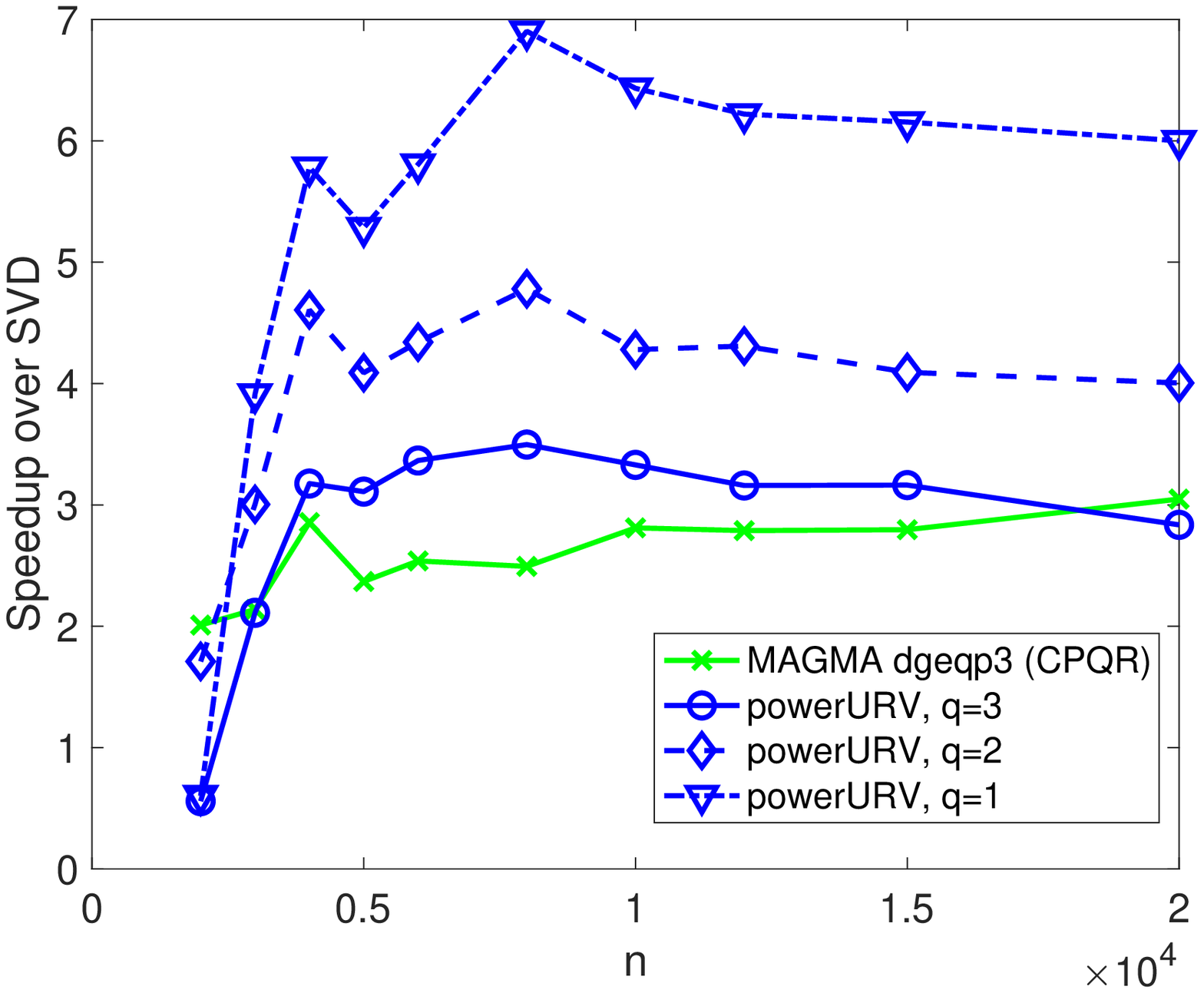}
\caption{{(Left) computation times for the SVD,  \textsc{HQRCP} and \textsc{powerURV}  on the GPU. (Right) speedups of the \textsc{HQRCP} and \textsc{powerURV} over the SVD.}}
\label{fig:ch5-powurv-times}
\end{figure}



\begin{figure}[h]
\includegraphics[width=0.48\textwidth]{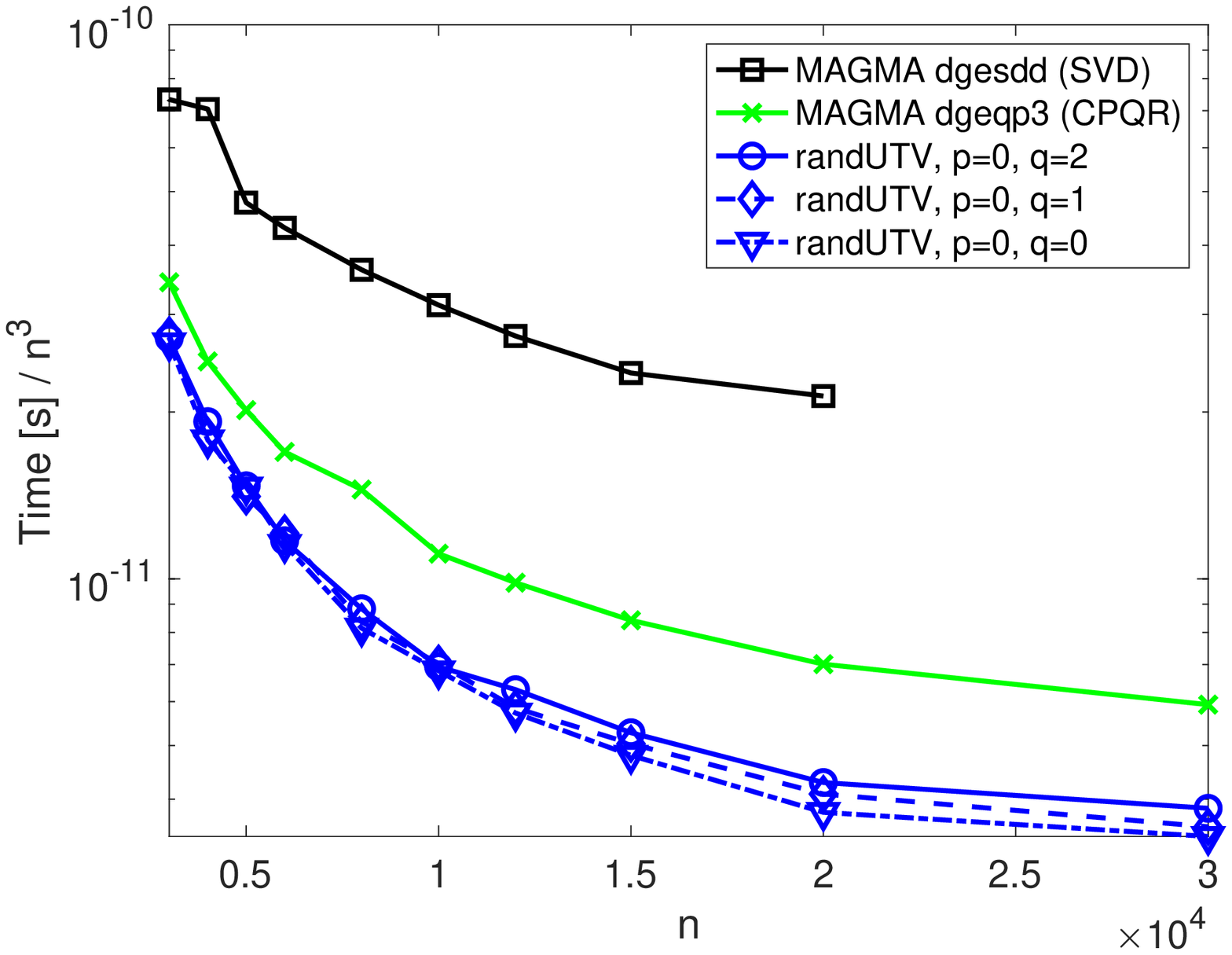}
\includegraphics[width=0.48\textwidth]{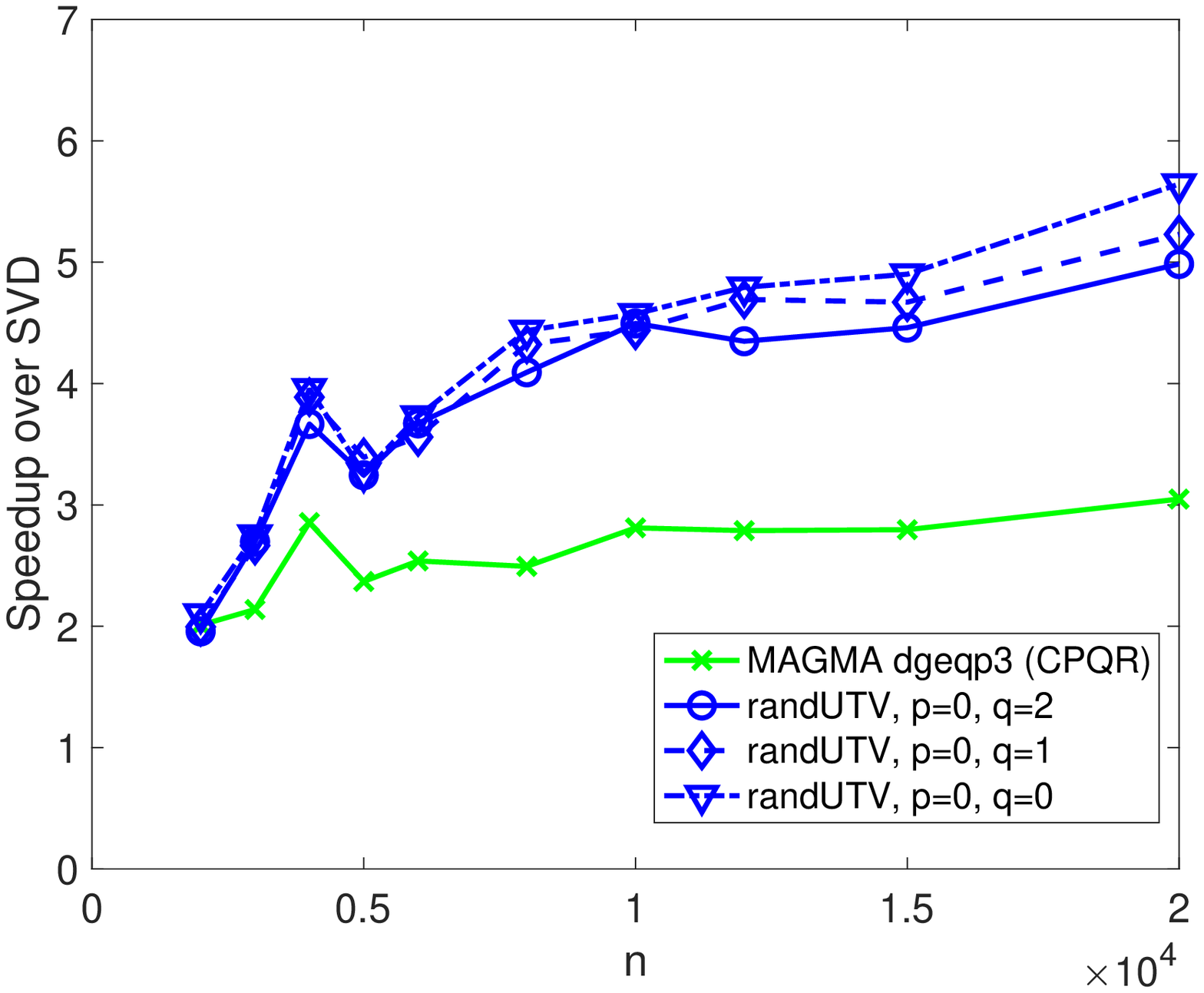}
\caption{{(Left) computation times for the \textsc{randUTV} without oversampling ($b=128$) plotted against the computation time for \textsc{HQRCP} and the SVD on the GPU. (Right) Speedups of the \textsc{randUTV} without oversampling and the \textsc{HQRCP} over the SVD.}}
\label{fig:ch5-randutv-basic}
\end{figure}

\begin{figure}[h]
\includegraphics[width=0.48\textwidth]{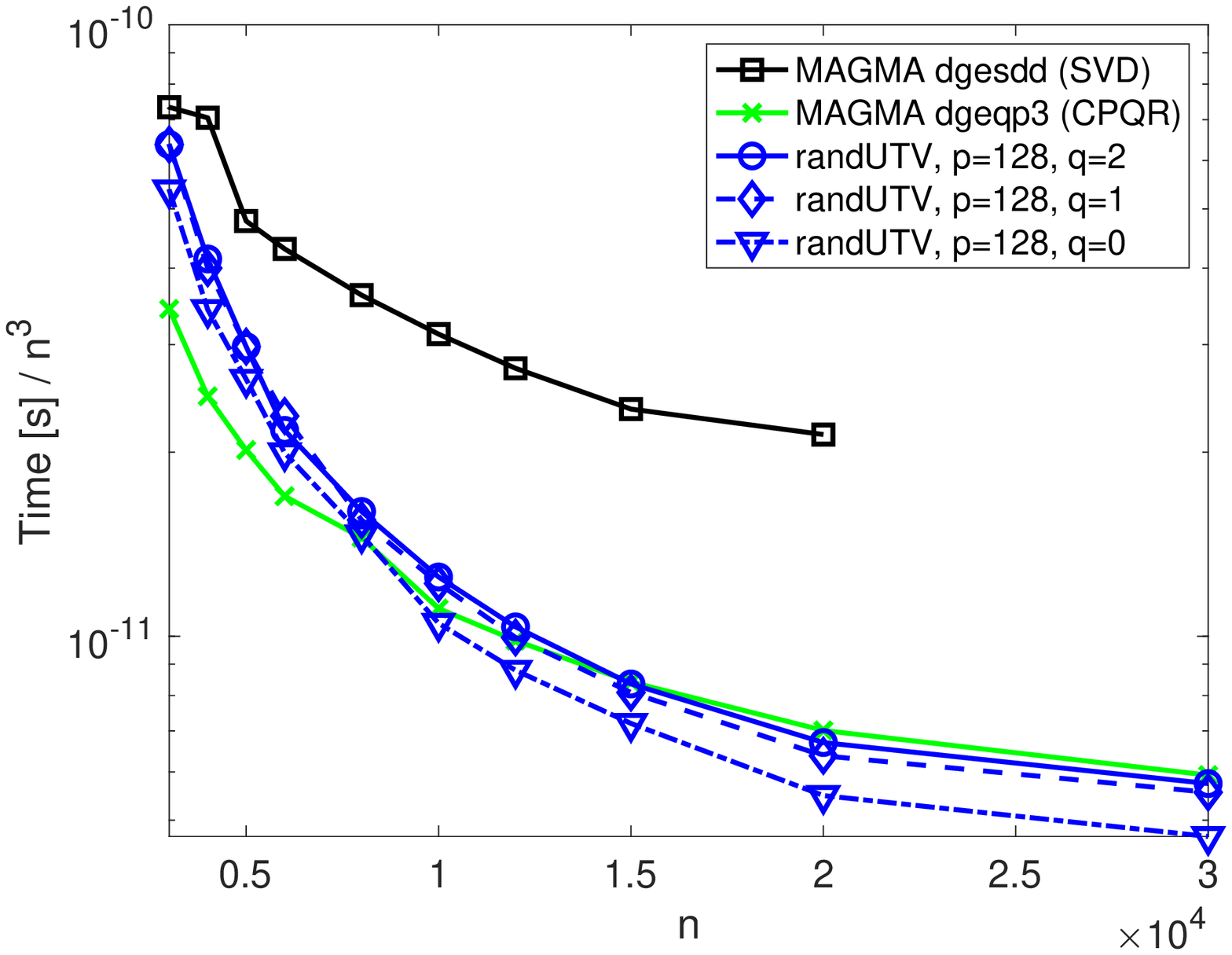}
\includegraphics[width=0.48\textwidth]{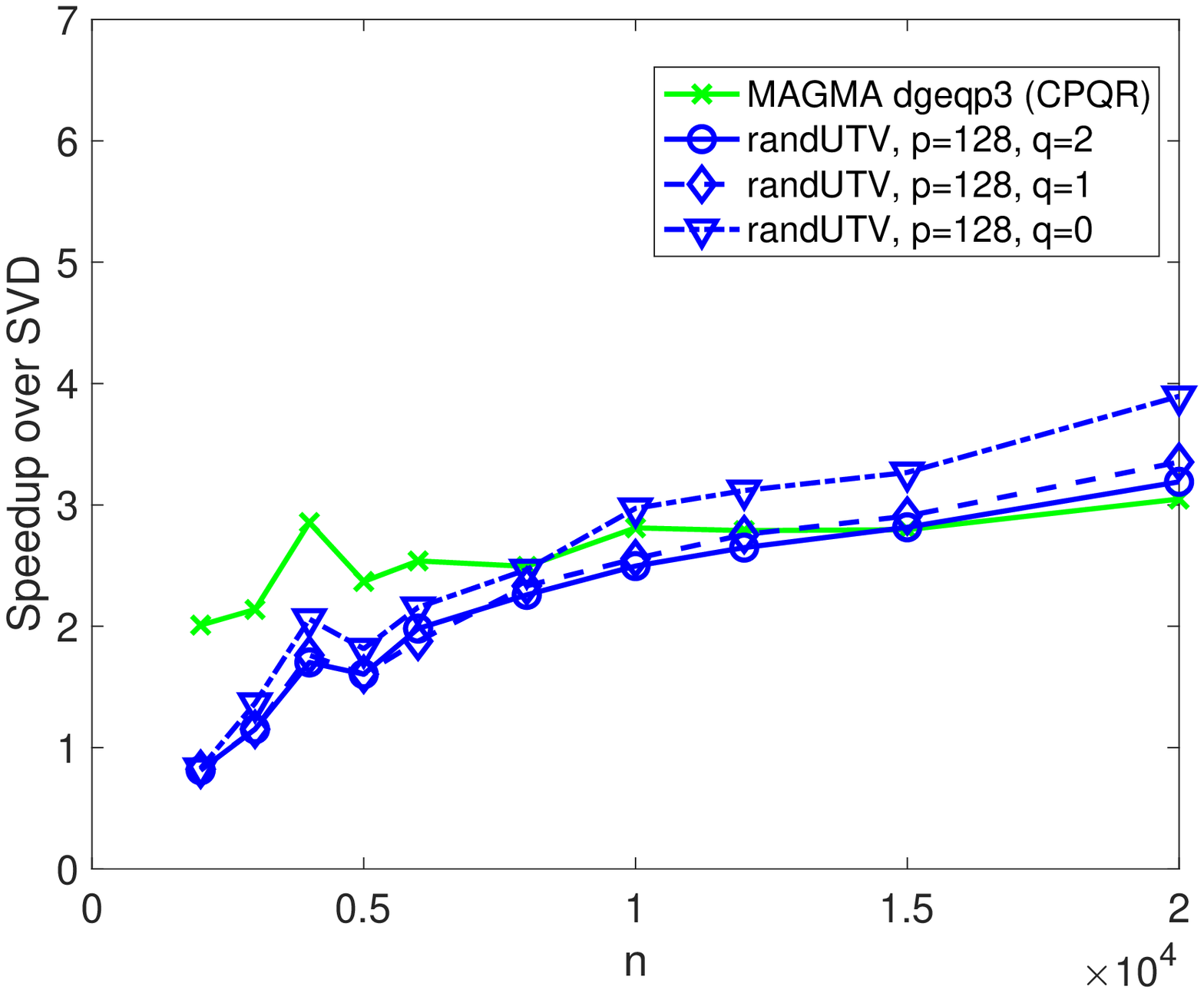}
\caption{{(Left) computation times for the \textsc{randUTV} with oversampling ($b=128$) plotted against the computation time for \textsc{HQRCP} and the SVD on the GPU. (Right) Speedups of the \textsc{randUTV} with oversampling and the \textsc{HQRCP} over the SVD.}}
\label{fig:ch5-randutv-boosted}
\end{figure}

{We observe in Figure \ref{fig:ch5-powurv-times} that \textsc{powerURV} with $q=1,2$ power iterations consistently outperforms \textsc{HQRCP}. \textsc{powerURV} with $q=3$ power iterations delivers similar performance with  \textsc{HQRCP} for large matrices, but arrives at the asymptotic region much faster. As expected, the SVD is much slower than the other two methods.}

We observe in Figure \ref{fig:ch5-randutv-basic} that \textsc{randutv} without {oversampling} handily outperforms \textsc{HQRCP}. The cost of increasing the parameter $q$ is also quite small due to the high efficiency of matrix multiplication on the GPU.

We observe in Figure \ref{fig:ch5-randutv-boosted} that \textsc{randutv} with {oversampling} still outperforms \textsc{HQRCP} when $n\ge 15\,000$. In addition, observe that the distance between the lines for $q=2$ and $q=1$ is less than the distance between the lines for $q=0$ and $q=1$. This difference is representative of the savings gained with bootstrapping technique whereby extra samples from one iteration are carried over to the next iteration.


{To summarize, our results show that the two newly proposed algorithms (\textsc{powerURV} and \textsc{randutv}) both achieved clear speedups over the SVD. They are also faster than \textsc{HQRCP} for sufficiently large matrices.}

\subsection{Approximation error}
\label{sec:ch5-errors}

{In this section, we compare the errors in the low-rank approximations} produced by SVD,  \textsc{HQRCP}, \textsc{powerURV}, and \textsc{randUTV}. Given an matrix $\mtx{A} \in \mathbb{R}^{n \times n}$, each rank-revealing factorization  produces a decomposition
\[
\mtx{A} = \mtx{U} \mtx{T} \mtx{V}^*,
\]
where $\mtx{U} \in \mathbb{R}^{n \times n}$ and $\mtx{V} \in \mathbb{R}^{n \times n}$ are orthogonal, and $\mtx{T} \in \mathbb{R}^{n \times n}$ is upper triangular. Given this factorization, a natural rank-$k$ approximation to $\mtx{A}$ is
\begin{equation}
\mtx{A}_k = \mtx{U}(:,1:k) \mtx{T}(1:k,:) \mtx{V}^*.
\end{equation}
Recall in Section \ref{sec:ch5-svd} that the rank-$k$ approximation produced by the SVD is the optimal among all rank-$k$ approximations, so we denote it as $\mtx{A}_k^{\text{optimal}}$.
For each of the factorizations that we study, we evaluated the error
\begin{equation}
\label{eq:errork}
e_{k} = \| \mtx{A} - \mtx{A}_k \|
\end{equation}
{as a function of $k$, and report the results in Figures \ref{fig:error} and \ref{fig:error_n4000}. Three different test matrices are considered:}


\begin{itemize}
\item \textbf{Fast decay:} This matrix is generated by first creating random orthogonal matrices $\mtx{U}$ and $\mtx{V}$ by performing unpivoted QR factorizations on two random matrices with i.i.d~entries drawn according to the standard normal distribution. Then, $\mtx{A}_{\text{fast}}$ is computed with $\mtx{A}_{\text{fast}} = \mtx{U} \mtx{D} \mtx{V}^*$, where $\mtx{D}$ is a diagonal matrix with diagonal entries $d_{ii} = \beta^{(i-1)/(n-1)}$, where $\beta = 10^{-5}$.
\item \textbf{S-shaped decay:} This matrix is generated in the same manner as ``fast decay,'' but the diagonal entries are chosen to first hover around $1$, then quickly decay before leveling out at $10^{-2}$, as shown in Figure \ref{fig:sshape}.
\item \textbf{BIE:} This matrix is the result of discretizing a \emph{boundary integral equation} (BIE) defined on a smooth closed curve in the plane. To be precise, we discretized the so called ``single layer'' operator associated with the Laplace equation using a sixth order quadrature rule designed by Alpert \cite{1999_alpert_hybrid}. This operator is well-known to be ill-conditioned, which necessitates the use of a rank-revealing factorization in order to solve the corresponding linear system in as stable a manner as possible.
\end{itemize}

\begin{figure}
\centering
\begin{subfigure}{0.48\textwidth}
\centering
\includegraphics[width=\textwidth]{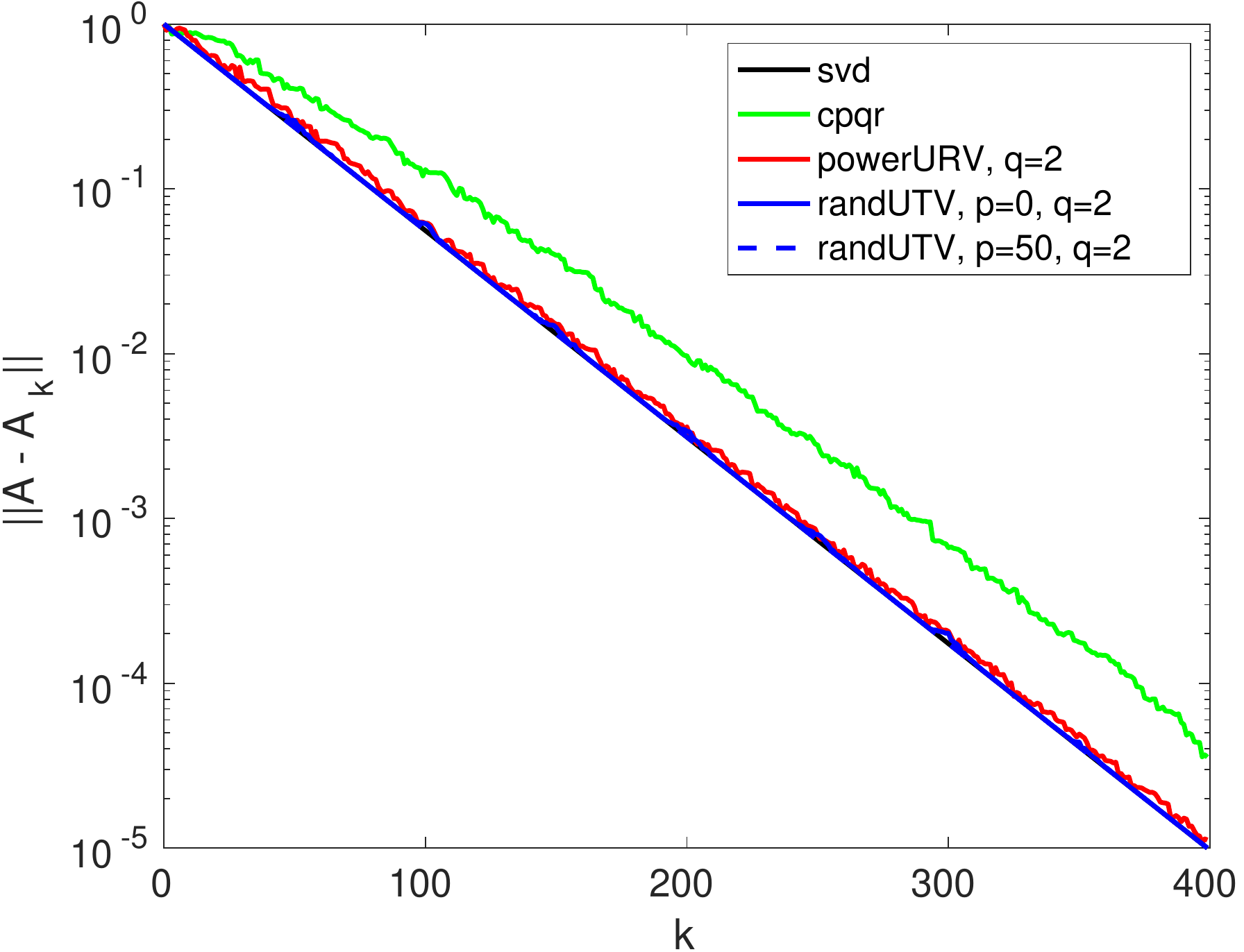}
\caption{``fast decay'', operator norm} \label{fig:fast}
\end{subfigure}
\begin{subfigure}{0.48\textwidth}
\centering
\includegraphics[width=\textwidth]{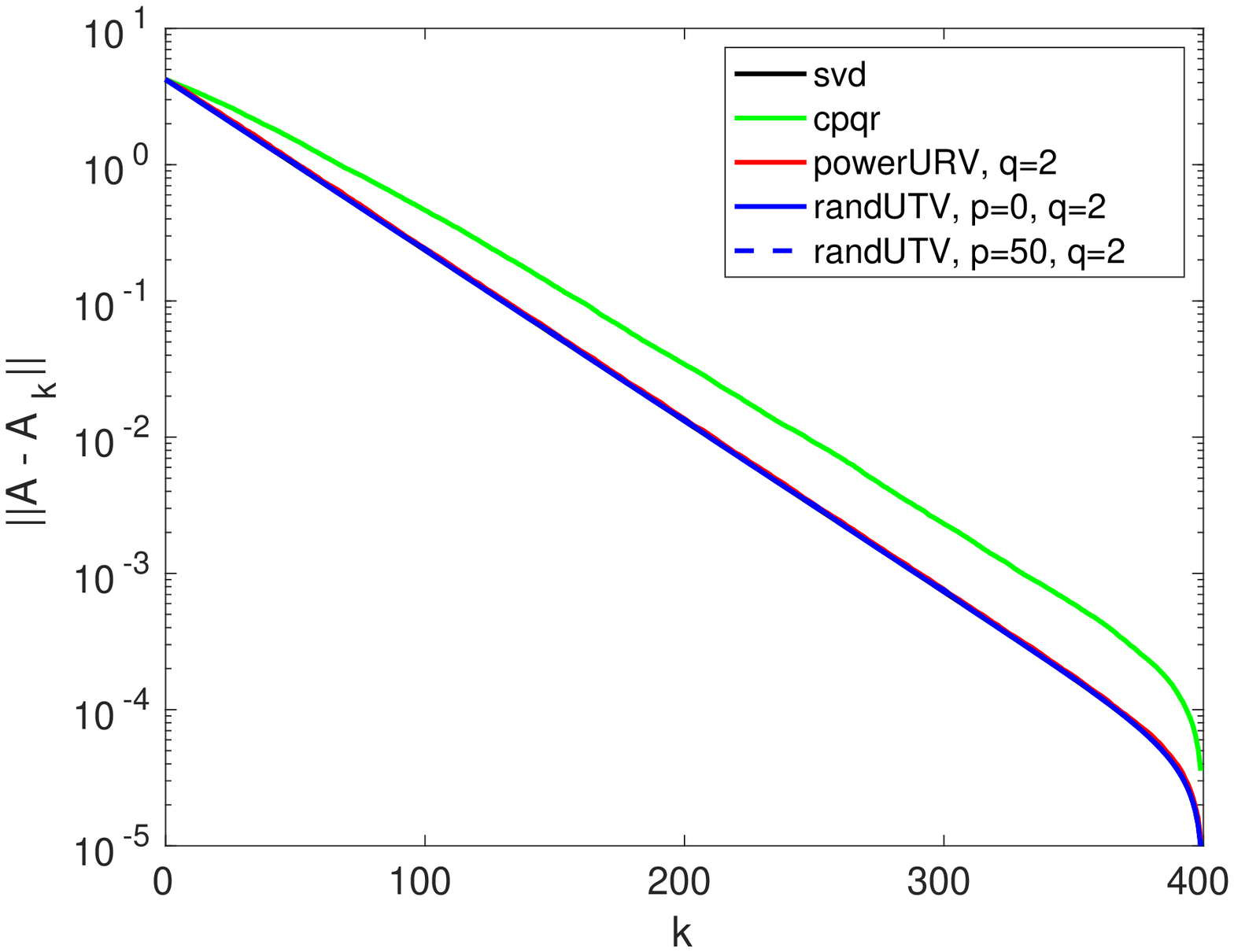}
\caption{``fast decay'', Frobenius norm}
\end{subfigure}
\begin{subfigure}{0.48\textwidth}
\centering
\includegraphics[width=\textwidth]{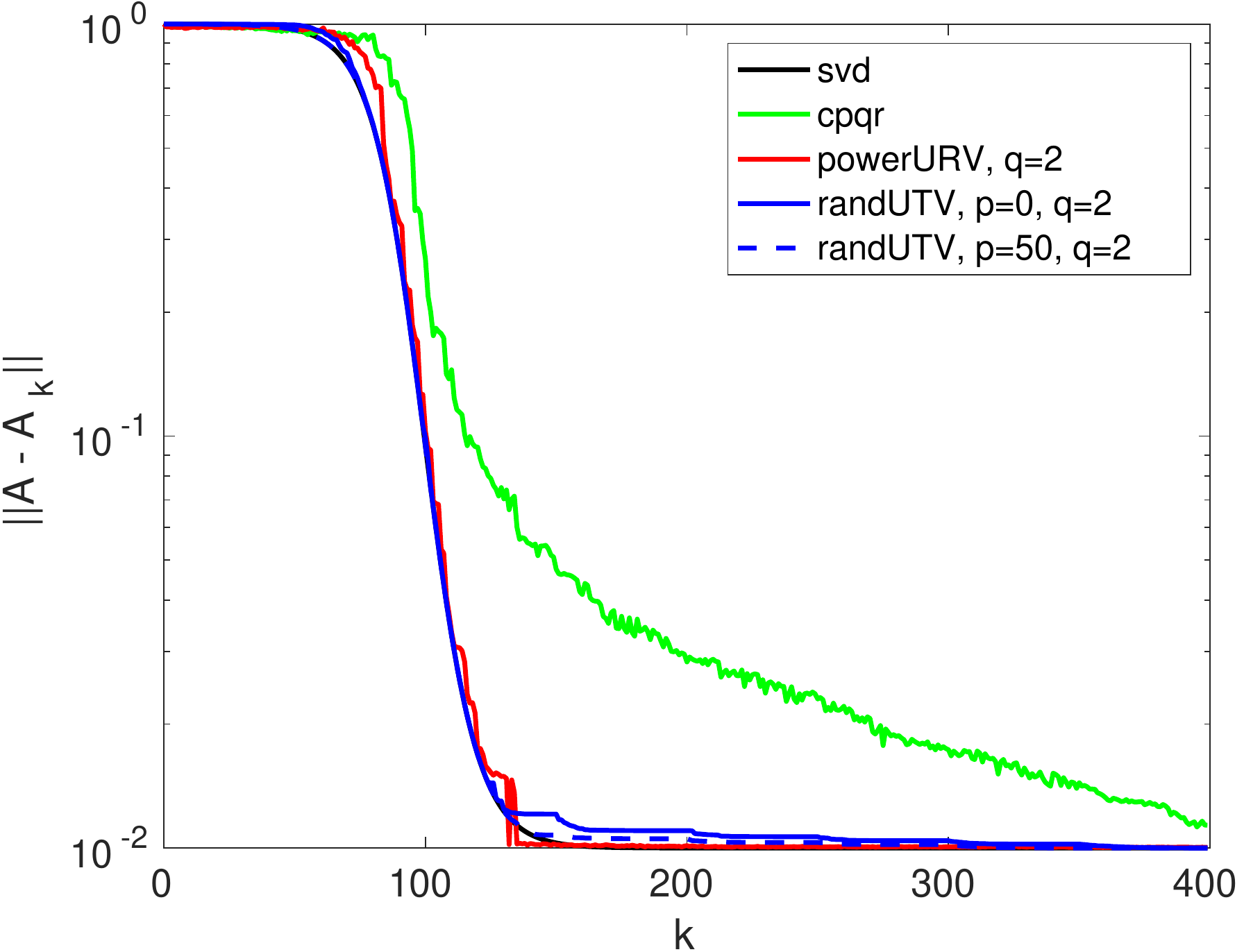}
\caption{``S-shaped decay'', operator norm}
\label{fig:sshape}
\end{subfigure}
\begin{subfigure}{0.48\textwidth}
\centering
\includegraphics[width=\textwidth]{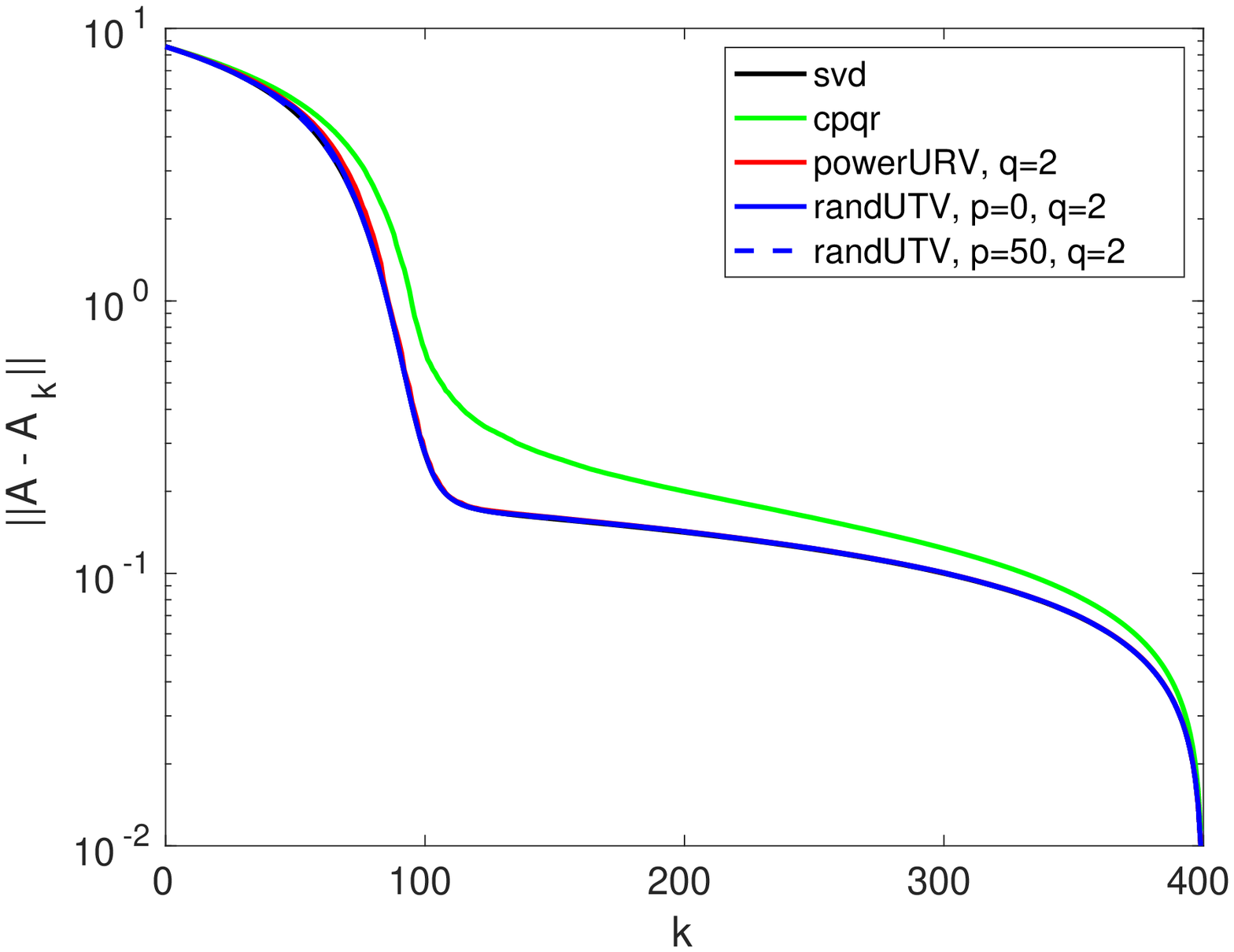}
\caption{``S-shaped decay'', Frobenius norm}
\end{subfigure}
\begin{subfigure}{0.48\textwidth}
\centering
\includegraphics[width=\textwidth]{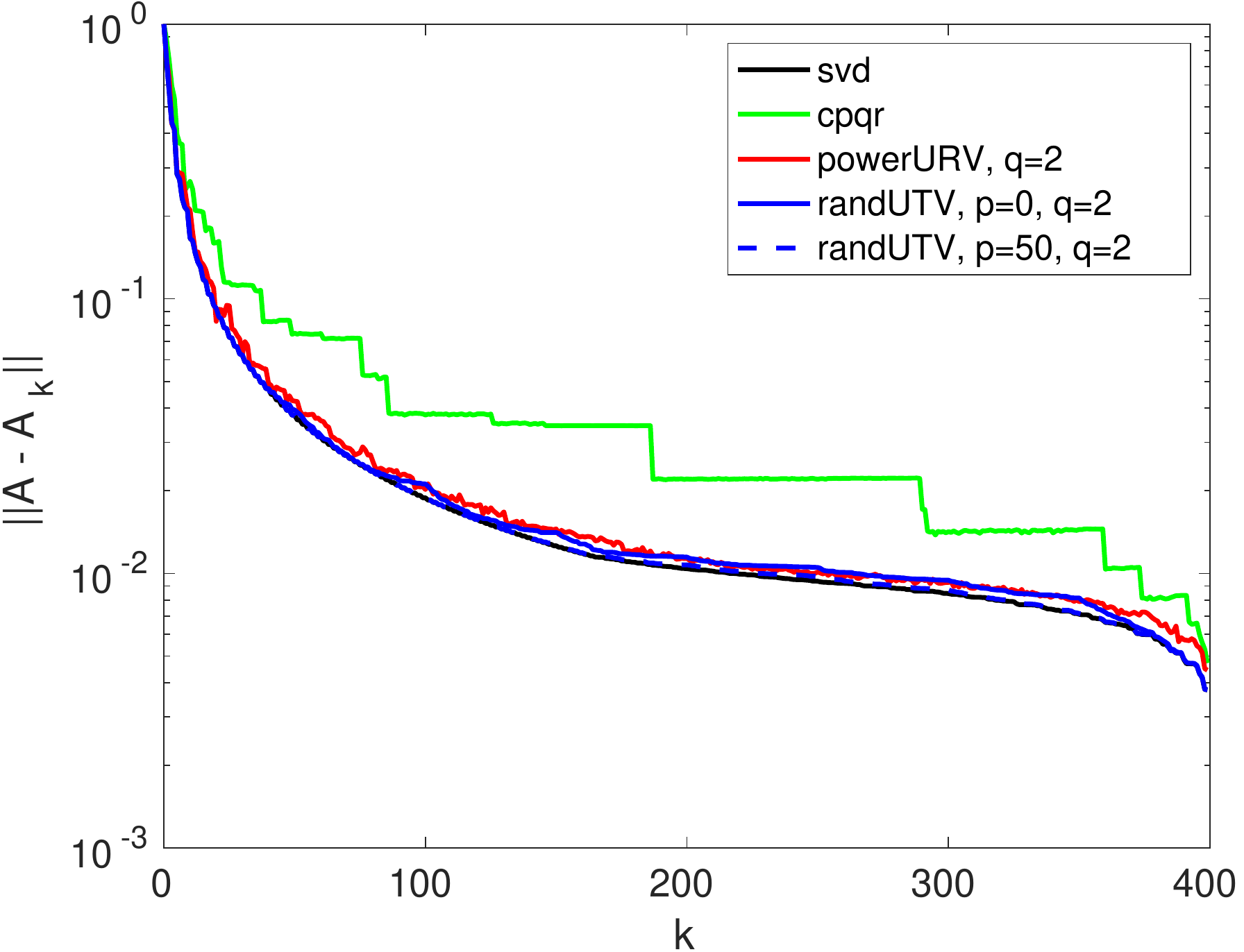}
\caption{``BIE'', operator norm}
\end{subfigure}
\begin{subfigure}{0.48\textwidth}
\centering
\includegraphics[width=\textwidth]{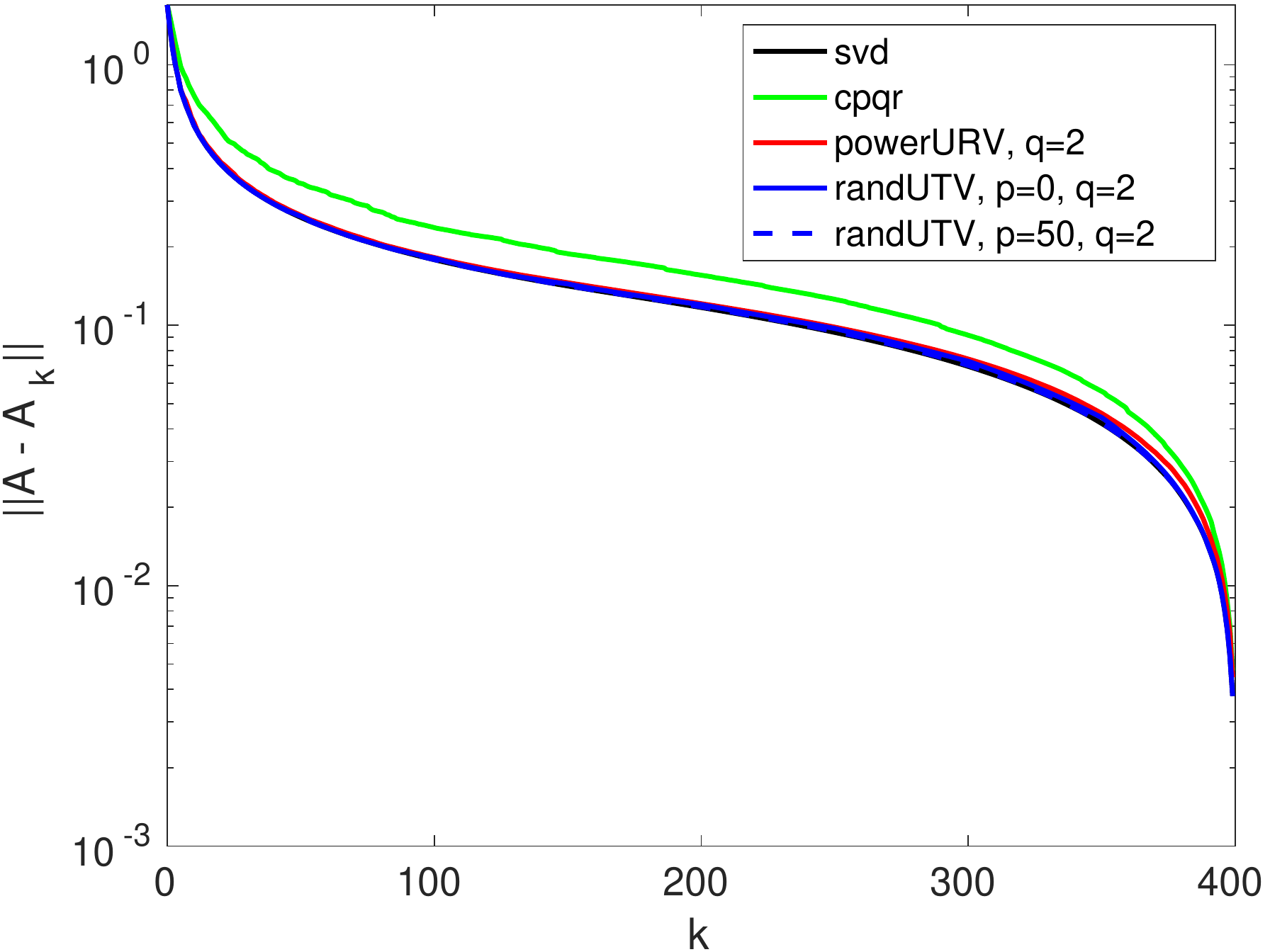}
\caption{``BIE'', Frobenius norm}
\end{subfigure}
\caption{Errors in low-rank approximations for  matrices ``fast decay'', ``S-shaped decay'', and ``BIE'' of size $n=400$. For the \textsc{randUTV} factorizations, the block size was $b=50$. The x-axis is  the rank of corresponding approximations.}
\label{fig:error}
\end{figure}

\begin{figure}
\centering
\begin{subfigure}{0.48\textwidth}
\centering
\includegraphics[width=\textwidth]{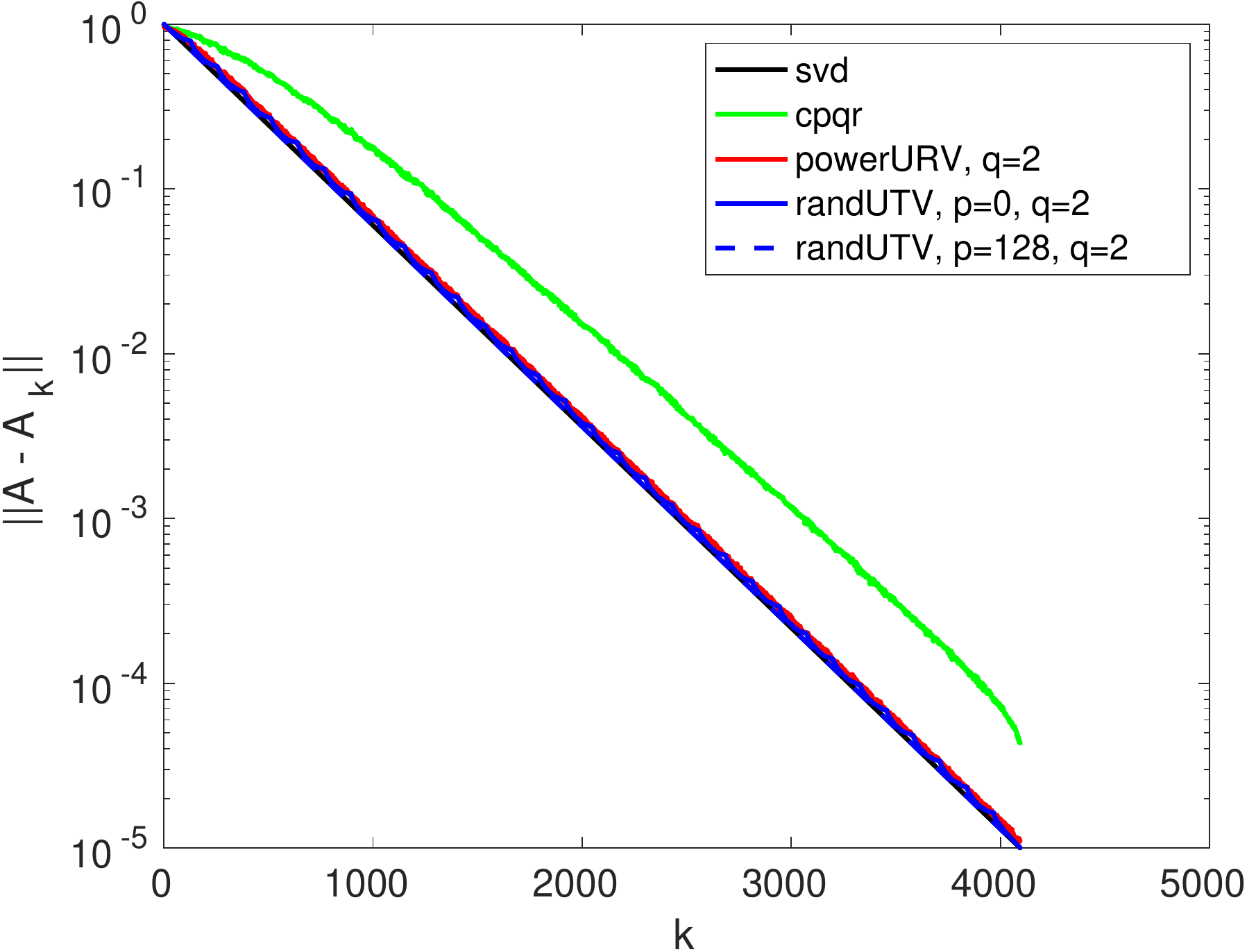}
\caption{``fast decay'', operator norm}
\end{subfigure}
\begin{subfigure}{0.48\textwidth}
\centering
\includegraphics[width=\textwidth]{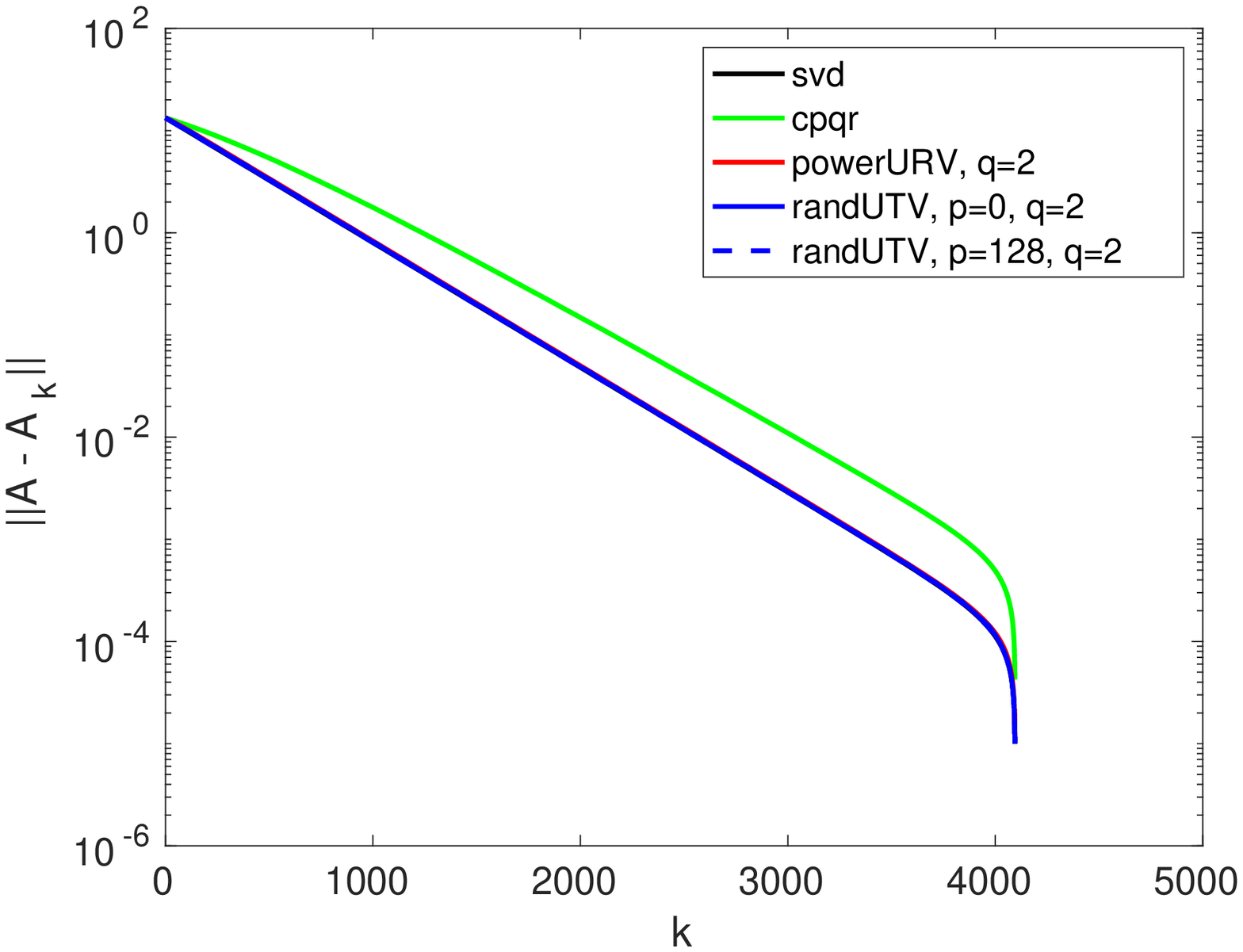}
\caption{``fast decay'', Frobenius norm}
\end{subfigure}
\begin{subfigure}{0.48\textwidth}
\centering
\includegraphics[width=\textwidth]{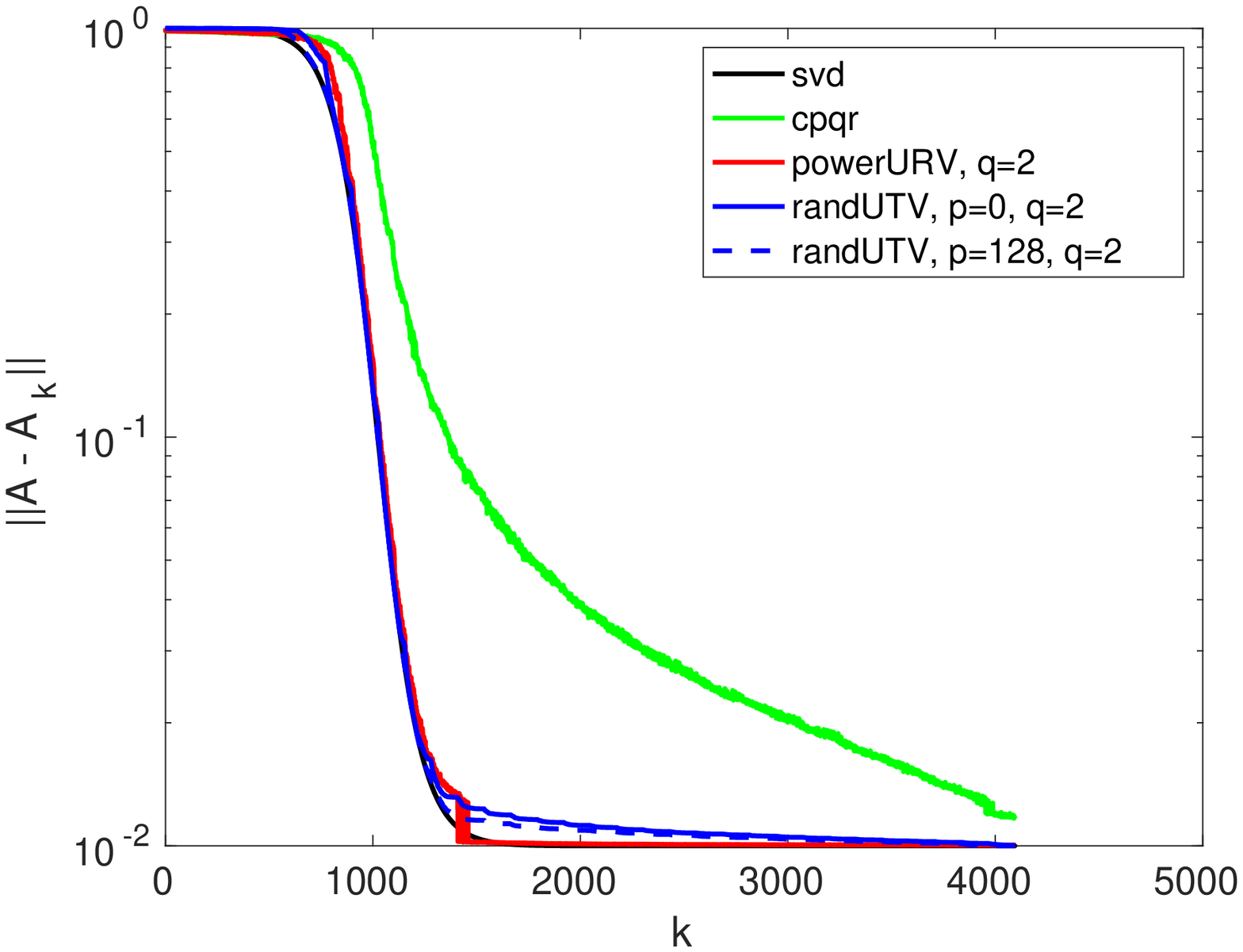}
\caption{``S-shaped decay'', operator norm}
\end{subfigure}
\begin{subfigure}{0.48\textwidth}
\centering
\includegraphics[width=\textwidth]{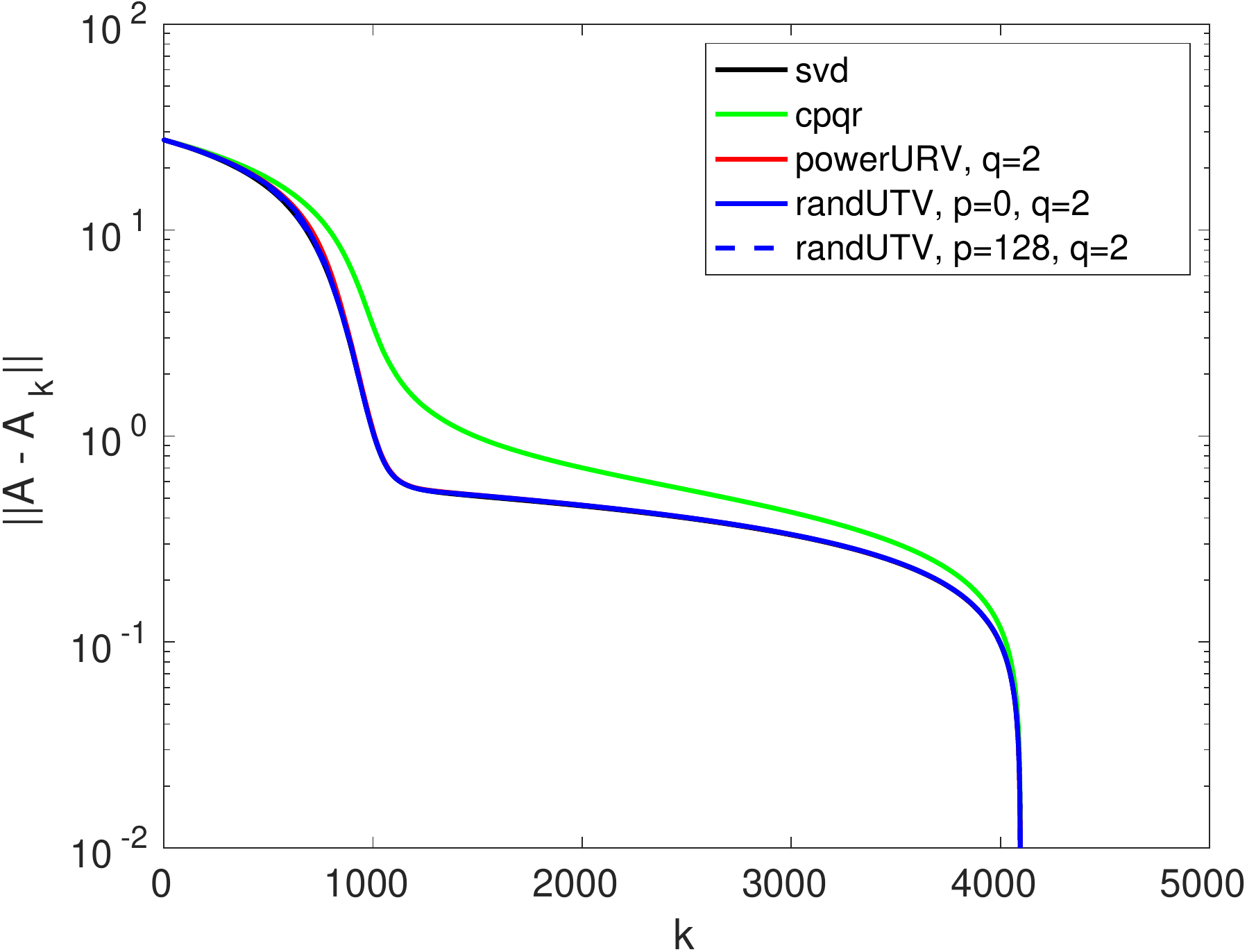}
\caption{``S-shaped decay'', Frobenius norm}
\end{subfigure}
\begin{subfigure}{0.48\textwidth}
\centering
\includegraphics[width=\textwidth]{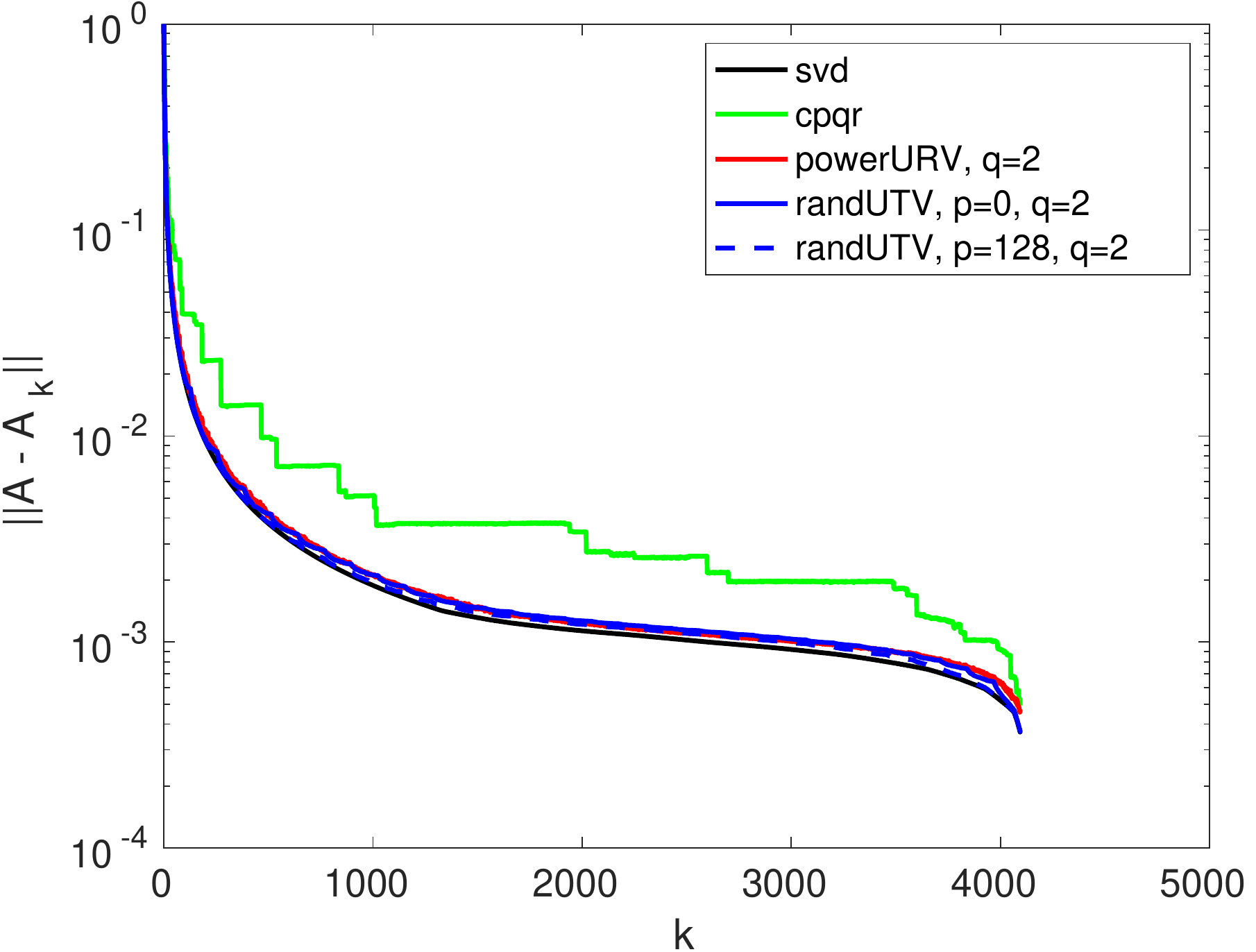}
\caption{``BIE'', operator norm}
\end{subfigure}
\begin{subfigure}{0.48\textwidth}
\centering
\includegraphics[width=\textwidth]{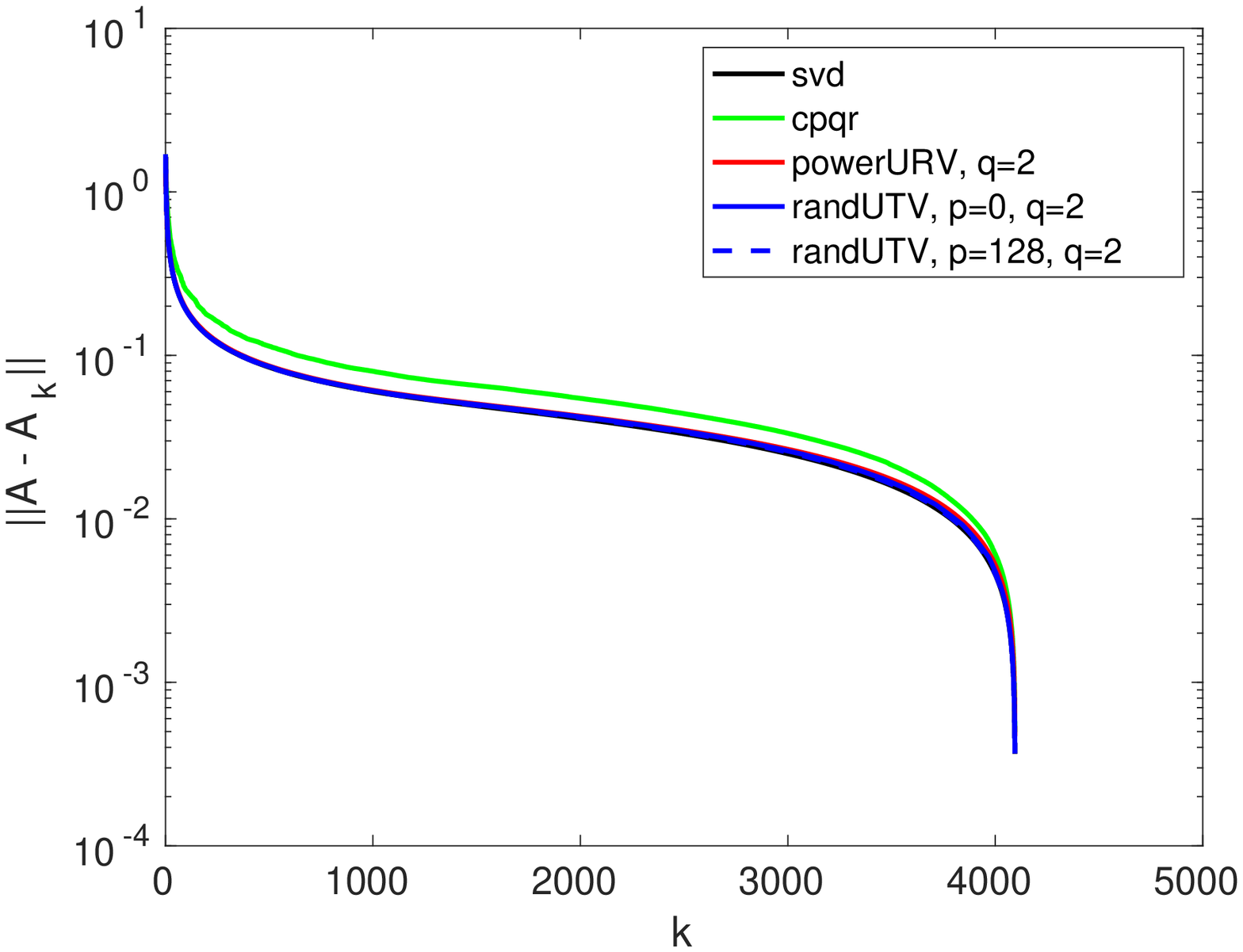}
\caption{``BIE'', Frobenius norm}
\end{subfigure}
\caption{Errors in low-rank approximations for matrices ``fast decay'', ``S-shaped decay'', and ``BIE'' of size $\emph{n=4\,000}$. For the \textsc{randUTV} factorizations, the block size was $b=128$ (as used in numerical experiments in Section~\ref{sec:ch5-speed}). The x-axis is the rank of corresponding approximations.}
\label{fig:error_n4000}
\end{figure}

\begin{figure}
\centering
\begin{subfigure}{0.48\textwidth}
\centering
\includegraphics[width=\textwidth]{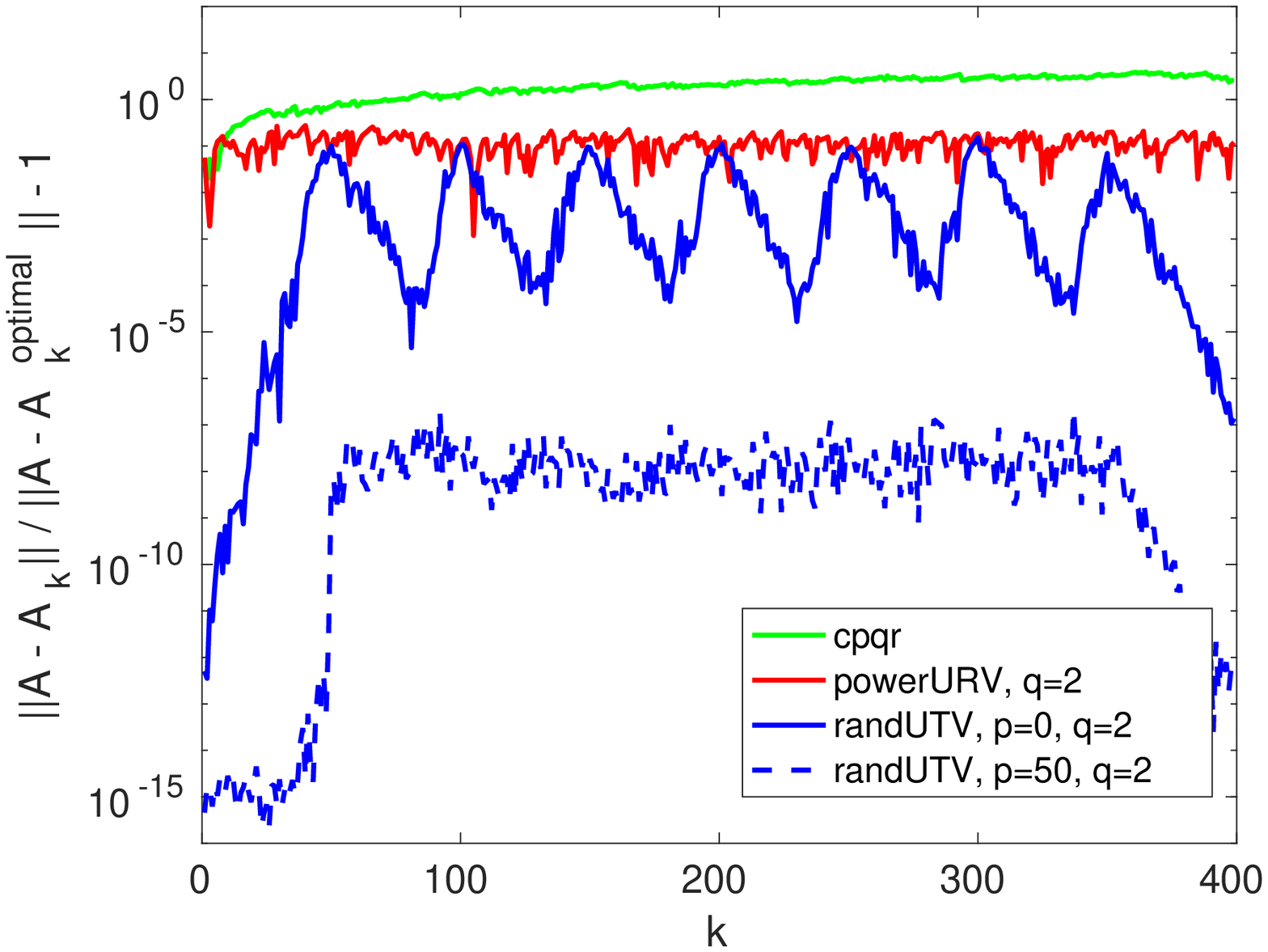}
\caption{``fast decay'', operator norm} \label{fig:relerror_fast} 
\end{subfigure}
\begin{subfigure}{0.48\textwidth}
\centering
\includegraphics[width=\textwidth]{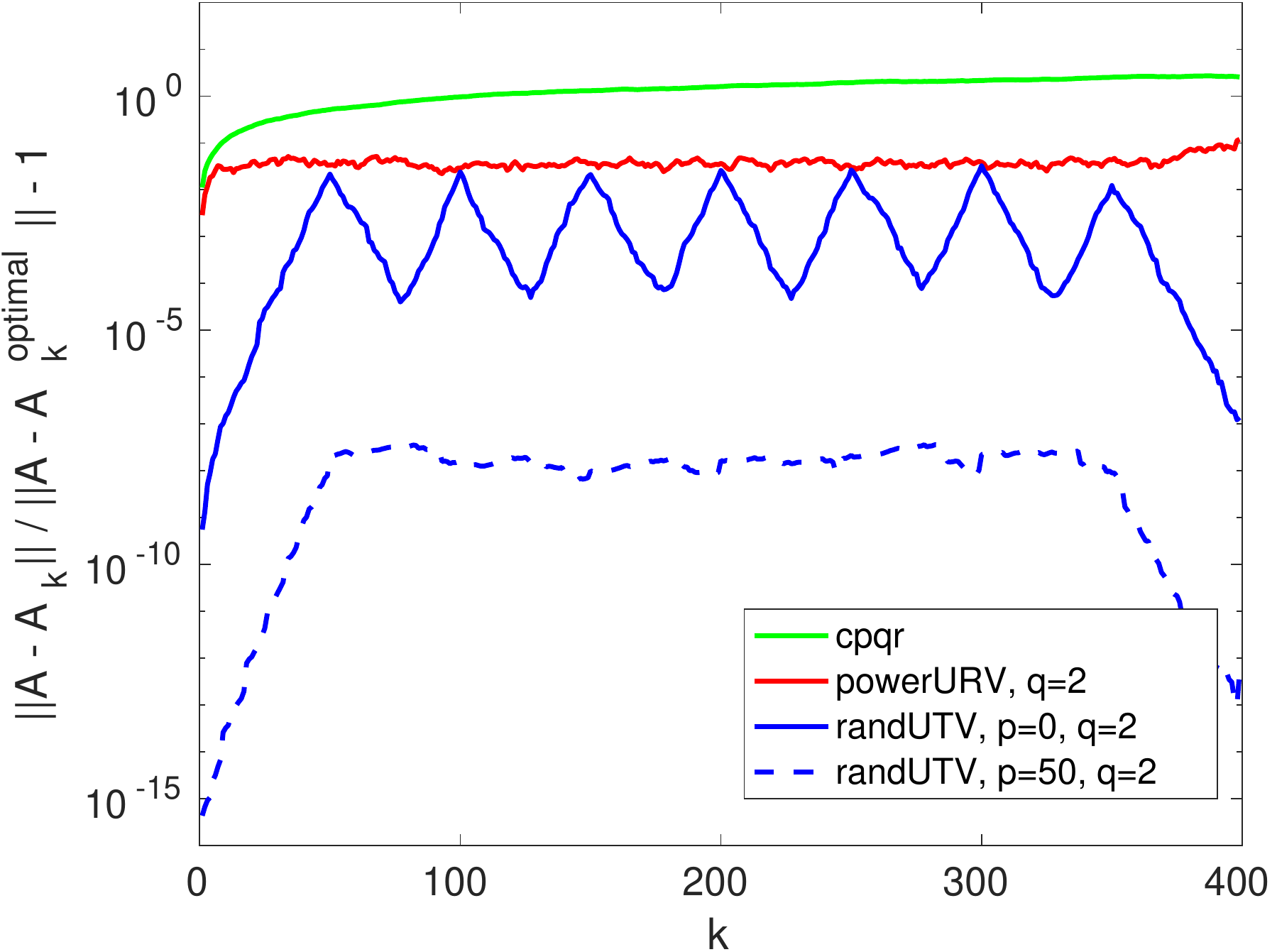}
\caption{``fast decay'', Frobenius norm}
\end{subfigure}
\begin{subfigure}{0.48\textwidth}
\centering
\includegraphics[width=\textwidth]{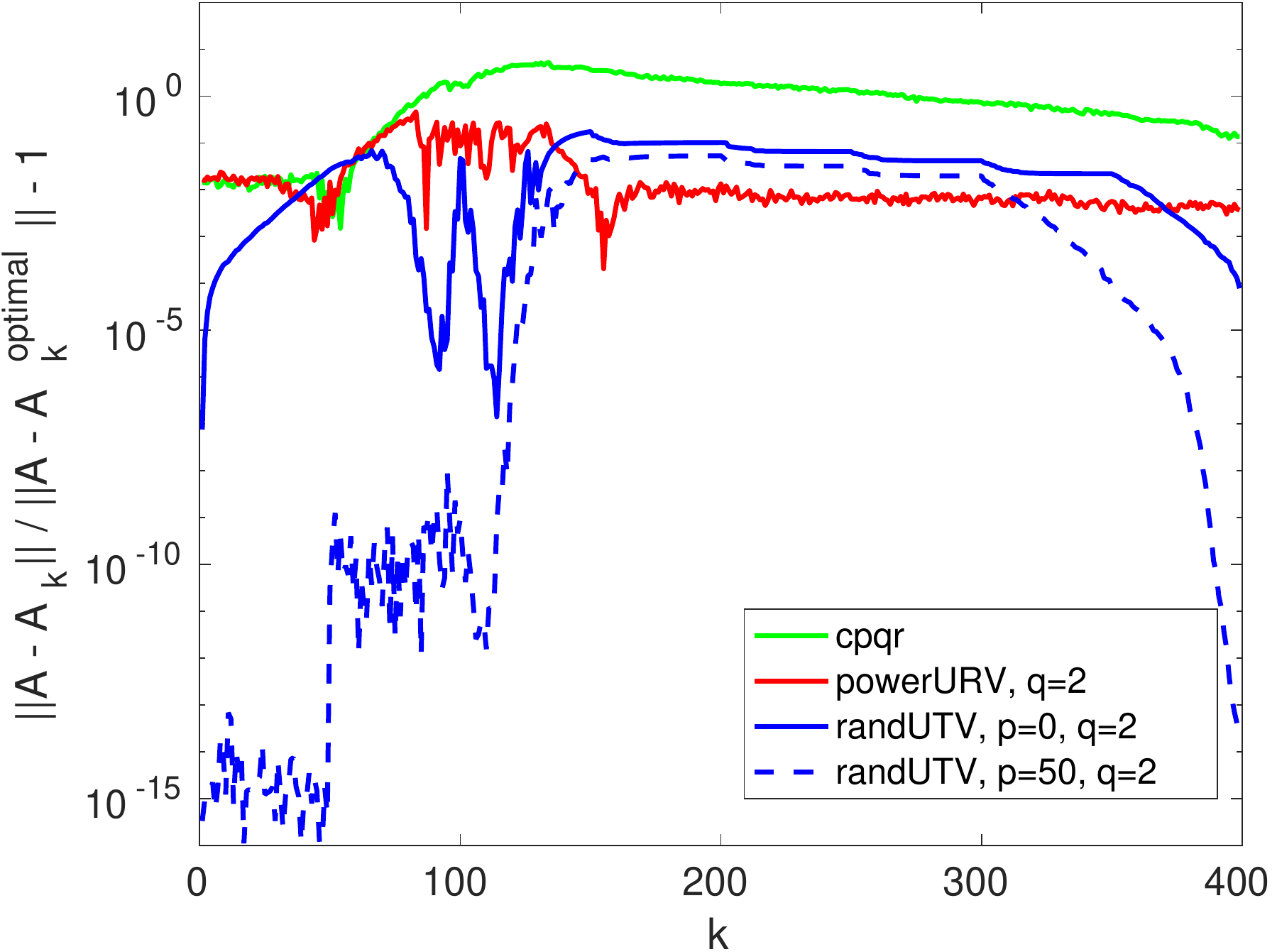}
\caption{``S-shaped decay'', operator norm}
\end{subfigure}
\begin{subfigure}{0.48\textwidth}
\centering
\includegraphics[width=\textwidth]{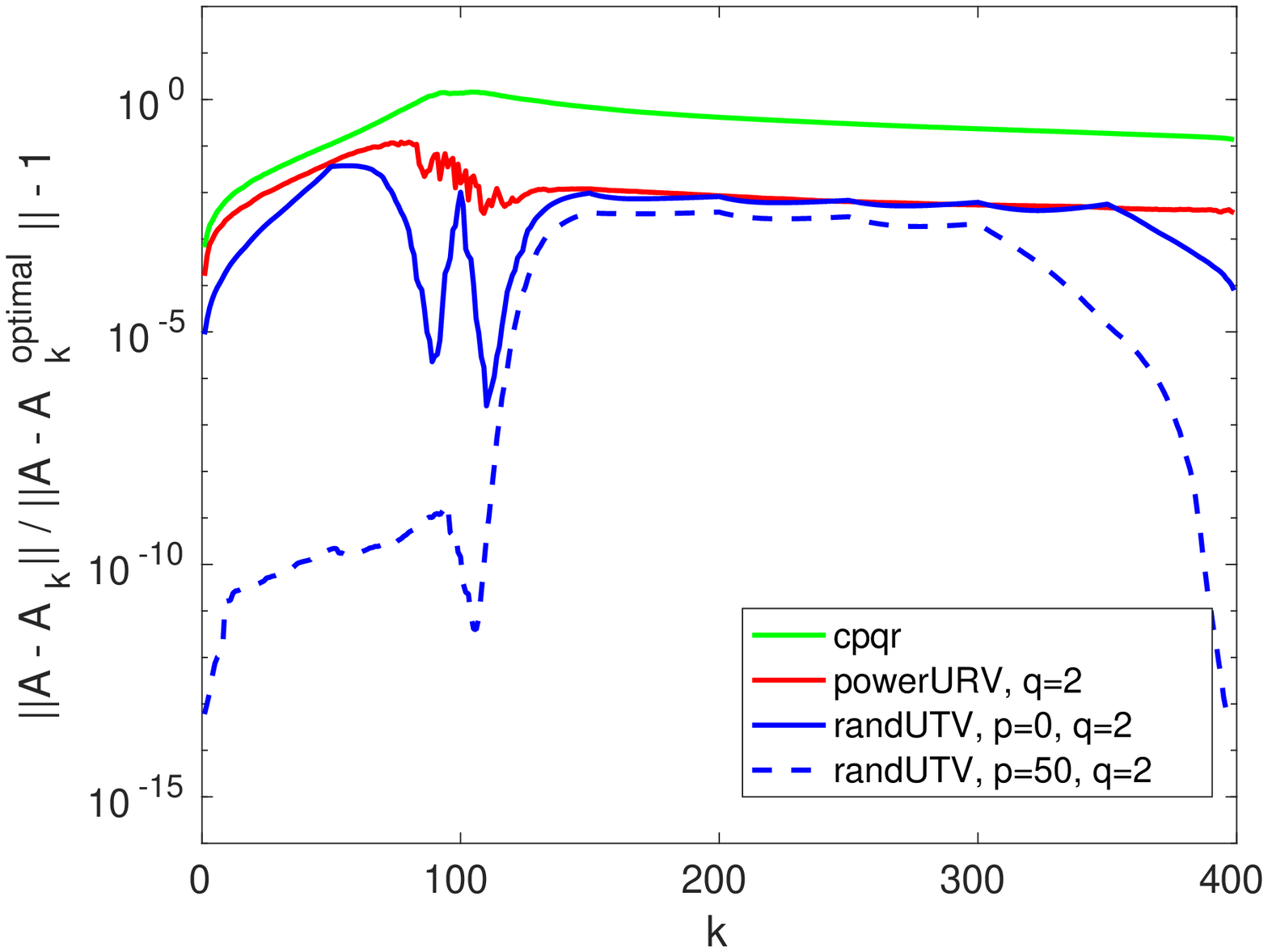}
\caption{``S-shaped decay'', Frobenius norm}
\end{subfigure}
\begin{subfigure}{0.48\textwidth}
\centering
\includegraphics[width=\textwidth]{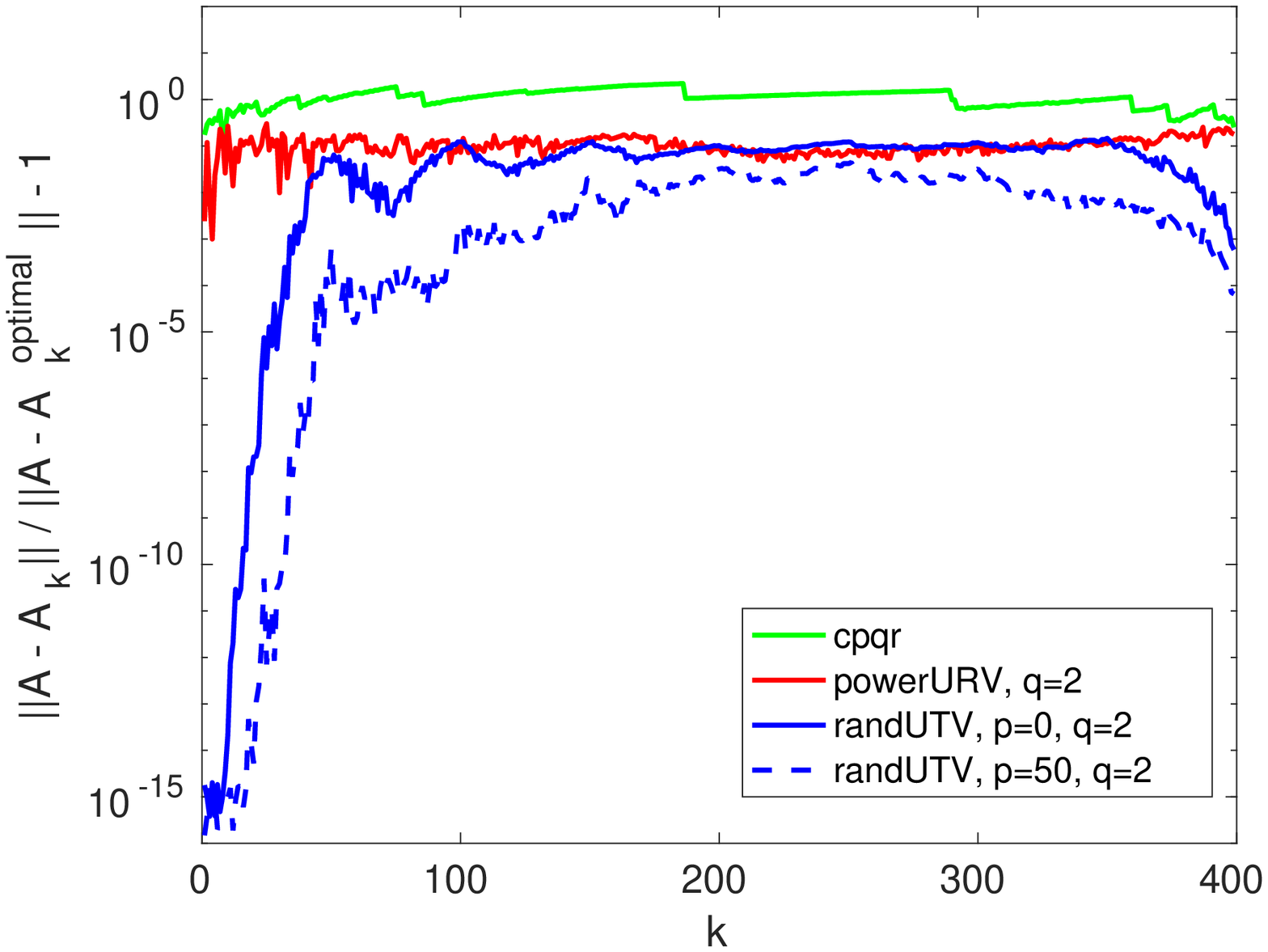}
\caption{``BIE'', operator norm}
\end{subfigure}
\begin{subfigure}{0.48\textwidth}
\centering
\includegraphics[width=\textwidth]{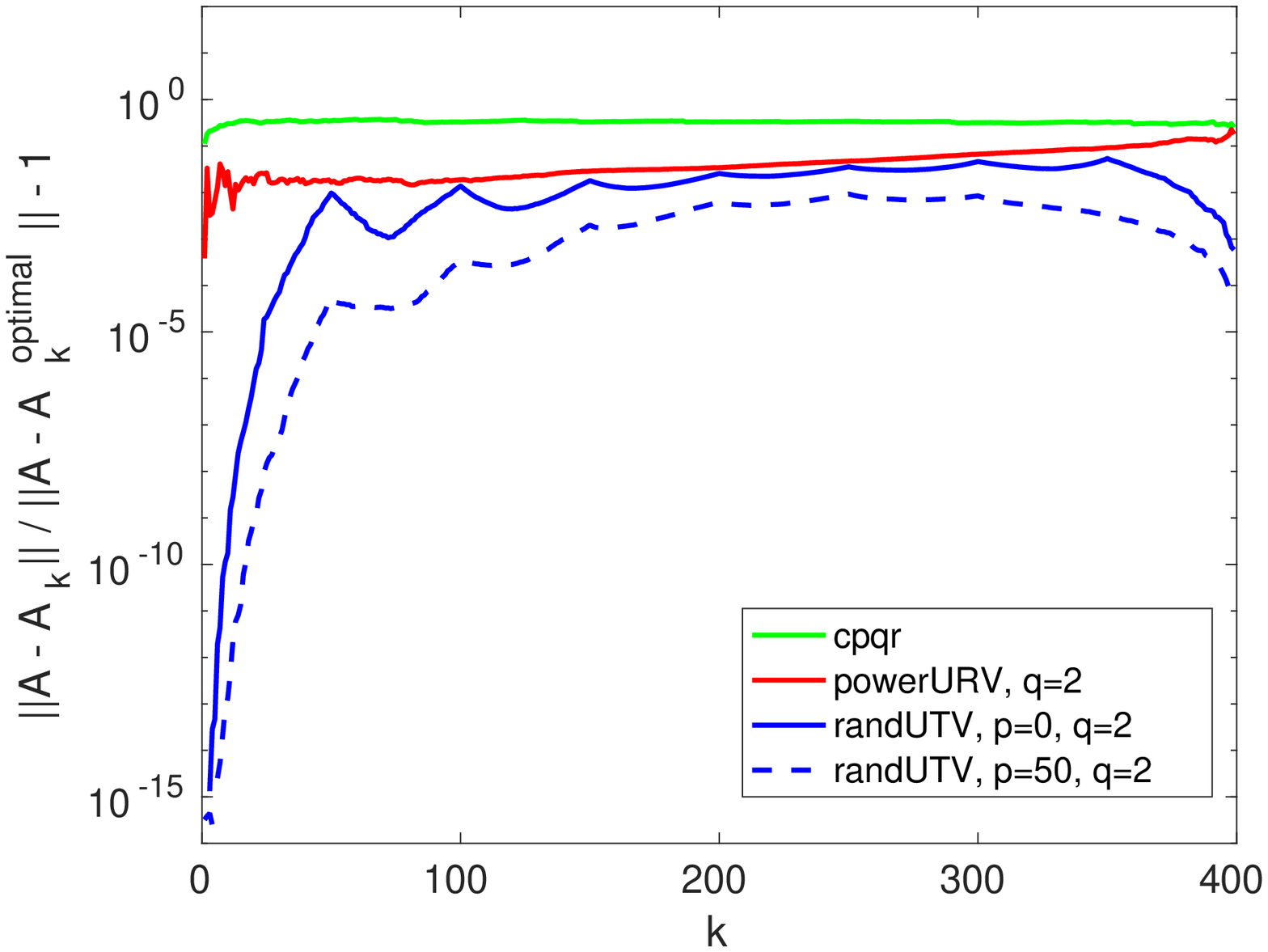}
\caption{``BIE'', Frobenius norm}
\end{subfigure}
\caption{Relative errors in low-rank approximations for  matrices ``fast decay'', ``S-shaped decay'', and ``BIE'' of size $n=400$. For the \textsc{randUTV} factorizations, the block size was $b=50$. The x-axis is the rank of corresponding approximations.}
\label{fig:relerror}
\end{figure}

\begin{remark}
{Evaluating the error $e_{k}$ defined in (\ref{eq:errork}) for all $k=1,2,\ldots,n$ requires as much as $O(n^3)$ work, which in practice took significantly longer than computing the rank revealing factorizations. Figure~\ref{fig:error_n4000} shows results corresponding to relatively large matrices of size $n=4\,000$, which look similar to those in Figure~\ref{fig:error}.}
\end{remark}

\begin{remark}
{
As a curiosity, let us note that we have empirically observed that \textsc{randUTV} does surprisingly
well at controlling \textit{relative} errors. Figure \ref{fig:relerror} reports the error metric
$$
\frac{\| \mtx{A} - \mtx{A}_k \|}{ \| \mtx{A} - \mtx{A}_k^{\text{optimal}} \|} - 1 =
\frac{\| \mtx{A} - \mtx{A}_k \| - \| \mtx{A} - \mtx{A}_k^{\text{optimal}} \| }{ \| \mtx{A} - \mtx{A}_k^{\text{optimal}} \|} \ge 0
$$
for our three test matrices. We see that while CPQR does not manage to control this error, \textsc{randUTV}
does a decent job, in particular when the power iteration is incorporated.}
\end{remark}

We make three observations based on results in Figures \ref{fig:error}, \ref{fig:error_n4000}, and \ref{fig:relerror}. First, \textsc{powerURV} and \textsc{randUTV} {are more accurate than \textsc{HQRCP} for a given $k$.}
Errors from \textsc{powerURV} and \textsc{randUTV} are substantially smaller in almost all cases studied. Take Figure \ref{fig:fast} as an example. The errors from \textsc{HQRCP} (green lines) are far from the minimals (black lines) achieved by SVD, whereas the errors from \textsc{powerURV} and \textsc{randUTV} (red and blue lines) are much closer to the minimals. It is also obvious in Figure \ref{fig:relerror} that the normalized errors from \textsc{HQRCP} (green lines) are usually larger (higher) than those from \textsc{powerURV} and \textsc{randUTV} (red and blue lines).

Second, \textsc{randUTV} is better than  \textsc{powerURV} when the same steps of power iteration are used. For a large number of cases, errors from \textsc{randUTV} are significantly smaller. This is shown in  Figure~\ref{fig:error} that the blue lines (\textsc{randUTV}) lie in between the black lines (SVD) and the red lines (\textsc{powerURV}). It is more obvious in Figure~\ref{fig:relerror} that the blue lines (\textsc{randUTV}) are lower  than the red lines (\textsc{powerURV}).

Third, the new oversampling scheme in \textsc{randUTV} provides a  boost in accuracies of low rank approximations obtained, even when the singular values decay slowly. The accuracy improvement is most pronounced when the singular values decay fast as shown in Figure \ref{fig:relerror_fast}. The figure also shows that without oversampling in \textsc{randUTV}, the accuracies of  rank-$k$ approximations deteriorate when $k$ is approximately a multiple of the block size $b$. This phenomenon is known in~\cite{halko2011finding} and the reason of incorporating the oversampling scheme in \textsc{randUTV}.

\begin{remark}
The error $e_{k}$ defined in (\ref{eq:errork}) measures how well condition (b) in
Section \ref{ss:rrf} is satisfied. In the interest of space, we do not report
analogous metrics for condition (a). Generally speaking, the results for \textsc{powerURV} and \textsc{randUTV}
are similar, as reported in \cite{gopal2018powerURV,martinsson2019randutv}.
\end{remark}


\section{{Conclusions}}

The computation of a rank-revealing factorization has important applications in, \textit{e.g.}, low-rank approximation and subspace identification. While the SVD is theoretically optimal, computing the SVD of a matrix requires a significant amount of work that can not fully leverage modern parallel computing environments such as a GPU. For example, computing the SVD of an $15\,000$ by $15\, 000$ matrix took more than a minute (79.4 seconds) on an NVIDIA V100 GPU.

We have described two randomized algorithms---\textsc{powerURV} and \textsc{randUTV}---as  economical alternatives to the SVD. As we demonstrate through numerical experiments,  both methods are much faster than the SVD on a GPU since they are designed to fully leverage parallel communication-constrained hardwares, and they  provide close-to-optimal accuracy. The main features of the two new methods, respectively, include (1) \textsc{powerURV} has a simple formulation as shown in Algorithm~\ref{alg:powerURV-stable} that it can be implemented easily on a GPU (or other parallel architectures), and (2) \textsc{randUTV} is a blocked incremental method that can be used to compute partial factorizations efficiently.

Compared to the CPQR factorization, which is commonly used as an economical alternative to the SVD for low-rank approximation, the  new  methods---\textsc{powerURV} and \textsc{randUTV}---are much more accurate and are similar or better in terms of speed on a GPU. Between the two methods, the \textsc{randUTV} is more accurate and generally faster, especially when power iteration is used. The accuracy of the \textsc{randUTV} can be further improved through the described oversampling technique (Section~\ref{sec:ch5-oversampling}), which requires  a modest amount of extra computation.

The two proposed methods can be viewed as evolutions of the RSVD. The distinction, however, is that the RSVD and related randomized methods for low-rank approximation~\cite{halko2011finding,liberty2007randomized,martinsson2011randomized}) work best when the numerical rank $k$ of an input matrix is much smaller than its dimensions. The key advantage of the two new methods is computational efficiency, in particular on GPUs, and they provide high speed for any rank.

\appendix

\section{Results related to random matrices} \label{a:proof}

\begin{thm} \label{th:invert}
Let $\mtx{G} \in \mathbb{R}^{m \times n}$ be a standard Gaussian matrix. Then, with probability 1, $\mtx{G}$ has full rank.
\end{thm}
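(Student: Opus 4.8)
The plan is to reduce the statement to the elementary fact that a proper linear subspace of $\mathbb{R}^m$ is Lebesgue-null, together with the observation that the law of a standard Gaussian vector is absolutely continuous with respect to Lebesgue measure. Without loss of generality I would assume $m \ge n$ (otherwise replace $\mtx{G}$ by $\mtx{G}^*$, which has the same rank and is again a standard Gaussian matrix), so that ``full rank'' means the $n$ columns $\mtx{g}_1,\ldots,\mtx{g}_n \in \mathbb{R}^m$ of $\mtx{G}$ are linearly independent.

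The main argument I would give is an induction on $n$. For $n=1$, the single column $\mtx{g}_1$ is a standard Gaussian vector in $\mathbb{R}^m$, and $\mathbb{P}(\mtx{g}_1 = \mtx{0}) = 0$, so $\mtx{G}$ has rank $1$ almost surely. For the inductive step, write $\mtx{G} = [\,\mtx{G}' \mid \mtx{g}_n\,]$ where $\mtx{G}'$ collects the first $n-1$ columns. These are independent standard Gaussian vectors, so by the inductive hypothesis $\mtx{G}'$ has rank $n-1$ with probability $1$; on that event $W := \mathrm{Col}(\mtx{G}')$ is an $(n-1)$-dimensional, hence proper, subspace of $\mathbb{R}^m$. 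Since $\mtx{g}_n$ is independent of $\mtx{G}'$ and standard Gaussian, I would condition on $\mtx{G}'$ and use that the Gaussian law on $\mathbb{R}^m$ assigns measure zero to any fixed proper subspace (such a subspace is Lebesgue-null, and the Gaussian density is finite). Hence $\mathbb{P}(\mtx{g}_n \in W \mid \mtx{G}') = 0$ on the full-rank event for $\mtx{G}'$, and integrating out gives $\mathbb{P}(\mtx{g}_n \in \mathrm{Col}(\mtx{G}')) = 0$. Therefore $\mtx{g}_1,\ldots,\mtx{g}_n$ are linearly independent with probability $1$.

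A shorter alternative, which I might include instead, is to note that $\mathrm{rank}(\mtx{G}) < n$ exactly when the polynomial $P(\mtx{G}) := \sum_{S} \big(\det \mtx{G}(S,:)\big)^2$ vanishes, where $S$ ranges over the $n$-element subsets of $\{1,\ldots,m\}$. This $P$ is a polynomial in the $mn$ entries of $\mtx{G}$ and is not identically zero (the matrix whose top $n \times n$ block is the identity gives $P = 1$), so its zero set is a Lebesgue-null subset of $\mathbb{R}^{mn}$; as the entries of $\mtx{G}$ are jointly absolutely continuous, this event has probability $0$.

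The only piece with actual content is the claim that the zero set of a nontrivial real polynomial (equivalently, that a proper linear subspace) is Lebesgue-null; this is standard and follows by an easy induction on the number of variables, using that a nonzero univariate polynomial has finitely many roots and then applying Fubini. I would state this as a one-line lemma or simply cite it, and expect no genuine obstacle — everything else is bookkeeping.
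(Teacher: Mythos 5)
Your proof is correct, and it takes a genuinely different — and more self-contained — route than the paper. The paper's argument is very short: it observes that if $\mtx{G}$ (with $m \ge n$) is rank-deficient then the leading $n\times n$ block $\mtx{G}(1:n,1:n)$ must be singular, so
$\Pr[\mtx{G}(1:n,1:n)\text{ singular}] \ge \Pr[\mtx{G}\text{ rank-deficient}]$,
and then simply cites Rudelson--Vershynin (2010), Eq.\ (3.2), for the fact that a square standard Gaussian matrix is singular with probability zero. You instead prove the statement from first principles in two equivalent ways: a column-by-column induction using that a proper subspace of $\mathbb{R}^m$ is Lebesgue-null together with absolute continuity and independence of the Gaussian columns, and a one-shot polynomial argument (zero set of the nonzero polynomial $\sum_S \det(\mtx{G}(S,:))^2$ is null). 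Both of your arguments are valid and complete; the minor thing to note if you want to exactly mirror the paper's structure is the reduction to the leading square block, which neither of your arguments needs — your induction and your polynomial both work directly on the rectangular matrix. What the paper's approach buys is brevity by offloading the real content to a citation; what your approach buys is a short, elementary, self-contained proof that does not rely on an external reference. Either is acceptable; yours is arguably preferable in a paper aiming to be self-contained, and it also makes transparent that the only distributional fact used is absolute continuity (so the result holds for any continuous entry distribution, not just Gaussian).
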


\begin{proof}
Without loss of generality, assume $m \ge n$. It is obvious that
\[
\mbox{Pr [ $\mtx{G}(1:n,1:n)$ is rank-deficient ] } \ge \mbox{Pr [ $\mtx{G}$ is rank-deficient ] }.
\]
According to~\cite[Equation (3.2)]{rudelson2010non}, the probability that $\mtx{G}(1:n,1:n)$ is singular equals zero. Therefore, the theorem holds.
\end{proof}

\begin{thm} \label{th:rank}
Let $\mtx{A} \in \mathbb{R}^{m \times n}$ have rank $k \le \min(m,n)$ and $\mtx{G} \in \mathbb{R}^{n \times \ell}$ be a standard Gaussian matrix. Let $r = \min(k, \ell)$. Then, with probability 1, matrix $\mtx{B} = \mtx{A} \mtx{G}$ has rank $r$ and the first $r$ columns of $\mtx{B}$ are linearly independent.
\end{thm}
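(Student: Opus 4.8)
The plan is to first reduce the statement to a single claim. Since $\mathrm{rank}(\mtx{B}) \le \min(\mathrm{rank}(\mtx{A}),\ell) = \min(k,\ell) = r$ deterministically, and since the first $r$ columns of $\mtx{B}$ being linearly independent already forces $\mathrm{rank}(\mtx{B}) \ge r$ and hence $\mathrm{rank}(\mtx{B}) = r$, it is enough to prove that, with probability $1$, the first $r$ columns of $\mtx{B} = \mtx{A}\mtx{G}$ are linearly independent. So the whole problem collapses to showing linear independence of $\mtx{A}\mtx{g}_1,\dots,\mtx{A}\mtx{g}_r$, where $\mtx{g}_1,\dots,\mtx{g}_\ell$ are the (i.i.d.\ $N(\mtx{0},\mtx{I}_n)$) columns of $\mtx{G}$.

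The main route I would take is to exploit the rotational structure of the Gaussian and reduce to Theorem~\ref{th:invert}. Write the thin SVD $\mtx{A} = \hat{\mtx{U}}\hat{\mtx{\Sigma}}\hat{\mtx{V}}^{*}$ with $\hat{\mtx{U}} \in \mathbb{R}^{m\times k}$ and $\hat{\mtx{V}} \in \mathbb{R}^{n\times k}$ having orthonormal columns and $\hat{\mtx{\Sigma}} \in \mathbb{R}^{k\times k}$ diagonal with strictly positive entries. Then $\mtx{B} = \mtx{A}\mtx{G} = \hat{\mtx{U}}\hat{\mtx{\Sigma}}(\hat{\mtx{V}}^{*}\mtx{G})$. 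The key observation is that $\hat{\mtx{V}}^{*}\mtx{G} \in \mathbb{R}^{k\times\ell}$ is itself a standard Gaussian matrix: its $i$-th column is $\hat{\mtx{V}}^{*}\mtx{g}_i$, which is Gaussian with covariance $\hat{\mtx{V}}^{*}\hat{\mtx{V}} = \mtx{I}_k$, and the columns remain independent. Applying Theorem~\ref{th:invert} to its leading $k\times r$ block (a standard Gaussian matrix with $r \le k$) shows that, with probability $1$, the first $r$ columns of $\hat{\mtx{V}}^{*}\mtx{G}$ are linearly independent. Finally, $\hat{\mtx{U}}\hat{\mtx{\Sigma}}$ is injective on $\mathbb{R}^k$ (product of an invertible diagonal matrix and a matrix with orthonormal columns), so left-multiplication by it preserves linear independence; hence the first $r$ columns of $\mtx{B}$ are a.s.\ linearly independent, which completes the argument.

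The only step requiring genuine care is verifying that $\hat{\mtx{V}}^{*}\mtx{G}$ is \emph{exactly} a standard Gaussian matrix — it is the orthonormality of the columns of $\hat{\mtx{V}}$ that makes the per-column covariance equal to $\mtx{I}_k$, and independence across columns is inherited from $\mtx{G}$; beyond this the proof is bookkeeping. I would also keep in reserve two self-contained alternatives in case a referee prefers not to route through the SVD. The first is an induction on columns: conditionally on $\mtx{g}_1,\dots,\mtx{g}_j$ with $\mtx{A}\mtx{g}_1,\dots,\mtx{A}\mtx{g}_j$ linearly independent and $j < r \le k$, the span of those image vectors is a \emph{proper} subspace of the $k$-dimensional space $\mathrm{Col}(\mtx{A})$, so $\{\mtx{x}\in\mathbb{R}^n : \mtx{A}\mtx{x}\in\mathrm{span}(\mtx{A}\mtx{g}_1,\dots,\mtx{A}\mtx{g}_j)\}$ is a proper linear subspace of $\mathbb{R}^n$, hence Lebesgue-null, and a standard Gaussian vector lands in a fixed null set with probability zero; there are only $r$ steps, so the intersection still has probability $1$. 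The second alternative exhibits a particular $r\times r$ minor of $\mtx{A}\mtx{G}$ as a polynomial in the entries of $\mtx{G}$, uses $\mathrm{rank}(\mtx{A}) \ge r$ to show this polynomial is not identically zero, and concludes that its zero set is Lebesgue-null. In all three approaches the one potential pitfall is the same — making sure the relevant subspace (respectively polynomial) really is proper (respectively nonzero), which is exactly where the hypothesis $\mathrm{rank}(\mtx{A}) = k \ge r$ enters — and the SVD reduction has the advantage of discharging it cleanly via Theorem~\ref{th:invert}.
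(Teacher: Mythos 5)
Your main argument is essentially the paper's own proof: both take the thin SVD $\mtx{A} = \hat{\mtx{U}}\hat{\mtx{\Sigma}}\hat{\mtx{V}}^{*}$, observe that $\hat{\mtx{V}}^{*}\mtx{G}$ is again a standard Gaussian matrix by orthonormality of the columns of $\hat{\mtx{V}}$, and invoke Theorem~\ref{th:invert} on the relevant $k \times r$ block; your additional remark that $\hat{\mtx{U}}\hat{\mtx{\Sigma}}$ is injective is a slight tightening of the paper's ``so it is obvious that $\mtx{B}(:,1:r)$ has full rank.'' The two self-contained alternatives you keep in reserve (conditional induction on columns; nonvanishing of an $r\times r$ minor as a polynomial) are sound but not what the paper does.
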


\begin{proof}

Let the thin SVD of $\mtx{A} \in \mathbb{R}^{m \times n}$ be $\mtx{A} = \mtx{U} \mtx{\Sigma} \mtx{V}^*$, where $\mtx{U} \in \mathbb{R}^{m \times k}$ is an orthonormal matrix, $\mtx{\Sigma} \in \mathbb{R}^{k \times k}$ is a diagonal matrix, and $\mtx{V} \in \mathbb{R}^{k \times n}$ is an orthonormal matrix. Therefore,
\[
\mtx{B}(:, 1:r) = \mtx{U} \mtx{\Sigma} \mtx{V}^* \mtx{G}(:,1:r).
\]
Since $\mtx{V}^* \mtx{G}(:,1:r) \in \mathbb{R}^{k \times r}$ also has the standard Gaussian distribution, it is full rank with probability 1 according to Theorem~\ref{th:invert}. So it is obvious that $\mtx{B}(:, 1:r)$ has full rank.

On the other hand, we know
\[
\mbox{rank}(\mtx{B}) \le \min(\mbox{rank}(\mtx{A}), \mbox{rank}(\mtx{G})) = r.
\]
Therefore, it holds that $\mbox{rank}(\mtx{B}) = r$.
\end{proof}

\begin{corollary} \label{th:rank2}
Let $\mtx{A} \in \mathbb{R}^{m \times n}$ have rank $k \le \min(m,n)$ and $\mtx{G} \in \mathbb{R}^{n \times n}$ be a standard Gaussian matrix. Let $[\mtx{Q}, \mtx{R}] = \textsc{HQR\_full}(\mtx{G})$ ($\mtx{G}$ is invertible with probability 1 according to Theorem~\ref{th:invert}). Then, with probability 1, matrix $\mtx{B} = \mtx{A} \mtx{Q}$ has rank $k$ and the first $k$ columns of $\mtx{B}$ are linearly independent.
\end{corollary}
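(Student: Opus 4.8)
The plan is to reduce everything to Theorem~\ref{th:rank} applied to the first $k$ columns of $\mtx{G}$. First I would dispose of the rank claim: since $\mtx{Q}$ is orthogonal, right-multiplication by $\mtx{Q}$ preserves rank, so $\mbox{rank}(\mtx{B}) = \mbox{rank}(\mtx{A}\mtx{Q}) = \mbox{rank}(\mtx{A}) = k$, and this holds deterministically, with no appeal to randomness.

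The substance is the linear independence of the first $k$ columns. The key step is to express $\mtx{Q}(:,1:k)$ through $\mtx{G}(:,1:k)$. Writing $\mtx{G} = \mtx{Q}\mtx{R}$ and reading off the first $k$ columns, the upper-triangular structure of $\mtx{R}$ gives $\mtx{G}(:,1:k) = \mtx{Q}(:,1:k)\,\mtx{R}(1:k,1:k)$. By Theorem~\ref{th:invert}, $\mtx{G}$ is invertible with probability $1$, and since $\det\mtx{G} = \pm\prod_i \mtx{R}(i,i)$ this forces every diagonal entry of $\mtx{R}$, hence $\mtx{R}(1:k,1:k)$, to be invertible on that same probability-$1$ event. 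Therefore $\mtx{Q}(:,1:k) = \mtx{G}(:,1:k)\,\mtx{R}(1:k,1:k)^{-1}$, and consequently
\[
\mtx{B}(:,1:k) = \mtx{A}\,\mtx{Q}(:,1:k) = \bigl(\mtx{A}\,\mtx{G}(:,1:k)\bigr)\,\mtx{R}(1:k,1:k)^{-1}.
\]
Right-multiplication by an invertible matrix does not change the rank, so it suffices to show that $\mtx{A}\,\mtx{G}(:,1:k)$ has rank $k$ with probability $1$. But $\mtx{G}(:,1:k)$ is an $n\times k$ standard Gaussian matrix, so applying Theorem~\ref{th:rank} with $\ell = k$ (hence $r = \min(k,k) = k$) yields exactly that. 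Intersecting the two probability-$1$ events (invertibility of $\mtx{G}$, full rank of $\mtx{A}\,\mtx{G}(:,1:k)$) completes the argument.

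I do not anticipate any analytic obstacle here; the only points requiring care are purely bookkeeping: verifying that a full Householder QR genuinely produces an upper-triangular $\mtx{R}$ so that the leading-block identity $\mtx{G}(:,1:k) = \mtx{Q}(:,1:k)\,\mtx{R}(1:k,1:k)$ is valid, and noting that the conclusion is stated "with probability $1$", so that combining finitely many almost-sure events is harmless. Everything else is a direct invocation of Theorems~\ref{th:invert} and~\ref{th:rank}.
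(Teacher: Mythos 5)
Your proposal is correct and follows essentially the same route as the paper: both proofs note that $\mathrm{rank}(\mtx{B}) = \mathrm{rank}(\mtx{A}) = k$ deterministically, both exploit the upper-triangular structure of $\mtx{R}$ to obtain the identity $\mtx{A}\mtx{G}(:,1:k) = \mtx{B}(:,1:k)\,\mtx{R}(1:k,1:k)$ (the paper phrases it via $\mtx{C} = \mtx{B}\mtx{R} = \mtx{A}\mtx{G}$, you solve for $\mtx{Q}(:,1:k)$, but this is the same algebra read in opposite directions), both invoke Theorem~\ref{th:rank} to get that $\mtx{A}\mtx{G}(:,1:k)$ has full column rank, and both finish by invertibility of $\mtx{R}(1:k,1:k)$. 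The only cosmetic difference is that you apply Theorem~\ref{th:rank} to the $n\times k$ submatrix $\mtx{G}(:,1:k)$ whereas the paper applies it to the full $n\times n$ matrix $\mtx{G}$ and then reads off the first $k$ columns; these are equivalent uses of the theorem.
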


\begin{proof}
Since $\mtx{Q}$, a unitary matrix, has full rank, we know $\text{rank}(\mtx{B}) = \text{rank}(\mtx{A}) = k$.

Let $\mtx{C} = \mtx{B} \mtx{R} = \mtx{A} \mtx{G}$. We know that
\[
\mtx{C}(:,1:k) = \mtx{B}(:,1:k) \mtx{R}(1:k,1:k)
\]
has full rank according to Theorem~\ref{th:rank}. So it is obvious that $\mtx{B}(:, 1:k)$ has full rank since $\mtx{R}$ is invertible with probability 1.

\end{proof}

\begin{thm} \label{th:rank3}
Given a matrix $\mtx{A} \in \mathbb{R}^{m \times n}$, it holds that, for a positive integer $p$,
\[
\mbox{rank} \left( (\mtx{A}^* \mtx{A})^p \right) = \mbox{rank}( \left( \mtx{A} \mtx{A}^* \right)^p ) = \mbox{rank}( \mtx{A} ),
\]
and, for a non-negative integer $q$,
\[
\mbox{rank} \left( \mtx{A} (\mtx{A}^* \mtx{A})^q \right) = \mbox{rank}( \left( \mtx{A} \mtx{A}^* \right)^q \mtx{A} ) = \mbox{rank} \left( (\mtx{A}^* \mtx{A})^q \mtx{A}^* \right) = \mbox{rank}( \mtx{A}^* \left( \mtx{A} \mtx{A}^* \right)^q ) = \mbox{rank}( \mtx{A} ).
\]
\end{thm}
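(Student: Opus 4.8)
The plan is to reduce every identity to a statement about singular values, using the fact that the rank of a real matrix equals the number of its nonzero singular values. Write the full SVD $\mtx{A} = \mtx{U}\mtx{\Sigma}\mtx{V}^*$ with $\mtx{U} \in \mathbb{R}^{m\times m}$, $\mtx{V}\in\mathbb{R}^{n\times n}$ orthogonal and $\mtx{\Sigma}\in\mathbb{R}^{m\times n}$ rectangular diagonal with diagonal entries $\sigma_1 \ge \cdots \ge \sigma_{\min(m,n)} \ge 0$; let $k = \mbox{rank}(\mtx{A})$ be the number of strictly positive $\sigma_i$. Throughout I will use the elementary fact that multiplying a matrix on the left or right by an orthogonal matrix leaves its rank unchanged.

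First I would handle the even powers. Since $\mtx{V}^*\mtx{V} = \mtx{I}$, we have $(\mtx{A}^*\mtx{A})^p = \mtx{V}(\mtx{\Sigma}^*\mtx{\Sigma})^p\mtx{V}^*$, and $(\mtx{\Sigma}^*\mtx{\Sigma})^p \in \mathbb{R}^{n\times n}$ is diagonal with $(i,i)$ entry $\sigma_i^{2p}$ for $i \le \min(m,n)$ and zero otherwise. For $p \ge 1$ these entries are positive exactly when $\sigma_i > 0$, so the diagonal matrix has exactly $k$ nonzero entries; sandwiching it between $\mtx{V}$ and $\mtx{V}^*$ preserves rank, giving $\mbox{rank}((\mtx{A}^*\mtx{A})^p) = k$. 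The identical computation with the roles of $\mtx{U}$ and $\mtx{V}$ interchanged, namely $\mtx{A}\mtx{A}^* = \mtx{U}(\mtx{\Sigma}\mtx{\Sigma}^*)\mtx{U}^*$, yields $\mbox{rank}((\mtx{A}\mtx{A}^*)^p) = k$.

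Next, the four ``odd'' products. Two of them are transposes of the other two, since $\bigl((\mtx{A}^*\mtx{A})^q\mtx{A}^*\bigr)^* = \mtx{A}(\mtx{A}^*\mtx{A})^q$ and $\bigl((\mtx{A}\mtx{A}^*)^q\mtx{A}\bigr)^* = \mtx{A}^*(\mtx{A}\mtx{A}^*)^q$, and rank is invariant under transposition; so it suffices to treat $\mtx{A}(\mtx{A}^*\mtx{A})^q$ and $(\mtx{A}\mtx{A}^*)^q\mtx{A}$. For the former, $\mtx{A}(\mtx{A}^*\mtx{A})^q = \mtx{U}\mtx{\Sigma}\mtx{V}^*\,\mtx{V}(\mtx{\Sigma}^*\mtx{\Sigma})^q\mtx{V}^* = \mtx{U}\bigl(\mtx{\Sigma}(\mtx{\Sigma}^*\mtx{\Sigma})^q\bigr)\mtx{V}^*$, where $\mtx{\Sigma}(\mtx{\Sigma}^*\mtx{\Sigma})^q \in \mathbb{R}^{m\times n}$ is rectangular diagonal with $(i,i)$ entry $\sigma_i^{2q+1}$, positive exactly when $\sigma_i > 0$; hence its rank, and therefore that of $\mtx{A}(\mtx{A}^*\mtx{A})^q$, equals $k$. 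The case $(\mtx{A}\mtx{A}^*)^q\mtx{A} = \mtx{U}\bigl((\mtx{\Sigma}\mtx{\Sigma}^*)^q\mtx{\Sigma}\bigr)\mtx{V}^*$ is the same, with $(\mtx{\Sigma}\mtx{\Sigma}^*)^q\mtx{\Sigma}$ again rectangular diagonal with entries $\sigma_i^{2q+1}$. Together with $\mbox{rank}(\mtx{A}^*) = \mbox{rank}(\mtx{A}) = k$ (which also covers the $q=0$ instances), this closes the chain of equalities.

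I do not anticipate a genuine obstacle: the only thing needing care is the bookkeeping for the rectangular factor $\mtx{\Sigma}$, and the observation that the exponent on $\sigma_i$ is $2p$ in the symmetric case and $2q+1$ in the mixed case — in both situations the exponent is a positive integer, so the ``$(i,i)$ entry nonzero $\iff \sigma_i > 0$'' equivalence holds and the nonzero diagonal entries number exactly $k$. If one prefers to avoid the SVD, the same conclusions follow by induction from the elementary identity $\mbox{null}(\mtx{A}^*\mtx{A}) = \mbox{null}(\mtx{A})$ (valid over $\mathbb{R}$, since $\mtx{A}^*\mtx{A}\mtx{x} = \mtx{0} \Rightarrow \|\mtx{A}\mtx{x}\|^2 = \mtx{x}^*\mtx{A}^*\mtx{A}\mtx{x} = 0 \Rightarrow \mtx{A}\mtx{x} = \mtx{0}$) combined with $\mbox{Col}(\mtx{A}\mtx{A}^*) = \mbox{Col}(\mtx{A})$, but the SVD route is the most uniform across all six expressions.
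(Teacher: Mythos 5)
Your proof is correct and takes essentially the same route as the paper: both express the products via the SVD, simplify the powers to a diagonal factor with entries $\sigma_i^{2p}$ or $\sigma_i^{2q+1}$, and read off the rank. The only cosmetic difference is that you use the full SVD (with square orthogonal $\mtx{U},\mtx{V}$ and a rectangular $\mtx{\Sigma}$) while the paper uses the thin SVD restricted to the $k$ positive singular values; the argument is otherwise identical.
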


\begin{proof}
Suppose $\mbox{rank}( \mtx{A} ) = k$. Let the thin SVD  of $\mtx{A}$ be $\mtx{A} = \mtx{U} \mtx{\Sigma} \mtx{V}^*$, where $\mtx{U} \in \mathbb{R}^{m \times k}$ and $\mtx{V} \in \mathbb{R}^{k \times n}$ are orthonormal matrices, and $\mtx{\Sigma} \in \mathbb{R}^{k \times k}$ is a diagonal matrix with the positive singular values. We know that
\[
(\mtx{A}^* \mtx{A})^p  = \mtx{V} \mtx{\Sigma}^{2p} \mtx{V}^* \text{ and }
(\mtx{A} \mtx{A}^*)^p  = \mtx{U} \mtx{\Sigma}^{2p} \mtx{U}^*,
\]
and
\[
\mtx{A} (\mtx{A}^* \mtx{A})^q  =  (\mtx{A} \mtx{A}^*)^q \mtx{A}  = \mtx{U} \mtx{\Sigma}^{2q+1} \mtx{V}^* \text{ and }
(\mtx{A}^* \mtx{A})^q \mtx{A}^* = \mtx{A}^* (\mtx{A} \mtx{A}^*)^q = \mtx{V} \mtx{\Sigma}^{2q+1} \mtx{U}^*,
\]
So it is obvious that the theorem holds.
\end{proof}

\section{\textsc{randUTV} algorithm adapted from \cite{2015_martinsson_blocked}.} \label{a:utv}

\begin{algorithm}
\begin{algorithmic}[1]
 \REQUIRE {matrix $\mtx{A} \in \mathbb{R}^{m \times n}$, a positive integer $b$, and a non-negative integer $q$.}
 \ENSURE {$\mtx{A} = \mtx{U} \mtx{T} \mtx{V}^*$, where $\mtx{U} \in \mathbb{R}^{m \times m}$  and $\mtx{V} \in \mathbb{R}^{n \times n}$ are orthogonal, and $\mtx{T} \in \mathbb{R}^{m \times n}$ is upper trapezoidal.}
\STATE $\mtx{T} = \mtx{A}; \mtx{U} = \mtx{I}_m; \mtx{V} = \mtx{I}_n;$
\FOR{$i=1$: min$(\lceil m/b \rceil, \lceil n/b \rceil)$}
  \STATE $I_1 = 1:(b(i-1)); I_2 = (b(i-1)+1):\min(bi,m); I_3 = (bi+1):m;$
  \STATE $J_1 = 1:(b(i-1)); J_2 = (b(i-1)+1):\min(bi,n); J_3 = (bi+1):n;$
  \IF{($I_3$ and $J_3$ are both nonempty)}
    \STATE {$\mtx{G} = \textsc{randn}(m - b(i-1),b)$}
    \STATE $\mtx{Y} = \mtx{T}([I_2,I_3],[J_2,J_3])^* \mtx{G}$
    \FOR{$j=1:q$}
    \STATE $\mtx{Y} = \mtx{T}([I_2,I_3],[J_2,J_3])^*(\mtx{T}([I_2,I_3],[J_2,J_3])\mtx{Y})$
    \ENDFOR
    \STATE $[\mtx{V}^{\si},\sim] = \textsc{HQR\_full}(\mtx{Y})$
    \STATE $\mtx{T}(:,[J_2,J_3]) = \mtx{T}(:,[J_2,J_3]) \mtx{V}^{\si}$
    \STATE $\mtx{V}(:,[J_2,J_3]) = \mtx{V}(:,[J_2,J_3]) \mtx{V}^{\si}$
    \STATE
    \STATE $[\mtx{U}^{\si},\mtx{R}] = \textsc{HQR\_full}(\mtx{T}([I_2,I_3],J_2))$
    \STATE $\mtx{U}(:,[I_2,I_3]) = \mtx{U}(:,[I_2,I_3]) \mtx{U}^{\si}$
    \STATE $\mtx{T}([I_2,I_3],J_3) = \left(\mtx{U}^{\si}\right)^* \mtx{T}([I_2,I_3],J_3)$
    \STATE $\mtx{T}(I_3,J_2) = \textsc{zeros}(m-bi,b)$
    \STATE
    \STATE $[\mtx{U}_{\text{small}},\mtx{D}_{\text{small}},\mtx{V}_{\text{small}}] = \textsc{svd}(\mtx{R}(1:b,1:b))$
    \STATE $\mtx{U}(:,I_2) = \mtx{U}(:,I_2)  \mtx{U}_{\text{small}}$
    \STATE $\mtx{V}(:,J_2) = \mtx{V}(:,J_2) \mtx{V}_{\text{small}}$
    \STATE $\mtx{T}(I_2,J_2) = \mtx{D}_{\text{small}}$
    \STATE $\mtx{T}(I_2,J_3) = \mtx{U}_{\text{small}}^* \mtx{T}(I_2,J_3)$
    \STATE $\mtx{T}(I_1,J_2) = \mtx{T}(I_1,J_2) \mtx{V}_{\text{small}}$
  \ELSE
    \STATE $[\mtx{U}_{\text{small}},\mtx{D}_{\text{small}},\mtx{V}_{\text{small}}] = \textsc{svd}(\mtx{T}([I_2,I_3],[J_2,J_3]))$
    \STATE $\mtx{U}(:,[I_2,I_3]) = \mtx{U}(:,[I_2,I_3]) \mtx{U}_{\text{small}}$
    \STATE $\mtx{V}(:,[J_2,J_3]) = \mtx{V}(:,[J_2,J_3]) \mtx{V}_{\text{small}}$
    \STATE $\mtx{T}([I_2,I_3],[J_2,J_3]) = \mtx{D}_{\text{small}}$
    \STATE $\mtx{T}([I_1,[J_2,J_3]) = \mtx{T}(I_1,[J_2,J_3]) \mtx{V}_{\text{small}}$
  \ENDIF
\ENDFOR
\end{algorithmic}
\caption{[\mtx{U},\mtx{T},\mtx{V}] = \textsc{randUTV\_basic}(\mtx{A},b,q)}
\label{alg:randutv-basic}
\end{algorithm}

\section{Raw data for Figure \ref{fig:ch5-powurv-times}, \ref{fig:ch5-randutv-basic}, and \ref{fig:ch5-randutv-boosted}} \label{a:data}

\begin{table}
    \centering
    \caption{Timing results (in seconds) for computing rank-revealing factorizations on a GPU using the SVD, the \textsc{HQRCP}, the \textsc{powerURV}, and the \textsc{randUTV}. The SVD is calculated using the MAGMA routine \textsc{magma\_dgesdd}. The \textsc{HQRCP} is calculated using the MAGMA routine \textsc{magma\_dgeqp3}, and  the orthogonal matrix $\mtx{Q}$ is calculated using the MAGMA routine \textsc{magma\_dorgqr2}. The \textsc{powerURV} and the \textsc{randUTV} are given in Algorithm~\ref{alg:powerURV-stable} and~\ref{alg:randutv-boosted}, respectively.}
    \label{t:raw}
    \begin{tabular}{cccccc}
    \toprule
    \multirow{2}{*}{$N$} & \multirow{2}{*}{SVD} & \multirow{2}{*}{\textsc{HQRCP}} & \multicolumn{3}{c}{\textsc{powerURV}}\\
      &  &  & $q=1$ & $q=2$ & $q=3$   \\
    \midrule
     3,000 & 1.98e+00 & 9.26e-01 & 5.06e-01 & 6.59e-01 & 9.38e-01  \\
     4,000 & 4.51e+00 & 1.58e+00 & 7.80e-01 & 9.79e-01 & 1.42e+00    \\
     5,000 & 5.97e+00 & 2.52e+00 & 1.13e+00 & 1.46e+00 & 1.92e+00   \\
     6,000 & 9.29e+00 & 3.66e+00 & 1.60e+00 & 2.14e+00 & 2.76e+00   \\
     8,000 & 1.85e+01 & 7.42e+00 & 2.68e+00 & 3.87e+00 & 5.29e+00  \\
     10,000 & 3.12e+01 & 1.11e+01 & 4.85e+00 & 7.29e+00 & 9.37e+00    \\
     12,000 & 4.74e+01 & 1.70e+01 & 7.62e+00 & 1.10e+01 & 1.50e+01   \\
     15,000 & 7.94e+01 & 2.84e+01 & 1.29e+01 & 1.94e+01 & 2.51e+01  \\
     20,000 & 1.71e+02 & 5.61e+01 & 2.85e+01 & 4.27e+01 & 6.03e+01 \\
     30,000 & N/A & 1.60e+02 & 9.12e+01 & 1.32e+02 & 1.87e+02    \\
    \bottomrule
    \end{tabular}
    \begin{tabular}{ccccccc}
    \toprule
    \multirow{2}{*}{$N$} & \multicolumn{3}{c}{\textsc{randUTV} ($p=0,b=128$)} & \multicolumn{3}{c}{\textsc{randUTV}  ($p=b=128$)} \\
      & $q=0$ & $q=1$ & $q=2$ & $q=0$ & $q=1$ & $q=2$   \\
    \midrule
     3,000 & 7.20e-01 & 7.43e-01 & 7.34e-01 & 1.45e+00 & 1.72e+00 & 1.72e+00  \\
     4,000 & 1.14e+00 & 1.16e+00 & 1.23e+00 & 2.19e+00 & 2.56e+00 & 2.65e+00  \\
     5,000 & 1.83e+00 & 1.76e+00 & 1.84e+00 & 3.29e+00 & 3.72e+00 & 3.72e+00  \\
     6,000 & 2.49e+00 & 2.61e+00 & 2.53e+00 & 4.31e+00 & 4.95e+00 & 4.69e+00  \\
     8,000 & 4.17e+00 & 4.28e+00 & 4.52e+00 & 7.50e+00 & 7.93e+00 &  8.19e+00 \\
     10,000 & 6.82e+00 & 7.03e+00 & 6.94e+00 & 1.05e+01 & 1.22e+01 & 1.25e+01   \\
     12,000 & 9.89e+00 & 1.01e+01 & 1.09e+01 & 1.52e+01 & 1.72e+01 & 1.79e+01  \\
     15,000 & 1.62e+01 & 1.70e+01 & 1.78e+01 & 2.43e+01 & 2.73e+01 & 2.82e+01  \\
     20,000 & 3.03e+01 & 3.27e+01 & 3.43e+01 & 4.39e+01 & 5.10e+01 & 5.36e+01   \\
     30,000 & 9.25e+01 & 9.63e+01 & 1.04e+02 & 1.27e+02 & 1.50e+02  & 1.55e+02  \\
    \bottomrule
    \end{tabular}
\end{table}

\bibliography{main_bib}
\bibliographystyle{amsplain}

\end{document}